\documentclass[11pt]{article}
\usepackage[utf8]{inputenc}
\usepackage{color}
\usepackage{xcolor}
\usepackage{ulem}
\usepackage{lipsum}
\usepackage{authblk}

\usepackage{amsmath,amsfonts,euscript,amssymb,amsthm,a4,times}

\allowdisplaybreaks

\def\XXint#1#2#3{{\setbox0=\hbox{$#1{#2#3}{\int}$}
\vcenter{\hbox{$#2#3$}}\kern-.5\wd0}}

\newtheorem{theorem}{Theorem}[section]

\theoremstyle{definition}
\newtheorem{definition}[theorem]{Definition}

\makeatletter \catcode`@=11
\newbox\tr@tto
\setbox\tr@tto=\hbox{{\count0=0\dimen0=-,9pt\dimen1=1,1pt\loop\ifnum
    \count0<11 \advance \count0 by1 \vrule width.51pt height\dimen1
    depth\dimen0\kern-0.17pt\advance\dimen0 by-0.05pt\advance\dimen1
    by0.1pt\repeat \loop\ifnum\count0<21\advance \count0 by1 \vrule
    width.6pt height\dimen1 depth\dimen0\kern-0.2pt \advance\dimen0
    by-0.1pt\advance\dimen1 by 0.05pt\repeat}}
\def\medint{\displaystyle\copy\tr@tto\kern-10.4pt\int}
\numberwithin{equation}{section}

\def\proofof#1{\begin{proof}[Proof of #1]}

\def\dx{{\mathrm d}x}

\def\loc{\rm loc}

\def\proofof#1{\begin{proof}[Proof of #1]}

\def\dx{{\mathrm d}x}

\def\loc{\rm loc}

\catcode`@=11
\newbox\tr@tto
\setbox\tr@tto=\hbox{{\count0=0\dimen0=-,9pt\dimen1=1,1pt\loop\ifnum\count0<11 \advance \count0 by1 \vrule width.51pt height\dimen1
     depth\dimen0\kern-0.17pt\advance\dimen0 by-0.05pt\advance\dimen1
     by0.1pt\repeat \loop\ifnum\count0<21\advance \count0 by1 \vrule
     width.6pt height\dimen1 depth\dimen0\kern-0.2pt \advance\dimen0
     by-0.1pt\advance\dimen1 by 0.05pt\repeat}}
\def\medint{\displaystyle\copy\tr@tto\kern-10.4pt\int}
\catcode`@=12

\numberwithin{equation}{section}

\theoremstyle{plain} 
\newtheorem{thm}{Theorem}[section] 
 
\newtheorem{lem}[thm]{Lemma} 
\newtheorem{prop}[thm]{Proposition} 
 
\newtheorem{dfn}[thm]{Definition}

\usepackage{xcolor}                    
\definecolor{custom-blue}{RGB}{0,99,166} 
\usepackage{hyperref}
\hypersetup{colorlinks=true, allcolors=custom-blue}

\numberwithin{equation}{section}

\date{\today}

\begin{document}
\title{ Regularity for Minimizers of non Autonomous Singular Functionals with Anisotropic Growth }
\author{\sc{Albert Clop},  \sc{Antonia Passarelli di Napoli} $\&$ \sc{Stefania Russo}}

\newcommand{\Addresses}{{
  \bigskip
  \footnotesize
\noindent{\sc Albert Clop} , Departament de Matematiques i Informatica  \\ Universitat de Barcelona, Carrer d'Aribau, 2, Eixample, 08011 Barcelona, Espanya\par\nopagebreak
\noindent\texttt{albert.clop@ub.edu}

\medskip

\noindent {\sc Antonia Passarelli di Napoli}, Dipartimento di Matematica e Applicazioni  "R.
Caccioppoli"\\ Universit\`{a} di Napoli ``Federico
II", via Cintia - 80126 Napoli, Italy\par\nopagebreak
\noindent\texttt{antpassa@unina.it}

\medskip

\noindent {\sc Stefania Russo}, Dipartimento di Matematica e Applicazioni "R.
Caccioppoli" \\ Universit\`{a} di Napoli ``Federico
II", via Cintia - 80126 Napoli, Italy\par\nopagebreak
\noindent\texttt{stefania.russo3@unina.it}
}}

\maketitle

\bigskip

\begin{abstract}
We establish the higher differentiability  of the local minimizers to a class of non autonomous convex integral functionals satisfying anisotropic subquadratic growth conditions, that include, as a particular case, those with orthotropic structure. 
The result is obtained under a gap bound on the exponents \(p_i\), that guarantees the local boundedness of the minimizers and under a suitable Sobolev assumption on the  map that measures the oscillation of the energy density with respect to the $x$ variable, that is independent on the dimension.
\end{abstract}
\maketitle
\medskip
\noindent \textbf{Keywords:} Anisotropic growth,  Sobolev
 regularity, Higher differentiability 
\medskip 
\\
\medskip
\noindent \textbf{MSC 2020:} 35J70, 35B65, 49K20.
\bigskip

\section{Introduction}
In this paper we investigate the higher differentiability properties of the local minimizers of convex integral functionals of the form
\begin{equation}\label{functional}
\mathcal{F}(u,\Omega):=\int_\Omega f(x,Du)\,\dx,
\end{equation}
where $\Omega\subset\mathbb{R}^n$ is a bounded open set, $n\ge 2$, $u:\Omega\to\mathbb{R}$, and the integrand $f:\Omega\times\mathbb{R}^n\to[0,+\infty)$ 
is assumed to be strictly convex and of class $C^1$ with respect to the variable $\xi$ and satisfies the following anisotropic growth conditions 
\begin{equation}\label{crescitapi}
  c_1 \sum_{i=1}^{n} (\mu^2+|\xi_i|^2)^{\frac{p_i}{2}}
\leq
f(x,\xi)
\leq
c_2\left(1+\sum_{i=1}^{n} |\xi_i|^{p_i}\right)
\end{equation}
 for some exponents $p_i$, with 
 \begin{equation}\label{pi}
1<p_1\le\dots\le p_n\le 2.
\end{equation}
More precisely, we assume that there exists a function $\tilde f:\Omega\times\mathbb{R}^n\to[0,+\infty)$ such that
\begin{equation}\label{A0}f(x,\xi)=\tilde f(x, |\xi_1|,\dots,|\xi_n|)\tag{A0}\end{equation}  and that there exists $\bar{\mu}>1$ such that
\begin{equation}\label{A1}
f(x,\lambda \xi) \leq \lambda^{\bar{\mu}} f(x,\xi),\qquad\text{for every   }\lambda>1.
\tag{A1}
\end{equation}
Moreover, there exist positive constants $\ell,L$ such that
\begin{equation}\label{A2}
\langle D_\xi f(x,\xi)-D_\xi f(x,\eta),\xi-\eta\rangle
\ge \ell\sum_{i=1}^n(\mu^2+|\xi_i|^2+|\eta_i|^2)^{\frac{p_i-2}{2}}|\xi_i-\eta_i|^2,
\tag{A2}
\end{equation}
\begin{equation}\label{A3}
|D_\xi f(x,\xi)-D_\xi f(x,\eta)|
\le L\sum_{i=1}^n(\mu^2+|\xi_i|^2+|\eta_i|^2)^{\frac{p_i-2}{2}}|\xi_i-\eta_i|,
\tag{A3}
\end{equation}
for a.e. $x\in\Omega$ and all $\xi,\eta\in\mathbb{R}^n$. For the sake of simplicity,  we assume
\[
D_\xi f(x,0)=0
\qquad\text{for a.e. }x\in\Omega,
\]
that, together with (A3), implies
\begin{equation}\label{A3'}
|D_\xi f(x,\xi)|
\le L\sum_{i=1}^n(\mu^2+|\xi_i|^2)^{\frac{p_i-1}{2}}.
\tag{A3'}
\end{equation}
However, one can easily check that our results still hold under the weaker assumption $D_\xi f(x,0)=\mathfrak{h}(x)\in L^m_{\mathrm{loc}}(\Omega)$, 
 for a suitable integrability exponent \(m>1\). 
\\
A prototypical example of the class of energies considered in this paper is given by the orthotropic integrand
\begin{equation}\label{esplicita}
f(x,\xi)=\sum_{i=1}^n a_i(x)\bigl(\mu^2+|\xi_i|^2\bigr)^{\frac{p_i}{2}},
\end{equation}
where $\mu\in[0,1]$ is fixed, and the coefficients
$a_i:\Omega\to\mathbb{R}$ are  measurable and essentially bounded functions fulfilling the uniform ellipticity condition
\begin{equation}\label{ellipticity}
0<\lambda\le a_i(x)\le\Lambda
\qquad\text{for a.e. }x\in\Omega,\quad i=1,\dots,n.
\end{equation}
The regularity of the local minimizers of anisotropic functionals has been widely investigated in both the homogeneous (\cite{6, 5, LemmaBrasco, Carozza, 17, feopass,  russo})  and non-homogeneous cases (\cite{BraTau,uLimitato,russogri}) as well as that of the solutions of degenerate anisotropic parabolic equations (see  \cite{Pasq,PasqCiani}).\\
Actually,   functionals as in \eqref{functional} under the growth assumption  \eqref{crescitapi},  fit into the wider class  of nonstandard growth problems. Indeed, setting
\[
p:=\min_i p_i,
\qquad
q:=\max_i p_i,
\]
the integrand satisfies the so-called  $ (p,q)$–growth condition, that can be expressed as follows
$$|\xi|^p -c_0\leq f(x,\xi) \leq c_1(1 + |\xi|^2)^{\frac{q}{2}}. $$ 
Since the pioneering works of Marcellini   (\cite{MarcelliniEs1, MarcelliniEs2}), it has been known that the ratio $q/p$ plays a decisive role in the regularity theory: if the ratio  is too large, local minimizers may fail to be bounded, as proven by the counterexamples in \cite{Giaquinta, MarcelliniEs3}. \\
In the non-autonomous setting the situation is even more delicate due to the possible occurrence of the Lavrentiev phenomenon (\cite{BDS, FDFL, Koch}), and admissible bounds on the ratio $q/p$  could also depend also on the degree of regularity  of the coefficients.

Over the years, the literature on minimizers of functionals with non-standard growth conditions has grown considerably, making it impossible to provide an exhaustive list of references  (see, for example   \cite{CKP1, CKP2}, \cite{12}---\cite{Ele3}, \cite{16, M2022, MascoloPdN}) and  the recent surveys (\cite{surveyMar1,surveyMing}).\\
Although our functional shares features with the $(p,q)$–growth framework, its orthotropic structure requires a refined analysis, since each component of the gradient has its own growth exponent.
As pointed out in \cite{LemmaBrasco}, the functional combines two different features: $(p,q)$-growth and orthotropic structure. This interaction gives rise to a particularly challenging class of variational problems. Actually, the classical $(p,q)$-growth setting does not provide sufficient information to develop a regularity theory that fully exploits the orthotropic structure. In particular, taking into account the orthotropic structure allows us to obtain stronger results than those available by considering only the $(p,q)$-growth behavior.
Indeed, in the $(p,q)$ framework  typically 
the regularity results are formulated in terms of the whole gradient and only involve the lowest exponent. In contrast, in the orthotropic setting one proves that each component of the gradient 
gains   a regularity  related to the corresponding growth exponent, that is obviously  better.  

\noindent Given the anisotropic structure of the integrand, it is natural to work in the Sobolev space
\[
W^{1,\mathbf{p}}_{\mathrm{loc}}(\Omega)
=
\left\{u\in W^{1,1}_{\mathrm{loc}}(\Omega): u_{x_i}\in L^{p_i}_{\mathrm{loc}}(\Omega),\ i=1,\dots,n\right\},
\]
where $\mathbf{p}=(p_1,\dots,p_n)$. Let us recall the following

\begin{definition}
A function $u\in W^{1,\mathbf{p}}_{\mathrm{loc}}(\Omega)$ is a local minimizer of \eqref{functional} if for every open set $\widetilde\Omega\Subset\Omega$ one has
\[
\mathcal{F}(u;\widetilde\Omega)\le \mathcal{F}(\varphi;\widetilde\Omega)
\]
for all $\varphi\in u+W^{1,\mathbf{p}}_0(\widetilde\Omega)$.
\end{definition}
The regularity theory for local minimizers of autonomous orthotropic functionals, i.e. with energy densities of the form \eqref{esplicita} with $a_i\equiv1$, has been recently object of study, 
in the subquadratic  (see \cite{Brascop12})  and superquadratic (see \cite{BraTau}) regimes.   In particular, if
\[
u\in W^{1,\mathbf{p}}_{\mathrm{loc}}(\Omega)\cap L^\infty_{\mathrm{loc}}(\Omega)
\]
is a local minimizer of the corresponding autonomous functional,
and one further assumes that $$1\le p_1\le\dots\le p_n\le 2$$
then in \cite{Brascop12} it has been proven  that
\begin{equation}\label{diffproperties}
|u_{x_i}|^{\frac{p_i-2}{2}}u_{x_i}\in W^{1,2}_{\mathrm{loc}}(\Omega),
\qquad i=1,\dots,n.
\end{equation}
Our main result extends  the above conclusion to minimizers of anisotropic integral functionals  \eqref{functional},  not necessarily autonomous, as already done in \cite{russo} in the superquadratic growth case.
\\
It is well known that, for non-autonomous functionals, the higher differentiability of local minimizers cannot in general be expected unless some regularity with respect to the $x$–variable is imposed on the integrand. In both the standard and nonstandard growth frameworks, Sobolev regularity of the coefficients provides a natural condition ensuring higher differentiability of integer order.
The available results make it clear that the regularity of the coefficients needs to be related to the dimension $n$, in the sense that the partial map $x \to D_{\xi}f(x, \xi)$ is usually assumed to   belong to a Sobolev space  strictly contained in $W^{1,n}$ even under  standard growth conditions (see for example \cite{CGPdN, CGHPdN20, CGGP, CMMP, Ming2, Ele1, Ele2, Ele4, M2022, 40}.
When the minimizers are assumed to be essentially  bounded, however, the situation changes, and sufficient assumptions on the Sobolev regularity of the coefficients can be imposed independently of the dimension (see \cite{Adi, Ele3, 25, Hasto}).

\noindent Accordingly, concerning the dependence on the $x$-variable, we assume that for every $ i=1,\dots,n$
there exists a function $g \in L^r_{\textrm{loc}}(\Omega) $ for some $ r>1$  such that
\begin{equation}\label{A4}
|D_\xi f(x,\xi)-D_\xi f(y,\xi)|
\le |x-y|\,(g(x)+g(y))
\sum_{i=1}^n(\mu^2+|\xi_i|^2)^{\frac{p_i-1}{2}},
\tag{A4}
\end{equation}
for a.e. $x,y\in\Omega$ and all $\xi\in\mathbb{R}^n$.\\
Under this assumption, also in the orthotropic subquadratic growth case, we are able to prove a higher differentiability result for the local minimizers of \eqref{functional}, that as usual,  need to be expressed through the  auxiliary function $V_{p}(\xi)$  defined by
\begin{equation}\label{DefVp}
    V_{p}(\xi):= V_{p,0}(\xi); \qquad 
    V_{p,\mu}(\xi):= \left( \mu^2 + |\xi|^2 \right)^{\frac{p-2}{4}} \xi,
    \end{equation}
for all $\xi\in \mathbb{R}^{n}$ and with $ \mu  \in [0,1]$. 
\noindent 
More precisely, we are going to prove the following
\begin{theorem}\label{thmPrincipale}
Let $u\in W^{1,\mathbf{p}}_{\mathrm{loc}}(\Omega)$ be a local minimizer of \eqref{functional} under assumptions \eqref{A0}--\eqref{A4} and \eqref{crescitapi}--\eqref{pi}. Assume moreover that
$$p_n \leq \bar{p}^*$$
where $ \bar{p}^*$ is the anisotropic Sobolev exponent of $ \mathbf{p},$ defined at \eqref{definizionePi},
and that
\[
g\in L^r_{\mathrm{loc}}(\Omega)
\qquad\text{with}\qquad
r>p_n+2.
\]
Then
\[
V_{p_i}(u_{x_i})\in W^{1,2}_{\mathrm{loc}}(\Omega),
\qquad i=1,\dots,n,\qquad \text{and}\qquad  u \in  W^{1,\mathbf{p}+2}_{\mathrm{loc}} (\Omega). 
\]
Moreover , for every pair of concentric balls $B_{R/4}\subset B_R\Subset\Omega$, the estimates
\[
\sum_{i=1}^n\int_{B_{R/4}}
(\mu^2+|u_{x_i}|^2)^{\frac{p_i-2}{2}}
|u_{x_ix_j}|^2\,\dx
\le
c\Bigl(
\sum_{i=1}^n\|u_{x_i}\|_{L^{p_i}(B_R)}
+\|g\|_{L^r(B_R)}
\Bigr)^\sigma
\]
and 
\[
\sum_{i=1}^n\int_{B_{R/4}}
(\mu^2+|u_{x_i}|^2)^{\frac{p_i+2}{2}}\,\dx
\le
c\Bigl(
\sum_{i=1}^n\|u_{x_i}\|_{L^{p_i}(B_R)}
+\|g\|_{L^r(B_R)}
\Bigr)^\sigma
\]
hold, where $c>0$ and $\sigma>0$ depend only on the structural data.
\end{theorem}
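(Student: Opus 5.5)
We will follow the now standard scheme for non autonomous problems with bounded minimizers: a priori estimates for regular minimizers of approximating functionals, followed by a passage to the limit. The steps are these.

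\textbf{Step 1: local boundedness.} Under the gap condition $p_n\le\bar p^*$, the known boundedness results for anisotropic functionals (in the spirit of \cite{uLimitato}) give $u\in L^\infty_{\loc}(\Omega)$. The $L^\infty$ norm on $B_{R/2}$ is controlled by a power of $\sum_i\|u_{x_i}\|_{L^{p_i}(B_R)}$, after normalizing $\|u\|_{L^1}$ through Sobolev inequalities. This bound is the only place where $p_n\le\bar p^*$ enters.

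\textbf{Step 2: approximation.} We mollify in $x$, setting $f_\varepsilon=f*\rho_\varepsilon$ and $g_\varepsilon=g*\rho_\varepsilon$, and add a small uniformly elliptic quadratic term so that we work with $\mu>0$. We then take $u_\varepsilon$ to be the minimizer with boundary datum $u_\delta$, where $u_\delta$ is a mollification of $u$ on $B_R$. Since $u_\varepsilon$ lies in $W^{2,2}_{\loc}$, it suffices to prove the two estimates for $u_\varepsilon$ with constants independent of $\varepsilon$. Structure (A0) and the maximum principle give $\|u_\varepsilon\|_\infty\le\|u\|_{L^\infty(B_{R/2})}$.

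\textbf{Step 3: second order Caccioppoli inequality.} We differentiate the Euler equation in direction $x_j$ and test with $\eta^2 u_{x_j}$. Assumptions (A2)--(A3) give on the left
\[
\sum_i\int\eta^2(\mu^2+|u_{x_i}|^2)^{\frac{p_i-2}{2}}|u_{x_ix_j}|^2 .
\]
By (A4), the $x$-dependence produces terms of the form
\[
\int\eta^2 g\,(\mu^2+|u_{x_i}|^2)^{\frac{p_i-1}{2}}|u_{x_ix_j}|
\quad\text{and}\quad
\int \eta|D\eta|\, g\,(\mu^2+|u_{x_i}|^2)^{\frac{p_i-1}{2}}|u_{x_j}| .
\]
We absorb the first term by Young's inequality, leaving $\int\eta^2g^2(\mu^2+|u_{x_i}|^2)^{p_i/2}$. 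Working on pure second derivatives is what yields the componentwise weights, as in \cite{Brascop12}.

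\textbf{Step 4: higher integrability of each component.} This is the crucial estimate. We use the interpolation obtained by integrating by parts against the bounded function $u$:
\[
\int\eta^{2}(\mu^2+|u_{x_i}|^2)^{\frac{p_i+2}{2}}
\le c\|u\|_\infty^2\int\eta^{2}(\mu^2+|u_{x_i}|^2)^{\frac{p_i-2}{2}}|u_{x_ix_i}|^2+\text{lower order}.
\]
This holds for each $i$ separately. Integrating by parts in $x_i$ only is what lets us use only the $i$-th second derivative, so the estimate is compatible with the anisotropic weights. We bound the $g$ term by
\[
\int\eta^2g^2(\mu^2+|u_{x_i}|^2)^{\frac{p_i}{2}}
\le\|g\|_{L^r}^2\Bigl(\int\eta^{\cdot}(\mu^2+|u_{x_i}|^2)^{\frac{p_i r}{2(r-2)}}\Bigr)^{\frac{r-2}{r}} .
\]
The condition $r>p_n+2$ gives $\frac{p_ir}{r-2}<p_i+2$. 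Hence Young's inequality with a small parameter, applied with the interpolation above, lets us absorb this term into the left side. The price is a large power of $\|g\|_{L^r}$ and of $\|u\|_\infty$, and this is the origin of the exponent $\sigma$.

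\textbf{Step 5: conclusion.} The absorption above requires a hole-filling or iteration lemma on nested balls $B_{R/4}\subset B_s\subset B_t\subset B_{R/2}$, because the cutoff powers must match. Combining Steps 3 and 4 and using Step 1 gives both uniform estimates. We then pass to the limit: weak compactness in $W^{1,2}$ of $V_{p_i}(u_{\varepsilon,x_i})$, together with strict convexity and the convergence of energies, shows that $u_\varepsilon\to u$, and lower semicontinuity transfers the estimates to $u$.

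We expect Step 4 to be the main obstacle. The difficulty is making the interpolation work for each component separately, with the cutoff exponents compatible, while the mixed derivatives $u_{x_ix_j}$ appear only with the weight of $u_{x_i}$.
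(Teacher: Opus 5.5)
There is a genuine gap in Step 3, and it sits exactly where the anisotropic subquadratic structure matters. When you differentiate the Euler equation in $x_j$ and test with $\eta^2u_{x_j}$, you get the two coefficient terms you list, but also the cutoff term
\[
2\int \eta\,u_{x_j}\,\bigl\langle D^2_\xi f(x,Du)\,Du_{x_j},D\eta\bigr\rangle\,\mathrm{d}x .
\]
By \eqref{A3}, this term is only bounded by $\sum_i\int\eta|D\eta|\,|u_{x_j}|\,(\mu^2+|u_{x_i}|^2)^{\frac{p_i-2}{2}}|u_{x_ix_j}|\,\mathrm{d}x$. Nothing better is available: for the model \eqref{esplicita} the Hessian is diagonal with entries comparable to $(\mu^2+|\xi_i|^2)^{\frac{p_i-2}{2}}$. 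Absorbing $u_{x_ix_j}$ into the left-hand side leaves
\[
\sum_{i\neq j}\int|D\eta|^2\,|u_{x_j}|^2\,(\mu^2+|u_{x_i}|^2)^{\frac{p_i-2}{2}}\,\mathrm{d}x ,
\]
which is a \emph{negative} power of one component multiplying a different component. For $p_i<2$ and $\mu=0$ it is singular on $\{u_{x_i}=0\}$. For $\mu>0$ it costs $\mu^{p_i-2}$, which is not uniform along your regularization. It is not controlled by any $L^{p_k}$ or $L^{p_k+2}$ norm. Using $u_{x_ix_j}=u_{x_jx_i}$ to borrow the weight of $u_{x_j}$ does not help either, because the bad factor comes from $D^2_\xi f$ itself. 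Your Step 4 treats only the $g$-terms; that part is correct and matches the paper's treatment of $I_2$. So the Caccioppoli inequality does not close as written. Your closing remark about mixed derivatives senses the problem, but you give no mechanism for it.

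The missing idea, which is the paper's route, is never to let $D_\xi f$ be differentiated in the cutoff term; in effect, you integrate that term back by parts. Concretely:
\begin{itemize}
\item Test the \emph{undifferentiated} equation with $\tau_{j,-h}(\eta^2\tau_{j,h}u)$. Move the difference quotient onto $D_\xi f$ only in the piece $\tau_{j,-h}(\eta^2\tau_{j,h}Du)$.
\item The cutoff piece stays $2\int\langle D_\xi f(x,Du),\tau_{j,-h}(\eta D\eta\,\tau_{j,h}u)\rangle\,\mathrm{d}x$. Formally, after dividing by $h^2$, this is a multiple of $\int\langle D_\xi f(x,Du),\partial_{x_j}(\eta D\eta\,u_{x_j})\rangle\,\mathrm{d}x$.
\item It involves only $|D_\xi f|\le L\sum_i(\mu^2+|u_{x_i}|^2)^{\frac{p_i-1}{2}}$ against the \emph{pure} derivative $u_{x_jx_j}$ (or $u_{x_j}$).
\item Split $|u_{x_jx_j}|=(\mu^2+|u_{x_j}|^2)^{\frac{p_j-2}{4}}|u_{x_jx_j}|\cdot(\mu^2+|u_{x_j}|^2)^{\frac{2-p_j}{4}}$. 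After Young's inequality only nonnegative powers remain.
\item H\"older with $u_{x_i}\in L^{p_i+2}$ and $u_{x_j}\in L^{p_j+2}$ then closes precisely because $p_i<\frac32p_j+1$, which is automatic under \eqref{pi}.
\item The lower-order piece with $|\tau_{j,-h}u|$ needs $u_{x_j}\in L^{p_i}$ for $i>j$. This comes from interpolating between $L^{p_j}$ and $L^{p_j+2}$, using $p_i<p_j+2$.
\item All resulting $L^{p_k+2}$ terms appear with sub-critical powers. They are absorbed through the interpolation inequality and the iteration lemma, just like your $g$-terms.
\end{itemize}
So boundedness is needed above all for this cutoff term, not only for the coefficients.

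Two smaller points. First, adding a quadratic term does not change the $\mu$ in \eqref{A2}--\eqref{A3}. So for $\mu=0$, neither $u_\varepsilon\in W^{2,2}_{\mathrm{loc}}$ nor the differentiated equation is justified as stated. The difference-quotient formulation needs only $V_{p_i}((u_\varepsilon)_{x_i})\in W^{1,2}_{\mathrm{loc}}$, as in Lemma~\ref{LemmaInduzione}. Second, your truncation bound on $\|u_\varepsilon\|_\infty$ and your normalization by constants are genuinely useful. The a priori constant depends on the $L^\infty$ norm of the approximants, and the paper does not spell out why that stays uniform.
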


The proof relies on an approximation procedure combined with uniform a priori estimates. We first establish the higher differentiability of the minimizers of suitably regularized functionals , that is with Lipschitz continuous coefficients, by means of the difference quotient techniques adapted to the anisotropic structure. This is the first main step in our proof, since it provides the necessary regularity of the approximating minimizers. We then derive uniform estimates independent of the regularization parameter; this step represents the core of our argument. It is achieved by using  the well-known difference quotient method and  this is the point where the subquadratic growth comes into play and requires specific techniques. The final step consists in constructing a family of approximating problems whose minimizers are regular enough to satisfy the assumptions of the a priori estimate  and then proving that the estimate is preserved in  the limiting argument.\\
The assumption \( p_n \le \bar{p}^* \) is required only to address the case of bounded minimizers. In its absence---namely, when minimizers may be unbounded, as illustrated by the counterexample in~\cite{MarcelliniEs1, MarcelliniEs2}---our results can still be established under a stronger condition on \( g \), specifically by assuming \( g \in L^r \) with \( r > n \).\\
The paper is organized as follows. After collecting preliminary material and notation, we introduce the approximating functionals and prove auxiliary results needed for the a priori estimates. We then establish uniform bounds for the regularized problems and finally pass to the limit, thereby completing the proof of Theorem \ref{thmPrincipale}.

\section{Preliminaries}
In this section we introduce some notations and collect several results that we shall use to establish our main result.
We   denote by $c$ or $C$ a general constant that may vary on different occasions, even within the same line of estimates. Relevant dependencies will be emphasized if necessary.

In what follows, $B(x,r)=B_{r}(x)= \{ y \in \mathbb{R}^{n} : |y-x | < r  \}$ will denote the ball centered at $x$ of radius $r$. We will omit the indication of the center $x$ when no confusion arises.

We recall here the well-known Sobolev–Troisi inequalities \cite{troisi} (see also \cite{feopass} for a weighted version). 
\begin{lem}\label{LemmaTroisi}
    Let $E \subset \mathbb{R}^n$ be a bounded open set and consider $u \in W_0^{1, \mathbf{p}}(E), \, p_i \geq 1$ for all $i=1, ..., n,$ and set
    \begin{align}\label{definizionePi}
        \frac{1}{\overline{p}} = \frac{1}{n} \sum_{i=1}^{n} \frac{1}{p_i}, \qquad \overline{p}* = \frac{n \overline{p}}{n- \overline{p}}.
    \end{align}
    Then
    \begin{itemize}
        \item[1)]  If $\overline{p} < n$,  there exists a positive constant $\gamma$ depending only on the  parameters $\{ n, p_1, \dots, p_n \}$, such that
    \begin{align*}
        \lvert \lvert u \rvert \rvert_{\overline{p}*}\leq \gamma \sum_{i=1}^{n} \lvert \lvert u_{x_i} \rvert \rvert_{p_i}.
    \end{align*}
    \item[2)] If $\overline{p} = n$, for every $1 \leq r < \infty$  there exists a positive constant $\gamma_1$ depending only on  $\{ n, r, p_1, \dots, p_n \}$, such that
    \begin{align*}
        \lvert \lvert u \rvert \rvert_{r} \leq \gamma_1  \left[ \sum_{i=1}^{n} \lvert \lvert u_{x_i} \rvert \rvert_{p_i} + \lvert \lvert u \rvert \rvert_{1} \right].
    \end{align*}
  \item[3)]     If $\overline{p} > n$,  there exists a positive constant $\gamma_2$ depending only on  $\{ n, p_1, \dots, p_n \}$, such that
    \begin{align*}
        \lvert \lvert u \rvert \rvert_{\infty} \leq \gamma_2  \left[ \sum_{i=1}^{n} \lvert \lvert u_{x_i} \rvert \rvert_{p_i} + \lvert \lvert u \rvert \rvert_{1} \right].
    \end{align*}
    \end{itemize}
\end{lem}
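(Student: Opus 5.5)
This is the classical Sobolev–Troisi inequality, so I would reproduce Troisi's argument, which combines a one-dimensional estimate in each coordinate direction with a Gagliardo-type iterated Hölder inequality. By density it suffices to treat $u\in C_c^\infty(E)$, extended by zero to $\mathbb{R}^n$. The base case is the anisotropic Gagliardo–Nirenberg estimate with all exponents equal to one:
\[
\|u\|_{L^{n/(n-1)}}\le \prod_{i=1}^n\|u_{x_i}\|_{L^1}^{1/n}.
\]
It follows from $|u(x)|\le\int_{\mathbb{R}}|u_{x_i}|\,dx_i$ for each $i$, multiplying the $n$ bounds, taking the $1/(n-1)$ power, and applying the Loomis–Whitney / generalized Hölder inequality one variable at a time.

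For case 1), I apply the base case to $v=|u|^{\gamma}$, with $\gamma\ge1$ to be chosen. Since $|v_{x_i}|=\gamma|u|^{\gamma-1}|u_{x_i}|$, Hölder's inequality gives
\[
\|u\|_{\gamma n/(n-1)}^{\gamma}\le \gamma\prod_{i=1}^n\Bigl(\|u_{x_i}\|_{p_i}\,\||u|^{\gamma-1}\|_{p_i'}\Bigr)^{1/n}.
\]
I then choose $\gamma$ so that every Lebesgue norm of $u$ on the right collapses to the single norm $\|u\|_{\bar p^*}$. Concretely, I take $\gamma=\bar p^*(n-1)/n$ and use $\||u|^{\gamma-1}\|_{p_i'}\le \|u\|_{\bar p^*}^{\gamma-1}$ up to a measure factor. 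A cleaner route is Troisi's refinement: use a different power in each direction and apply the iterated Hölder inequality with exponents tuned to $p_i$, so that the harmonic-mean relation $\frac1{\bar p}=\frac1n\sum\frac1{p_i}$ makes the exponents balance exactly. This gives
\[
\|u\|_{\bar p^*}\le c\prod_{i=1}^n\|u_{x_i}\|_{p_i}^{1/n}.
\]
Absorbing the common power of $\|u\|_{\bar p^*}$ is legitimate because $u$ is smooth with compact support, so that norm is finite. Finally, the AM–GM inequality bounds the geometric mean by $\frac1n\sum_i\|u_{x_i}\|_{p_i}$, which is the stated estimate.

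For cases 2) and 3), I use the boundedness of $E$. When $\bar p=n$, I lower the exponents slightly, $p_i\mapsto \tilde p_i<p_i$, so that $\tilde{\bar p}<n$ and $\tilde{\bar p}^*\ge r$. Case 1) together with Hölder on the bounded set $E$ then yields the claim; the $\|u\|_1$ term is harmless and can even be dropped for compactly supported $u$. When $\bar p>n$, I would run a Moser-type iteration of the estimate from case 1) applied to $|u|^{\gamma_k}$ with $\gamma_k\to\infty$. An alternative is an anisotropic Morrey argument: integrate along the coordinate directions over boxes whose side lengths are scaled anisotropically to match the $p_i$.

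The main obstacle is the exponent bookkeeping in the iterated Hölder step of case 1), namely choosing the direction-dependent powers so that every intermediate norm closes to $\bar p^*$ and no other norm survives. The $L^\infty$ case via iteration also needs care to keep the constants summable. Since the result is classical, in the paper it suffices to cite \cite{troisi}.
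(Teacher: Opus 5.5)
The paper does not prove this lemma; it only cites Troisi, so your closing remark matches what the authors do. The argument you sketch, however, has a genuine gap at exactly the step you flag.

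\textbf{Case 1).} Take a single power $\gamma=\bar p^*(n-1)/n$. Then $\gamma-1=\bar p^*/\bar p'$, so $\||u|^{\gamma-1}\|_{p_i'}=\|u\|_{\bar p^*p_i'/\bar p'}^{\gamma-1}$. For every $i$ with $p_i<\bar p$, the exponent $\bar p^*p_i'/\bar p'$ is strictly larger than $\bar p^*$, and it is infinite if $p_i=1$. Such an $i$ exists unless all $p_i$ coincide.

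H\"older on a bounded set controls lower exponents by higher ones, not conversely. Log-convexity of $L^q$ norms gives $\|w\|_{\bar p'}\le\prod_i\|w\|_{p_i'}^{1/n}$, which is again the wrong direction. So this route closes only in the isotropic case. Moreover, the ``measure factor'' would put $|E|$ into $\gamma$, contrary to the statement.

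The direction-dependent choice you only allude to is the whole point. Use $|u(x)|^{s_i}\le s_i\int_{\mathbb{R}}|u|^{s_i-1}|u_{x_i}|\,dx_i$ with
\[
s_i=1+\bar p^*\Bigl(1-\frac{1}{p_i}\Bigr),\qquad\text{so that}\qquad (s_i-1)p_i'=\bar p^*\ \ (p_i>1),\qquad \sum_{i=1}^n s_i=(n-1)\bar p^*.
\]
Gagliardo's product lemma and H\"older (trivial when $p_i=1$) then give
\[
\int|u|^{\bar p^*}\,dx\le\prod_{i=1}^n\bigl(s_i\|u_{x_i}\|_{p_i}\bigr)^{\frac{1}{n-1}}\Bigl(\int|u|^{\bar p^*}\,dx\Bigr)^{\frac{n(\bar p-1)}{(n-1)\bar p}}.
\]
The hypothesis $\bar p<n$ is exactly what makes the last exponent smaller than $1$. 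Absorbing gives $\|u\|_{\bar p^*}\le\prod_i(s_i\|u_{x_i}\|_{p_i})^{1/n}$, and AM--GM concludes.

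\textbf{Case 2).} You use H\"older on $E$ both for $\|u_{x_i}\|_{\tilde p_i}$ and for $\|u\|_r$, so your constant depends on $|E|$. The statement asserts that $\gamma_1$ depends only on $n,r,p_1,\dots,p_n$. With such a constant the term $\|u\|_1$ is not removable: for $p_i\equiv n$, the dilations $u(x/\lambda)$ with $\lambda\to\infty$ keep $\sum_i\|u_{x_i}\|_n$ fixed while $\|u\|_r\to\infty$. Calling that term harmless therefore misses its role.

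A scale-consistent argument works as follows. Take $\frac{1}{\tilde p_i}=\frac{1}{p_i}+\delta$ with $\max\{0,\frac1n-\frac1{\bar p}\}<\delta\le 1-\max_i\frac1{p_i}$; this range is nonempty when $\bar p\ge n$, since then $\sum_i 1/p_i\le1$. Apply case 1) to $|u|^\gamma$ with $\gamma>1$ to get
\[
\|u\|_{\gamma\tilde{\bar p}^*}^{\gamma}\le c\,\gamma\sum_{i=1}^n\|u_{x_i}\|_{p_i}\,\|u\|_{(\gamma-1)/\delta}^{\gamma-1}.
\]
Then interpolate the last norm between $L^1$ and the left-hand side, and absorb by Young.

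\textbf{Case 3).} Here you only name a method. Case 1) is not available when $\bar p>n$ without the same lowering of exponents. You also do not identify where $\bar p>n$ enters. It enters through $\frac{1}{\tilde{\bar p}^*}=\frac1{\bar p}-\frac1n+\delta<\delta$: the ratio $\gamma\delta\tilde{\bar p}^*/(\gamma-1)$ of consecutive exponents in the display then stays above $\delta\tilde{\bar p}^*>1$. This geometric gain is what makes a Moser iteration started from $\|u\|_1$ reach $L^\infty$ with controlled constants. For $\bar p=n$ the gain is only $\gamma/(\gamma-1)$, which is why case 2) stops at finite $r$.
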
 
The next   lemma (see \cite[Lemma $6.1$]{giusti}) is a well-known  tool in the regularity theory.
\begin{lem}\label{lm3} 
Let $R>0$ and let $Z(t)  :  [\rho,R] \rightarrow \mathbb{R}$ be a bounded nonnegative function. Assume that for $\rho \leq t  < s \leq R$ it holds
$$Z(t) \leq \theta Z(s) +A(s-t)^{- \alpha} + B(s-t)^{- \beta}+ C$$
with constants $\theta \in (0,1)$, $A$, $B$, $C \geq 0$ and $\alpha> \beta >0$. Then there exists a constant $c=c(\alpha, \theta, \beta)$ such that
$$Z(\rho) \leq c \biggl( A(R-\rho )^{- \alpha} + B(R-\rho )^{- \beta}+ C\biggr).$$
\end{lem}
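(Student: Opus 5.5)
The statement immediately above is the iteration lemma, Lemma \ref{lm3}, so I will prove it directly by the classical iteration along a geometric sequence of radii, as in \cite{giusti}.

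Fix $\lambda\in(0,1)$, to be chosen in terms of $\theta$ and $\alpha$. Define $t_0=\rho$ and
\[
t_{k+1}=t_k+(1-\lambda)\lambda^k(R-\rho),
\]
so that $t_k$ increases to $R$, every $t_k$ lies in $[\rho,R]$, and $t_{k+1}-t_k=(1-\lambda)\lambda^k(R-\rho)$.

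Apply the hypothesis with $t=t_k$ and $s=t_{k+1}$. Since $\alpha>\beta$ and $\lambda<1$, we have $\lambda^{-k\beta}\le\lambda^{-k\alpha}$. This gives
\[
Z(t_k)\le \theta Z(t_{k+1}) + (1-\lambda)^{-\alpha}\lambda^{-k\alpha}A(R-\rho)^{-\alpha}+(1-\lambda)^{-\beta}\lambda^{-k\alpha}B(R-\rho)^{-\beta}+C.
\]

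Iterate this inequality starting from $k=0$ and ending at $k=N$. By induction,
\[
Z(\rho)\le \theta^{N}Z(t_N)+\Bigl[(1-\lambda)^{-\alpha}A(R-\rho)^{-\alpha}+(1-\lambda)^{-\beta}B(R-\rho)^{-\beta}\Bigr]\sum_{k=0}^{N-1}(\theta\lambda^{-\alpha})^k + C\sum_{k=0}^{N-1}\theta^k.
\]

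The only real choice is $\lambda$: the geometric series must converge, which requires $\theta\lambda^{-\alpha}<1$. Any $\lambda$ with $\theta^{1/\alpha}<\lambda<1$ works, for instance $\lambda=\theta^{1/(2\alpha)}$. With this choice $\sum_{k}(\theta\lambda^{-\alpha})^k=(1-\theta^{1/2})^{-1}$ and $\sum_k\theta^k=(1-\theta)^{-1}$, so all constants depend only on $\theta$, $\alpha$ and $\beta$.

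It remains to let $N\to\infty$. Because $Z$ is bounded on $[\rho,R]$, say $Z\le M$, the term $\theta^N Z(t_N)\le\theta^N M$ tends to $0$; this is exactly where boundedness of $Z$ is used. Passing to the limit yields
\[
Z(\rho)\le c\bigl(A(R-\rho)^{-\alpha}+B(R-\rho)^{-\beta}+C\bigr),
\qquad c=c(\alpha,\beta,\theta).
\]

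The main point of care is the step where the $B$-term is bounded by $\lambda^{-k\alpha}$ rather than $\lambda^{-k\beta}$, which is legitimate only because $\alpha>\beta$. Once that is done, the single convergence condition $\theta\lambda^{-\alpha}<1$ controls both series, and the rest is routine summation.
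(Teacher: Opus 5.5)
Your proof is correct and matches the classical argument of Giusti's Lemma 6.1, which the paper cites instead of proving. That argument likewise iterates along $t_{k+1}=t_k+(1-\lambda)\lambda^k(R-\rho)$, bounds $\lambda^{-k\beta}\le\lambda^{-k\alpha}$, chooses $\lambda$ with $\theta\lambda^{-\alpha}<1$, and uses the boundedness of $Z$ to make $\theta^N Z(t_N)\to 0$; since also $(1-\lambda)^{-\beta}\le(1-\lambda)^{-\alpha}$, your constant in fact depends only on $\alpha$ and $\theta$.
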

The following interpolation type inequality will become significant later. The proof can be found in  \cite[Proposition 4.3]{LemmaBrasco} in a slightly different form; other interpolation inequalities of this type have been used in the regularity theory for a priori bounded minimizers and can be found in \cite{CKP1,25}. We report it here for the sake of completeness.

\begin{prop}\label{LemmaBrasco}
 Let $ \mathbf{q}=(q_1, \dots, q_n)$ with $q_1>1$ and let $u \in W^{1,\mathbf{q}}_{\mathrm{loc}}(\Omega)$. Assume that there exists   $i \in \{1,2,\dots,n \}$ such that
    $$|u_{x_i x_i}|^2 \left( \mu^2 + |u_{x_i}|^2 \right)^\frac{q_i-2}{2} \in L^{1}_{\mathrm{loc}}(\Omega).$$
   If $u \in L^{\infty}_{\mathrm{loc}}(\Omega),$  then $u_{x_i} \in L_{\mathrm{\mathrm{loc}}}^{q_i+2}(\Omega)$  and there exist constants $C=C(n, q_i )>0$ and $\gamma= \gamma(q_i)>0$ such that the following estimate
    \begin{align}\label{LemmaBr}
        \int_{B_\rho} \left( \mu^2 + |u_{x_i}|^2 \right)^\frac{q_i+2}{2}  \dx &\leq C ||u||^2_{\infty}  \, \int_{B_R} \left( \mu^2 + |u_{x_i}|^2 \right)^\frac{q_i-2}{2} |u_{x_ix_i}|^2  \dx \notag \\
        & \qquad +\frac{C ||u||^2_{\infty}}{(R- \rho)^\gamma}  \int_{B_R} \left( \mu^2 + |u_{x_i}|^2 \right)^\frac{q_i}{2} \dx 
    \end{align}
   holds for every pair of concentric balls $B_{\rho} \subset B_{R}\Subset  \Omega$.
\end{prop}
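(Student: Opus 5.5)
The statement to prove is Proposition~\ref{LemmaBrasco}. The plan is an integration by parts in the single direction $x_i$, moving one derivative from $u_{x_i}$ onto $u$, followed by Cauchy--Schwarz and absorption. Since the hypothesis only controls $u_{x_ix_i}$, everything is done with $\partial_{x_i}$ alone.

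Fix concentric balls $B_\rho\subset B_R$ and a cut-off $\eta\in C^\infty_c(B_R)$ with $\eta\equiv1$ on $B_\rho$ and $|D\eta|\le c/(R-\rho)$. I work first under the extra assumption that $u_{x_i}\in L^{q_i+2}(B_R)$ and $u$ is smooth enough to integrate by parts. The general case then follows either by mollification, using that the right-hand side of \eqref{LemmaBr} passes to the limit, or by truncating the weight, replacing $(\mu^2+|u_{x_i}|^2)^{q_i/2}$ with a bounded version and letting the truncation level go to infinity by monotone convergence.

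Write $W:=(\mu^2+|u_{x_i}|^2)$ and $I:=\int \eta^2 W^{\frac{q_i}{2}}u_{x_i}^2\,\dx$. Since $W^{\frac{q_i+2}{2}}\le W^{\frac{q_i}{2}}(\mu^2+u_{x_i}^2)$ and $\int_{B_R}\mu^2W^{q_i/2}$ is already part of the second term on the right of \eqref{LemmaBr}, it suffices to bound $I$. Writing $u_{x_i}^2=u_{x_i}\,\partial_{x_i}u$ and integrating by parts in $x_i$ gives
\[
I=-\int u\,\partial_{x_i}\bigl(\eta^2W^{\frac{q_i}{2}}u_{x_i}\bigr)\dx .
\]
Differentiating, $\partial_{x_i}(W^{q_i/2}u_{x_i})=\bigl(W^{q_i/2}+q_iW^{\frac{q_i-2}{2}}u_{x_i}^2\bigr)u_{x_ix_i}$, which is bounded in absolute value by $(1+q_i)W^{q_i/2}|u_{x_ix_i}|$. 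Hence
\[
I\le c\|u\|_\infty\int\eta^2W^{\frac{q_i}{2}}|u_{x_ix_i}|\,\dx+c\|u\|_\infty\int\eta|D\eta|W^{\frac{q_i}{2}}|u_{x_i}|\,\dx .
\]
I handle the two terms separately.

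In the first term I split $W^{q_i/2}|u_{x_ix_i}|=W^{\frac{q_i-2}{4}}|u_{x_ix_i}|\cdot W^{\frac{q_i+2}{4}}$ and apply Young's inequality. This gives
\[
\varepsilon\int\eta^2W^{\frac{q_i+2}{2}}+\frac{c}{\varepsilon}\|u\|_\infty^2\int_{B_R}W^{\frac{q_i-2}{2}}|u_{x_ix_i}|^2 .
\]

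In the second term I use $W^{q_i/2}|u_{x_i}|\le W^{\frac{q_i+1}{2}}=W^{\frac{q_i+2}{4}}W^{\frac{q_i}{4}}$ and Young's inequality again. This gives
\[
\varepsilon\int\eta^2W^{\frac{q_i+2}{2}}+\frac{c}{\varepsilon}\,\frac{\|u\|_\infty^2}{(R-\rho)^2}\int_{B_R}W^{\frac{q_i}{2}} .
\]

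Choosing $\varepsilon$ small, the $\varepsilon$-terms are absorbed into the left-hand side, which is finite under the preliminary assumption. The resulting estimate is \eqref{LemmaBr} with $\gamma=2$, up to the factor $\mu^2\le1$, which is harmless because $\mu^2W^{q_i/2}\le W^{q_i/2}$ and is absorbed into the constant.

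\textbf{Main obstacle.} The main difficulty is justifying the absorption when $u_{x_i}\in L^{q_i+2}$ is not known a priori, since that integrability is exactly the conclusion. I would handle it by truncation. Replace $W^{q_i/2}u_{x_i}$ by $\Phi_k(u_{x_i})$, where $\Phi_k(t)=(\mu^2+T_k(t)^2)^{q_i/2}t$ and $T_k$ is the truncation at level $k$. Its derivative is still controlled by $(\mu^2+T_k(t)^2)^{q_i/2}$, so the same computation works with every quantity finite. One then uses that $\partial_{x_i}\Phi_k(u_{x_i})$ is legitimate: $u_{x_i}$ has a weak $x_i$-derivative, since $W^{\frac{q_i-2}{4}}u_{x_i}$ is Sobolev in $x_i$ when $q_i\le2$, so the chain rule applies on lines. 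Finally let $k\to\infty$ by Fatou's lemma.
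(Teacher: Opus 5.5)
Your core computation is exactly the paper's. You integrate $\eta^2(\mu^2+|u_{x_i}|^2)^{\frac{q_i}{2}}u_{x_i}$ by parts in $x_i$ against $u$, then apply Young's inequality to the second-derivative and cut-off terms. Keeping $\eta^2$ on the absorbed terms, so that you absorb directly instead of going through Lemma~\ref{lm3}, is a harmless simplification.

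Both versions, however, presuppose $\int_{B_R}W^{\frac{q_i+2}{2}}\,\dx<\infty$, which is part of the conclusion. In the paper this is hidden in the requirement that $Z$ be bounded in Lemma~\ref{lm3}, and it is never addressed. You rightly single it out, but your truncation, read literally, does not settle it in the range $1<q_i<2$ that the paper needs. Set $\Phi_k(t)=(\mu^2+T_k(t)^2)^{\frac{q_i}{2}}t$ and $W_k:=\mu^2+T_k(u_{x_i})^2$. Two things fail.

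\textbf{Young splitting.} ``The same computation'' splits $W_k^{\frac{q_i}{2}}|u_{x_ix_i}|=W_k^{\frac{q_i-2}{4}}|u_{x_ix_i}|\cdot W_k^{\frac{q_i+2}{4}}$ and so produces $\int W_k^{\frac{q_i-2}{2}}|u_{x_ix_i}|^2$. On $\{|u_{x_i}|>k\}$ the weight $(\mu^2+k^2)^{\frac{q_i-2}{2}}$ is larger than $W^{\frac{q_i-2}{2}}$ because $q_i<2$. This term is therefore not controlled by the hypothesis.

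\textbf{Finiteness.} The quantity you absorb, $\int\eta^2W_k^{\frac{q_i}{2}}u_{x_i}^2\,\dx$, has integrand $(\mu^2+k^2)^{\frac{q_i}{2}}\eta^2u_{x_i}^2$ on $\{|u_{x_i}|>k\}$. Since $u_{x_i}\in L^2_{\mathrm{loc}}$ is not known a priori, the claim that ``every quantity is finite'' is not justified. Mollification is no easier either: the weighted second-derivative term is nonlinear in $u_{x_i}$ and is not obviously controlled after convolution.

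\textbf{A clean fix.} Use a bounded truncation, $\Phi_k(t)=(\mu^2+T_k(t)^2)^{\frac{q_i}{2}}T_k(t)$.

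First, $u_{x_ix_i}\in L^1_{\mathrm{loc}}$, by Cauchy--Schwarz with $u_{x_i}\in L^{2-q_i}_{\mathrm{loc}}$, which holds as $q_i>1$. So the chain rule gives $\partial_{x_i}\Phi_k(u_{x_i})=\Phi_k'(u_{x_i})u_{x_ix_i}$. This vanishes off $\{|u_{x_i}|<k\}$, where $W_k=W$. Hence $|\partial_{x_i}\Phi_k(u_{x_i})|\le(1+q_i)W^{\frac{q_i}{2}}|u_{x_ix_i}|\chi_{\{|u_{x_i}|<k\}}$, and your untruncated splitting applies verbatim.

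For the cut-off term, use $|\Phi_k(u_{x_i})|\le W_k^{\frac{q_i+1}{2}}$ and Young to get $\varepsilon\int\eta^2W_k^{\frac{q_i+2}{2}}+c_\varepsilon\int|D\eta|^2W^{\frac{q_i}{2}}$.

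Now set $A_k:=\int\eta^2W_k^{\frac{q_i}{2}}\bigl(\mu^2+T_k(u_{x_i})u_{x_i}\bigr)\dx$. This quantity:
\begin{itemize}
\item is finite;
\item equals $\mu^2\int\eta^2W_k^{\frac{q_i}{2}}$ plus the tested integral;
\item dominates $\int\eta^2W^{\frac{q_i+2}{2}}\chi_{\{|u_{x_i}|<k\}}$;
\item dominates $\int\eta^2W_k^{\frac{q_i+2}{2}}$, since $T_k(t)\,t\ge T_k(t)^2$.
\end{itemize}
So both $\varepsilon$-terms can be absorbed into $A_k$, which gives a bound uniform in $k$. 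Since the integrand of $A_k$ increases to $W^{\frac{q_i+2}{2}}$, monotone convergence concludes.

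Alternatively, you can keep your $\Phi_k$. Split as $\bigl(W^{\frac{q_i-2}{4}}|u_{x_ix_i}|\bigr)\bigl(W_k^{\frac{q_i}{2}}W^{\frac{2-q_i}{4}}\bigr)$. Then obtain finiteness from the integration-by-parts identity itself, line by line, using that its left-hand integrand is nonnegative.

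\textbf{The $\mu^2$ term.} The leftover $\mu^2\int_{B_R}W^{\frac{q_i}{2}}$ cannot be ``absorbed into the constant'' of the stated form with $C=C(n,q_i)$. For $u\equiv0$ and $\mu>0$ the stated inequality fails. What you, like the paper, actually prove is the estimate with this extra term, equivalently with a constant depending on $\|u\|_\infty$. That is the form in which it is used in Theorem~\ref{AppThm}.
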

\begin{proof}
   We fix  a ball $B_R \Subset \Omega$,  consider radii $\rho <s<t<R$ and  a positive cut-off function $\eta \in C_0^{\infty}(B_R)$ , with $\eta=1$ on $B_s$, $0 \leq \eta \leq 1$ and $ |D\eta| \leq \frac{c}{t-s}.$\\
 First, we observe that
    \begin{eqnarray}\label{InterpBrasco}
        &&\int_{\Omega} \eta^2 \big( \mu^2 + |u_{x_i}|^2 \big)^\frac{q_i+2}{2}\dx \cr\cr 
         &\leq &\int_{\Omega} \!\!\eta^2 \mu^2\! \left( \mu^2 + |u_{x_i}|^2 \right)^\frac{q_i}{2} \dx +  \underbrace{\int_{\Omega} \eta^2  \left( \mu^2 + |u_{x_i}|^2 \right)^\frac{q_i}{2} |u_{x_i}|^2 \dx}_{\textbf{\textbf{I}}},
    \end{eqnarray}
 and we estimate $\textbf{I}$  integrating by parts as follows
    \begin{align*}
     \textbf{I} &=  \int_{\Omega}  \eta^2 \left( \mu^2 + |u_{x_i}|^2 \right)^\frac{q_i}{2}u_{x_i}\,u_{x_i} \dx =  -\int_{\Omega} u D_{x_i} \left( \eta^2 \left( \mu^2 + |u_{x_i}|^2 \right)^\frac{q_i}{2}u_{x_i} \right) \dx \notag \\
        &= -\int_{\Omega} u \Bigg[ \eta^2 u_{x_ix_i}\left( \mu^2 + |u_{x_i}|^2 \right)^\frac{q_i}{2} 
        + q_i \eta^2 u_{x_i}u_{x_i x_i}\left( \mu^2 + |u_{x_i}|^2 \right)^\frac{q_i-2}{2}u_{x_i} \notag \\
        & \qquad\qquad + 2 \eta \eta_{x_i}u_{x_i}\left( \mu^2 + |u_{x_i}|^2 \right)^\frac{q_i}{2} \Bigg] \dx .
    \end{align*}
    Since $u \in L^{\infty}_{\loc}(\Omega)$ and by virtue of the properties of $\eta$,
    we have
     \begin{align}\label{A+Bintegrals}
     |\textbf{I} |\leq & \lvert \lvert u \rvert \rvert_{L^{\infty}} \Bigg[  \int_{B_t}   |u_{x_i x_i}| \,\left( \mu^2 + |u_{x_i}|^2 \right)^\frac{q_i}{2} \dx 
     +  q_i\int_{B_t}  |u_{x_i x_i}| |u_{x_i}|^2\left( \mu^2 + |u_{x_i}|^2 \right)^\frac{q_i -2 }{2}   \dx  \notag \\
        & \qquad +\frac{c}{t-s} \int_{B_t}  \left( \mu^2 + |u_{x_i}|^2 \right)^\frac{q_i+1}{2} \dx\Bigg]\notag \\
        &=:c \lvert \lvert u \rvert \rvert_{L^\infty} \big[ \textbf{I}_1+ \textbf{I}_2 + \textbf{I}_3\big].
    \end{align}
      Young's inequality yields
    \begin{align}\label{Aintegral}
       \textbf{I}_1
        & \leq  \varepsilon \int_{B_t} \left( \mu^2 + |u_{x_i}|^2 \right)^\frac{q_i +2}{2} \,  \dx + c_\varepsilon \int_{B_t} |u_{x_i x_i}|^2 \, \left( \mu^2 + |u_{x_i}|^2 \right)^\frac{q_i-2}{2}  \dx ,
    \end{align}
    \begin{align}\label{Bintegral}
      \textbf{I}_2
        & \leq  \int_{B_t}  \left( \mu^2 + |u_{x_i}|^2 \right)^{\frac{q_i-2}{2}} \, \left (\mu^2+|u_{x_i}|^2 \right)\,  |u_{x_i x_i}| \, \dx \notag \\
        & \leq  \varepsilon \int_{B_t}  \left( \mu^2 + |u_{x_i}|^2 \right)^\frac{q_i +2}{2} \,  \dx + c_\varepsilon \int_{B_t} |u_{x_i x_i}|^2 \, \left( \mu^2 + |u_{x_i}|^2 \right)^\frac{q_i-2}{2}  \dx 
    \end{align} 
      and
    \begin{align}\label{Cintegral}
      \textbf{I}_3
         & \leq  \varepsilon \int_{B_t}  \left( \mu^2 + |u_{x_i}|^2 \right)^{\frac{q_i+2}{2}} \,  \dx + \frac{c_\varepsilon}{(t-s)^2}\int_{B_t}\left( \mu^2 + |u_{x_i}|^2 \right)^\frac{q_i}{2} \, \dx,
    \end{align}
      where $\varepsilon>0$ will be chosen later. 
  Combining  \eqref{Aintegral}, \eqref{Bintegral} and \eqref{Cintegral} with \eqref{A+Bintegrals}, we deduce that
   \begin{align}\label{Iintbrasco}
       |\textbf{I}| & \leq 3\varepsilon \int_{B_t}  \left( \mu^2 + |u_{x_i}|^2 \right)^\frac{q_i +2}{2} \,  \dx + c_\varepsilon \int_{B_t} |u_{x_i x_i}|^2 \, \left( \mu^2 + |u_{x_i}|^2 \right)^\frac{q_i-2}{2}  \dx \notag \\
       & \qquad + \frac{c_\varepsilon}{(t-s)^2}\int_{B_t}\left( \mu^2 + |u_{x_i}|^2 \right)^\frac{q_i}{2} \, \dx,
   \end{align}
   and so, inserting \eqref{Iintbrasco} in \eqref{InterpBrasco}, we obtain
    \begin{align}
      \int_{B_s} &  \left( \mu^2 + |u_{x_i}|^2 \right)^\frac{q_i +2}{2}\dx  \leq c \lvert \lvert u\rvert \rvert_{L^\infty} \Bigg[   3\varepsilon \int_{B_t}  \left( \mu^2 + |u_{x_i}|^2 \right)^\frac{q_i +2}{2} \,  \dx \notag \\
      & \qquad +  \left(\frac{c_\varepsilon }{(t-s)^2} + \mu^2 \right) \int_{B_t}  \,  \left( \mu^2 + |u_{x_i}|^2 \right)^\frac{q_i}{2} \dx 
      + c_\varepsilon \int_{B_t} |u_{x_i x_i}|^2 \, \left( \mu^2 + |u_{x_i}|^2 \right)^\frac{q_i-2}{2}  \dx  \Bigg] .
    \end{align}
    Note that we may suppose that $\lvert \lvert u \rvert \rvert_\infty\not=0$, otherwise there is nothing to prove. Hence, we may choose   $\varepsilon = \frac{1}{6c \lvert \lvert u \rvert \rvert_\infty} $ and, applying Lemma \ref{lm3}, we conclude that
\begin{align*}
 &\int_{B_\rho}    \left( \mu^2 + |u_{x_i}|^2 \right)^\frac{q_i +2}{2}\dx\\ &\leq\ \frac{c}{(R- \rho)^2} \int_{B_R}  \left( \mu^2 + |u_{x_i}|^2 \right)^\frac{q_i}{2}\dx  + c \int_{B_R} |u_{x_i x_i}|^2 \, \left( \mu^2 + |u_{x_i}|^2 \right)^\frac{q_i-2}{2} \dx,
 \end{align*}
i.e. the thesis. 
\end{proof}  
We shall use the following local boundedness result for minimizers of the  anisotropic functional defined at \eqref{functional}, whose proof can be found in \cite[Theorem 1.1]{uLimitato}.
{ \begin{thm}\label{ulimitatoCupini}
    Let us consider the  functional 
    \begin{equation}\label{functional3}
    \mathcal{I}(v)=\int_\Omega g(x,Dv)\dx, 
    \end{equation}
with the integrand $g(x,\xi)$  satisfying  \eqref{crescitapi}, \eqref{A0}, \eqref{A1} and \eqref{A3'} with exponents $1<p_1\le\dots\le p_n$ such that $$p_n < \bar{p}^*,$$ where $\bar{p}^*$ has been defined at \eqref{definizionePi},
then every local minimizer $u$ of \eqref{functional3} is locally bounded.
\end{thm}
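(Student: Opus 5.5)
We would prove local boundedness by a De Giorgi-type level-set iteration. The two ingredients are an anisotropic Caccioppoli inequality on superlevel sets, and the Sobolev--Troisi inequality of Lemma \ref{LemmaTroisi}. It suffices to bound $\sup u_+$ on a ball, since $-u$ is a local minimizer of the functional with integrand $g(x,-\xi)$, which by \eqref{A0} satisfies the same hypotheses. Fix $B_{R_0}\Subset\Omega$, radii $R_0/2\le s<t\le R_0$, a level $k\ge 0$, and write $A_{k,\rho}=\{u>k\}\cap B_\rho$. Take a cut-off $\eta\in C^\infty_0(B_t)$ with $\eta=1$ on $B_s$ and $|D\eta|\le 2/(t-s)$, and set $\varphi=\eta^{\bar\mu}(u-k)_+$. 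Then $u-\varphi$ is admissible, and minimality gives
\[
\int_{A_{k,t}} g(x,Du)\,\dx\le\int_{A_{k,t}} g(x,Du-D\varphi)\,\dx .
\]
On $A_{k,t}$ we write $Du-D\varphi=(1-\eta^{\bar\mu})Du+\eta^{\bar\mu}\bigl(-\bar\mu\,\eta^{-1}(u-k)D\eta\bigr)$. Convexity of $g(x,\cdot)$ then bounds the right-hand side by $\int(1-\eta^{\bar\mu})g(x,Du)+\int\eta^{\bar\mu}g\bigl(x,-\bar\mu(u-k)D\eta/\eta\bigr)$. For the last integrand we use the upper bound in \eqref{crescitapi}, since $p_i\le p_n$ and $\eta^{\bar\mu}\eta^{-p_i}\le 1$ when $\bar\mu\ge p_n$; alternatively we use \eqref{A1} to pull the factor $\bar\mu/\eta$ out. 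The result is
\[
\int_{A_{k,t}}\eta^{\bar\mu} g(x,Du)\,\dx\le c\int_{A_{k,t}}\Bigl(1+\sum_{i=1}^n\Bigl(\frac{u-k}{t-s}\Bigr)^{p_i}\Bigr)\dx .
\]
Combined with the lower bound in \eqref{crescitapi}, this yields the Caccioppoli inequality $\sum_i\int_{A_{k,s}}|u_{x_i}|^{p_i}\le c\int_{A_{k,t}}\bigl(1+\sum_i (u-k)^{p_i}(t-s)^{-p_i}\bigr)$. If the $\eta$-weights do not close exactly, we add a hole-filling step together with Lemma \ref{lm3}.

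Next we run the iteration. Set $k_h=k_0(2-2^{-h})$ with $k_0\ge 1$, radii $\rho_h=\tfrac{R_0}{2}(1+2^{-h})$, and $J_h=\int_{A_{k_h,\rho_h}}(u-k_h)^{p_n}\,\dx$. Apply Lemma \ref{LemmaTroisi}(1) to $w=\tilde\eta(u-k_{h+1})_+\in W^{1,\mathbf p}_0$, where $\tilde\eta$ is a cut-off between $\rho_{h+1}$ and an intermediate radius. The hypothesis $p_n<\bar p^*$ (if $\bar p\ge n$ we use parts 2) and 3) instead, which is easier) lets us apply Hölder's inequality with the exponents $p_n/\bar p^*$ and $1-p_n/\bar p^*$:
\[
J_{h+1}\le \|w\|_{\bar p^*}^{p_n}\,|A_{k_{h+1},\rho_h}|^{1-p_n/\bar p^*}
\le c\Bigl(\sum_i\|w_{x_i}\|_{p_i}\Bigr)^{p_n}|A_{k_{h+1},\rho_h}|^{1-p_n/\bar p^*}.
\]
Each $\|w_{x_i}\|_{p_i}$ is estimated through the Caccioppoli inequality. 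On $A_{k_{h+1}}$ we have $u-k_h\ge k_0 2^{-h-1}$, so Chebyshev's inequality gives $|A_{k_{h+1},\rho_h}|\le (2^{h+1}/k_0)^{p_n}J_h$. Since $p_i\le p_n$ and $k_0\ge1$, every term $(u-k)^{p_i}$ and every constant is dominated by $c\,2^{hp_n}(u-k_h)^{p_n}$ on that set. This produces a recursion
\[
J_{h+1}\le C\,b^h\,k_0^{-\delta}\,J_h^{1+\varepsilon},\qquad \varepsilon=\frac{p_n}{p_1}\Bigl(1-\frac{p_n}{\bar p^*}\Bigr)+\Bigl(\frac{p_n}{p_1}-1\Bigr)\ \text{(roughly)} .
\]
Choosing $k_0$ large in terms of $J_0\le\int_{B_{R_0}}|u|^{p_n}$ then makes $J_h\to0$, hence $u\le 2k_0$ on $B_{R_0/2}$. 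The finiteness of $J_0$ itself follows from Lemma \ref{LemmaTroisi} applied to $\eta u$, again because $p_n<\bar p^*$.

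The main obstacle is the mismatch of exponents when passing from $\sum_i\|w_{x_i}\|_{p_i}$ to a power of $J_h$. Each summand is bounded by $\bigl(\int(\ldots)\bigr)^{1/p_i}$, so raising the sum to the power $p_n$ yields powers $J_h^{p_n/p_i}$ with different exponents. We would handle this by keeping the recursion in the nonhomogeneous form $J_{h+1}\le C b^h\sum_i J_h^{p_n/p_i}\,J_h^{1-p_n/\bar p^*}$, with the Chebyshev factor producing the measure term. We then restrict to the regime $J_h\le 1$, which is guaranteed at $h=0$ by taking $k_0$ large. In that regime the worst term is the one with the smallest exponent, which is still strictly bigger than $1$ precisely because $p_n<\bar p^*$. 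This is where the gap condition is used sharply, and it is the step whose bookkeeping needs the most care.
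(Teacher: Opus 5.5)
The paper gives no proof of this statement. It is quoted from \cite[Theorem 1.1]{uLimitato}. Your De Giorgi scheme is a correct, self-contained replacement in the strict case, and it shows how little is needed.

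The Caccioppoli step closes exactly, with no hole-filling. Subtracting the finite quantity $\int_{A_{k,t}}(1-\eta^{m})g(x,Du)\,\dx$ leaves $\int_{A_{k,s}}g(x,Du)\,\dx\le\int_{A_{k,t}}\eta^{m}g(x,-m(u-k)D\eta/\eta)\,\dx$. Taking $m=p_n$ gives $\eta^{m-p_i}\le1$ directly. With $m=\bar\mu$ you would need $\bar\mu\ge p_n$; this is true, since \eqref{A1} and \eqref{crescitapi} tested at $\xi=te_n$ with $\lambda\to\infty$ force it, but you did not justify it. So only convexity and \eqref{crescitapi} are used, and \eqref{A1}, \eqref{A3'} play no role. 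Even convexity could be dropped via your hole-filling fallback, because the two sides of \eqref{crescitapi} match componentwise.

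Your displayed ``rough'' exponent is wrong, but your last paragraph has the right mechanism. For $J_h\le 1$ you get $J_{h+1}\le C\,b^h k_0^{-\delta}J_h^{1+\varepsilon}$ with $\varepsilon=1-p_n/\bar p^*$ and $\delta=p_n\varepsilon$. This regime propagates, because the iteration lemma yields $J_h\le b^{-h/\varepsilon}J_0$.

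Your argument uses strictness sharply: $\varepsilon=0$ when $p_n=\bar p^*$. So by itself it does not reach the equality case allowed in the hypothesis of Theorem \ref{thmPrincipale}. The citation, for its part, hides that \eqref{A1} and \eqref{A3'} are superfluous for the strict statement.

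One step needs repair. The following all require $u\in L^{p_n}_{\mathrm{loc}}(\Omega)$:
admissibility of $\eta^{m}(u-k)_+$,
the application of Lemma \ref{LemmaTroisi} to $w$, and
$J_0<\infty$.
This integrability is not built into the definition of $W^{1,\mathbf p}_{\mathrm{loc}}(\Omega)$. It cannot be obtained by applying Lemma \ref{LemmaTroisi} to $\eta u$ as you propose, because $\eta u\in W^{1,\mathbf p}_0$ already presupposes it.

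A finite bootstrap fixes this. Suppose $u\in L^q_{\mathrm{loc}}$ with $p_1\le q<p_n$, starting from $q=p_1$. Then $\eta u\in W^{1,\tilde{\mathbf p}}_0$ with $\tilde p_i=\min\{p_i,q\}$, and a direct computation gives
\[
\frac1q-\frac{1}{\tilde{\bar p}^{*}}\ \ge\ \frac1n\Big(1+\frac{n}{p_n}-\sum_{i=1}^n\frac1{p_i}\Big).
\]
The right-hand side is a fixed positive number precisely because $p_n<\bar p^*$. After finitely many steps either $q\ge p_n$, or $\tilde{\bar p}\ge n$ and every $L^r_{\mathrm{loc}}$ is reached at once.
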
}
As usual when dealing with nonlinear problem, the extra differentiability of the local minimizers is expressed through  the function $V_{p, \mu}(\xi)$,
 defined at \eqref{DefVp}, that takes into account  the precise growth of  the integrand.

For further needs, we recall the following 
\begin{lem} \label{VpAndrea}
Let \(1< p \le 2 \). There exists positive constants \( c_1 ,c_2,c_3 \)   such that
\begin{equation*} \label{eq:2.3}
    c_1^{-1}\left( \mu^2 + |\xi|^2 + |\eta|^2 \right)^{\frac{ p-2}{4}} \leq 
   \frac{ \left| V_{p, \mu}(\xi) - V_{p, \mu}(\eta) \right| }{ |\xi - \eta|}
      \leq c_1 \left( \mu^2 + |\xi|^2 + |\eta|^2 \right)^{\frac{ p-2}{4}},
\end{equation*}
and
 \begin{eqnarray*}
 \Big\langle \left( \mu^2 + |\xi|^2 \right)^{\frac{ p-2}{2}}\xi- \left( \mu^2 + |\eta|^2 \right)^{\frac{ p-2}{2}}\eta, \xi-\eta\Big\rangle  &\geq&   c_2\, (\mu^2+|\xi|^{2}+|\eta|^{2})^{\frac{p-2}{2}}|\xi-\eta|^2 \cr\cr&\ge &   c_3 \left| V_{p, \mu}(\xi) - V_{p, \mu}(\eta) \right|^2 \end{eqnarray*}  
for every \( \xi, \eta \in \mathbb{R}^{n} \).
\end{lem}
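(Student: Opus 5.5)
The plan is to prove both chains of inequalities with the standard ``integration along the segment'' argument. For $z\in\mathbb{R}^n$ we set $A_\gamma(z):=(\mu^2+|z|^2)^{\gamma/2}$ and use the two maps
\[
V_{p,\mu}(z)=A_{\frac{p-2}{2}}(z)\,z, \qquad \mathcal{A}(z):=A_{p-2}(z)\,z .
\]
Assume first $\mu>0$, so that both maps are $C^1$. Put $\zeta=\xi-\eta$ and $z_t=\eta+t\zeta$. Then
\[
V_{p,\mu}(\xi)-V_{p,\mu}(\eta)=\int_0^1 DV_{p,\mu}(z_t)\,\zeta\,dt .
\]
A direct computation gives
\[
DV_{p,\mu}(z)=A_{\frac{p-2}{2}}(z)\Big(\mathrm{Id}+\tfrac{p-2}{2}\,\tfrac{z\otimes z}{\mu^2+|z|^2}\Big).
\]
This is a symmetric matrix with eigenvalues between $\tfrac{p+2}{4}A_{\frac{p-2}{2}}(z)$ and $A_{\frac{p-2}{2}}(z)$; indeed $\tfrac{p-2}{2}\cdot\tfrac{|z|^2}{\mu^2+|z|^2}\ge\tfrac{p-2}{2}>-\tfrac12$. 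The same computation for $D\mathcal{A}(z)$ gives eigenvalues between $(p-1)A_{p-2}(z)$ and $A_{p-2}(z)$.

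From these eigenvalue bounds we get two estimates:
\[
|V_{p,\mu}(\xi)-V_{p,\mu}(\eta)|\le |\zeta|\int_0^1 A_{\frac{p-2}{2}}(z_t)\,dt,
\]
and, testing against $\zeta$ and using Cauchy--Schwarz,
\[
|V_{p,\mu}(\xi)-V_{p,\mu}(\eta)|\,|\zeta|\ \ge\ \langle V_{p,\mu}(\xi)-V_{p,\mu}(\eta),\zeta\rangle\ \ge\ \tfrac{p+2}{4}\,|\zeta|^2\int_0^1 A_{\frac{p-2}{2}}(z_t)\,dt .
\]
Likewise,
\[
\langle \mathcal{A}(\xi)-\mathcal{A}(\eta),\zeta\rangle\ \ge\ (p-1)\,|\zeta|^2\int_0^1 A_{p-2}(z_t)\,dt .
\]
So everything reduces to the classical comparison, valid for $-1<\gamma\le 0$:
\[
(\mu^2+|\xi|^2+|\eta|^2)^{\gamma/2}\ \le\ \int_0^1(\mu^2+|z_t|^2)^{\gamma/2}\,dt\ \le\ c(\gamma)\,(\mu^2+|\xi|^2+|\eta|^2)^{\gamma/2}.
\]
We apply it with $\gamma=\frac{p-2}{2}\in(-\tfrac12,0]$ for $V_{p,\mu}$ and with $\gamma=p-2\in(-1,0]$ for $\mathcal{A}$.

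The lower bound in this comparison is immediate. Since $|z_t|\le\max(|\xi|,|\eta|)$, we have $\mu^2+|z_t|^2\le\mu^2+|\xi|^2+|\eta|^2$, and $\gamma\le0$ reverses the inequality. The upper bound is the only genuine step and the main obstacle, because the integrand may blow up near the point where the segment passes closest to the origin. We follow the Acerbi--Fusco/Giaquinta--Modica argument. Assume without loss of generality $|\xi|\ge|\eta|$, so $|\xi|^2+|\eta|^2\le 2|\xi|^2$.
\begin{itemize}
\item If $|\zeta|\le\frac12|\xi|$, then $|z_t|\ge|\xi|-|\zeta|\ge\frac12|\xi|$ for all $t$. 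The integrand is then bounded by $c\,(\mu^2+|\xi|^2)^{\gamma/2}$, which gives the claim.
\item Otherwise $|\zeta|\ge\frac12|\xi|$. Let $t_0$ minimize $|z_t|$ over $\mathbb{R}$. Then $|z_t|\ge|t-t_0|\,|\zeta|$, since $z_t-z_{t_0}=(t-t_0)\zeta$ is orthogonal to $z_{t_0}$. Hence
\[
\int_0^1|z_t|^{\gamma}\,dt\le|\zeta|^\gamma\int_0^1|t-t_0|^\gamma\,dt\le\tfrac{2}{1+\gamma}\,|\zeta|^\gamma\le c\,|\xi|^\gamma .
\]
Here the condition $\gamma>-1$ is exactly what makes $\int_0^1|t-t_0|^\gamma dt$ finite. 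Combining this with the trivial bound $(\mu^2+|z_t|^2)^{\gamma/2}\le\min\{\mu^\gamma,|z_t|^\gamma\}$ gives $c\,(\mu^2+|\xi|^2)^{\gamma/2}$.
\end{itemize}

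Inserting the comparison into the estimates above gives both sides of the first inequality, with $c_1$ depending only on $p$. It also gives the middle inequality of the second chain with $c_2=(p-1)$, up to the comparison constant. The last inequality $c_2(\mu^2+|\xi|^2+|\eta|^2)^{\frac{p-2}{2}}|\zeta|^2\ge c_3|V_{p,\mu}(\xi)-V_{p,\mu}(\eta)|^2$ follows by squaring the upper bound of the first part. Finally, for $\mu=0$ we pass to the limit $\mu\to0^+$ in all inequalities. Both sides are continuous in $\mu$ for fixed $(\xi,\eta)\neq(0,0)$, and the case $\xi=\eta=0$ is trivial. The constants do not depend on $\mu$, so the estimates survive the limit.
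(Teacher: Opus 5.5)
The paper gives no proof of this lemma and simply defers to Acerbi--Fusco (Lemma~2.1). Your argument is essentially the standard proof of that lemma, and it is correct: you integrate $DV_{p,\mu}$ and $D\mathcal{A}$ along the segment $z_t$, use their eigenvalue bounds, and compare $\int_0^1(\mu^2+|z_t|^2)^{\gamma/2}\,dt$ with $(\mu^2+|\xi|^2+|\eta|^2)^{\gamma/2}$ for $-1<\gamma\le 0$.

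There is one harmless slip. The smallest eigenvalue of $DV_{p,\mu}(z)$ is $1+\frac{p-2}{2}\,\frac{|z|^2}{\mu^2+|z|^2}\ge \frac p2$, not $\frac{p+2}{4}$; your own justification only yields $>\frac12$. So the lower constant in the first inequality is $\frac p2$ rather than $\frac{p+2}{4}$, and nothing else changes.
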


For the proof we refer to \cite[Lemma 2.1]{Fusco}.

\subsection{Difference quotient}
\label{secquo}
In this section we recall some properties of the finite difference quotient operator that will be needed in the sequel. 

\begin{dfn}
Let $F$ be a function defined in an open set $\Omega \subset \mathbb{R}^n$, let $h$ be a real number,  the finite difference operator $\tau_{s,h}F(x)$ is defined as follows
$$ 
\tau_{s,h}F(x) :=F(x+he_s)-F(x) ,$$
where $e_s$ denotes the direction of the $x_s$ axis.
\end{dfn}
The function $\tau_{s,h}F$ is defined in the set
$$\Omega_{|h|}: = \{ x \in \Omega : \mathrm{dist}(x,\partial \Omega)> |h|  \}= \{  x \in \Omega : x+he_s \in \Omega \}.$$
We start with the description of some elementary properties that can be found, for example, in \cite{giusti}. When no confusion  arises, we shall omit the index $s$, and we shall write simply $\tau_{h}$ instead of $ \tau_{s, h}$
\begin{prop}\label{rapportoincrementale}
Let $F \in W^{1,p}(\Omega)$, with $p \geq1$, and let $G:\Omega \rightarrow \mathbb{R}$ be a measurable function.
Then
\\(i) $\tau_{h}F \in W^{1,p}(\Omega_{|h|})$ and 
$$D_{i}(\tau_{h}F)=\tau_{h}(D_{i}F).$$
(ii) If at least one of the functions $F$ or $G$ has support contained in $\Omega_{|h|}$, then
$$\displaystyle\int_{\Omega}F \tau_h G   \dx = \displaystyle\int_{\Omega} G \tau_{-h}F \dx.$$
(iii) We have $$\tau_{h} (FG)(x)= F(x+h)\tau_{h} G(x)+G(x) \tau_{h} F(x).$$
\end{prop}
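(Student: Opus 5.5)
The three properties are elementary consequences of the translation invariance of Lebesgue measure. I would prove them one at a time, writing $T_hF(x):=F(x+he_s)$ for the translate, so that $\tau_hF=T_hF-F$.

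For (i), first observe that $x\in\Omega_{|h|}$ implies $x+he_s\in\Omega$. Hence $T_hF$ is well defined on $\Omega_{|h|}$. The change of variables $y=x+he_s$, whose Jacobian is $1$, gives $\|T_hF\|_{L^p(\Omega_{|h|})}\le\|F\|_{L^p(\Omega)}$. Next, fix a test function $\varphi\in C^\infty_0(\Omega_{|h|})$. The same substitution gives
\[
\int_{\Omega_{|h|}}T_hF\,\varphi_{x_i}\,\dx=\int F(y)\,(T_{-h}\varphi)_{x_i}(y)\,dy .
\]
The function $T_{-h}\varphi$ is smooth with compact support contained in $\Omega$, so the definition of weak derivative of $F$ turns the right-hand side into
\[
-\int D_iF\,T_{-h}\varphi\,dy=-\int_{\Omega_{|h|}}T_h(D_iF)\,\varphi\,\dx .
\]
Therefore $D_i(T_hF)=T_h(D_iF)\in L^p(\Omega_{|h|})$. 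Subtracting the corresponding trivial identity for $F$ itself yields $\tau_hF\in W^{1,p}(\Omega_{|h|})$ together with $D_i(\tau_hF)=\tau_h(D_iF)$.

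For (ii), the support hypothesis is what makes both integrals meaningful: every evaluation at a shifted point $x\pm he_s$ takes place inside $\Omega$, and where the shifted point would leave $\Omega$ the integrand vanishes. This is the point that needs the most care. I would first split
\[
\int_\Omega F\,\tau_hG\,\dx=\int_\Omega F(x)G(x+he_s)\,\dx-\int_\Omega FG\,\dx .
\]
In the first integral I substitute $y=x+he_s$, which turns it into $\int_\Omega F(y-he_s)G(y)\,dy$; the support condition guarantees that no mass is lost or created at $\partial\Omega$ under this substitution. Regrouping the two pieces then gives $\int_\Omega G\,\tau_{-h}F\,\dx$. Throughout I implicitly assume that $FG$ and its shifts are integrable, which is the case in every later application.

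Property (iii) is pure algebra. Adding and subtracting $F(x+he_s)G(x)$ gives
\[
F(x+he_s)G(x+he_s)-F(x)G(x)=F(x+he_s)\bigl(G(x+he_s)-G(x)\bigr)+G(x)\bigl(F(x+he_s)-F(x)\bigr),
\]
which is exactly the claimed formula. No step here is a genuine obstacle; the only point requiring care is the domain and support bookkeeping in (ii).
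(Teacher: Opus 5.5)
Your proposal is correct. It is the standard translation (change-of-variables) argument for these elementary facts. The paper does not prove the statement itself but refers to Giusti's book, where the proof goes essentially this way. The only step worth making fully explicit is the convention in (ii) that $F$ and $G$ are extended by zero outside $\Omega$, which your support bookkeeping already uses implicitly.
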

The next result about the finite difference operator is a kind of integral version of Lagrange Theorem.
\begin{lem}\label{ldiff}
If $0<\rho<R,$ $|h|<\frac{R-\rho}{2},$ $1<p<+\infty$ and $F, \, D_s F\in L^{p}(B_{R})$, then
\begin{center}
$\displaystyle\int_{B_{\rho}} |\tau_{s, h}F(x)|^{p} \dx \leq c(n,p)|h|^{p} \displaystyle\int_{B_{R}} |D_sF(x)|^{p} \dx$.
\end{center}
Moreover,
\begin{center}
$\displaystyle\int_{B_{\rho}} |F(x+h)|^{p} d x \leq  \displaystyle\int_{B_{R}} |F(x)|^{p}d x$.
\end{center}
\end{lem}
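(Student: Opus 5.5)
The plan is to reduce the first inequality to smooth functions by density and then write the increment as an integral of the derivative along the segment. The second inequality will follow from a translation argument.

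\emph{Smooth case.} First assume $F\in C^\infty(B_R)$ with $F, D_sF\in L^p(B_R)$. For $x\in B_\rho$ and $|h|<\frac{R-\rho}{2}$, the whole segment $\{x+the_s: t\in[0,1]\}$ lies in $B_{\rho+|h|}\subset B_R$. The fundamental theorem of calculus along this segment gives
\[
\tau_{s,h}F(x)=h\int_0^1 D_sF(x+the_s)\,dt .
\]
Jensen's (or H\"older's) inequality in $t$ then yields
\[
|\tau_{s,h}F(x)|^p\le |h|^p\int_0^1 |D_sF(x+the_s)|^p\,dt .
\]
We integrate over $B_\rho$ and exchange the order of integration by Fubini--Tonelli, which is allowed since the integrand is nonnegative. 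For each fixed $t$, the change of variables $y=x+the_s$ maps $B_\rho$ into $B_R$, so
\[
\int_{B_\rho}|D_sF(x+the_s)|^p\,\dx\le\int_{B_R}|D_sF|^p\,\dx .
\]
Integrating in $t\in[0,1]$ gives the claim with $c(n,p)=1$.

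\emph{General case.} For general $F$ with $F, D_sF\in L^p(B_R)$, where $D_sF$ is the weak derivative, we mollify: $F_\varepsilon=F*\varphi_\varepsilon$. Take $R'<R$ with $\rho+|h|<R'$; this is possible because $|h|<\frac{R-\rho}{2}$ leaves room. For $\varepsilon<R-R'$, $F_\varepsilon$ is smooth on $B_{R'}$ and $D_sF_\varepsilon=(D_sF)*\varphi_\varepsilon$ there. Moreover, $F_\varepsilon\to F$ and $D_sF_\varepsilon\to D_sF$ in $L^p(B_{R'})$. Applying the smooth case on the pair $B_\rho\subset B_{R'}$ gives
\[
\int_{B_\rho}|\tau_{s,h}F_\varepsilon|^p\,\dx\le |h|^p\int_{B_{R'}}|D_sF_\varepsilon|^p\,\dx .
\]
Letting $\varepsilon\to0$, the left side converges because $\tau_{s,h}F_\varepsilon\to\tau_{s,h}F$ in $L^p(B_\rho)$ (translates of $B_\rho$ stay inside $B_{R'}$). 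Finally we bound $\int_{B_{R'}}\le\int_{B_R}$.

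\emph{Translation estimate.} For the second inequality, the substitution $y=x+he_s$ sends $B_\rho$ onto $B_\rho(he_s)\subset B_{\rho+|h|}\subset B_R$. This gives $\int_{B_\rho}|F(x+he_s)|^p\,\dx=\int_{B_\rho(he_s)}|F(y)|^p\,dy\le\int_{B_R}|F|^p\,dy$.

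The only delicate point is the density step, namely checking that the translated balls stay inside the region where the mollifications converge. The choice of the intermediate radius $R'$ is exactly what the assumption $|h|<\frac{R-\rho}{2}$ makes possible.
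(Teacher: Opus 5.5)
Your proof is correct. It is the standard argument: the fundamental theorem of calculus along the segment $x+the_s$, Jensen's inequality in $t$, Fubini--Tonelli plus a translation, and then mollification on an intermediate ball $B_{R'}$. This is how the lemma is proved in the reference the paper cites for these elementary properties; the paper itself states it without proof, and your argument even gives $c(n,p)=1$.

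One cosmetic point concerns applying the smooth case on $B_\rho\subset B_{R'}$. What your smooth argument actually uses is $\rho+|h|<R'$, not $|h|<\frac{R'-\rho}{2}$. So either say this explicitly, or choose $R'\in(\rho+2|h|,R)$, which the hypothesis $|h|<\frac{R-\rho}{2}$ allows.
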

\noindent We conclude by recalling a result  that will be needed in the sequel.

\begin{lem}\label{Lemmahzero}
Let $ 0<\rho<R,$ $1<p<+\infty$ and $F\in L^{p}(B_{R})$. If there exists a positive constant $C$ such that
\begin{align}
\displaystyle\int_{B_{\rho}} |\tau_{s, h}F(x)|^{p} \dx \leq C|h|^{p}, 
\end{align}
for every $h$ such that $|h|<\frac{R-\rho}{2},$ then the distributional derivative $D_s F$ belongs to $L^p(B_\rho)$ and the following estimate
\begin{align}
\displaystyle\int_{B_{\rho}} |DF(x)|^{p} \dx \leq C 
\end{align}
holds.
\end{lem}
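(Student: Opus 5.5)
The last statement is Lemma \ref{Lemmahzero}, the classical difference quotient characterization of Sobolev derivatives. I will argue by weak compactness in $L^p(B_\rho)$, which is reflexive since $1<p<\infty$, followed by identification of the weak limit as the distributional derivative $D_sF$.

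\emph{Step 1 (bounded family).} For $0<|h|<\frac{R-\rho}{2}$ set $G_h:=\tau_{s,h}F/h$, defined on $B_\rho$ because $x+he_s\in B_R$ there. The hypothesis says $\|G_h\|_{L^p(B_\rho)}^p\le C$ uniformly in $h$. By reflexivity (Banach--Alaoglu plus separability), along a sequence $h_k\to0$ we get $G_{h_k}\rightharpoonup G$ weakly in $L^p(B_\rho)$. By weak lower semicontinuity of the norm,
\[
\int_{B_\rho}|G|^p\,\dx\le\liminf_k\int_{B_\rho}|G_{h_k}|^p\,\dx\le C .
\]

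\emph{Step 2 (identification).} Fix $\varphi\in C^\infty_0(B_\rho)$. For $|h|$ smaller than $\mathrm{dist}(\operatorname{supp}\varphi,\partial B_\rho)$, the discrete integration by parts in Proposition \ref{rapportoincrementale}(ii), applied after a change of variables on $B_R$ with $\varphi$ extended by zero, gives
\[
\int_{B_\rho}\frac{\tau_{s,h}F}{h}\,\varphi\,\dx=\int_{B_R}F\,\frac{\tau_{s,-h}\varphi}{h}\,\dx .
\]
Since $\varphi$ is smooth with compact support, $\tau_{s,-h}\varphi/h\to-\partial_s\varphi$ uniformly, with supports in a fixed compact set. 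Because $F\in L^p\subset L^1_{\mathrm{loc}}$, the right-hand side converges to $-\int F\,\partial_s\varphi\,\dx$. The left-hand side along $h_k$ converges to $\int G\varphi\,\dx$, since $\varphi\in L^{p'}$. Hence $\int F\partial_s\varphi\,\dx=-\int G\varphi\,\dx$, that is, $D_sF=G\in L^p(B_\rho)$ in the sense of distributions. This also shows that the limit does not depend on the subsequence.

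\emph{Step 3 (estimate).} Combining the two steps yields $\int_{B_\rho}|D_sF|^p\,\dx\le C$. The notation $DF$ in the statement should be read as $D_sF$. If the hypothesis holds for every direction $s=1,\dots,n$, summing gives the bound for the full gradient, up to a constant depending on $n$ and $p$.

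The only delicate point is Step 2: one must keep the translated test function inside the domain where $F$ is defined, which is why $|h|$ is taken small relative to the support of $\varphi$. Otherwise the argument is routine.
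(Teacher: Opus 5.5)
Your proof is correct and is the classical argument (uniform $L^p(B_\rho)$ bound on $\tau_{s,h}F/h$, a weakly convergent subsequence by reflexivity, identification of the limit with $D_sF$ via discrete integration by parts against $\varphi\in C^\infty_0(B_\rho)$, and weak lower semicontinuity of the norm). The paper does not prove this lemma; it only recalls it as a known fact from the literature, where it is proved in exactly this way. You are also right that the conclusion should read $\int_{B_\rho}|D_sF|^p\,\mathrm{d}x\le C$, since $DF$ in the statement is a typo for $D_sF$.
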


\section{A preliminary regularity result}\label{PreR}
In this section, we establish a regularity result for a class of problems exhibiting higher regularity assumptions compared to those associated with \eqref{functional}. This result will play a crucial role in the subsequent approximation procedure, providing the necessary analytical framework to justify the limiting arguments and ensuring the regularity of the minimizers of the approximating sequence.
\\
More precisely, let  $F(x,\xi): \Omega\times\mathbb{R}^n\to[0,+\infty)$  be a function satisfying \eqref{crescitapi}, \eqref{A0}, \eqref{A1}, \eqref{A2}, \eqref{A3} and \eqref{A3'} with the same constants and the same exponents $p_i$. Assume moreover that there exists a constant $K>0$  such that
\begin{equation}\label{A'4}
    |D_\xi  F(x,\xi)-D_\xi F(y, \xi)| \leq K |x-y|\, \, \sum_{i=1}^{n} \left( \mu^2+ |\xi_i|^2 \right)^\frac{p_i-1}{2}, 
    \tag{A4'}
\end{equation}
\noindent for a.e.\ $x,y \in \Omega $ and every $\xi \in \mathbb{R}^{ n}$.
\\
For $\varepsilon \geq 0$, let us consider 
\begin{equation}\label{Eqconepsilon}
 F_\varepsilon(x, \xi)=F(x,\xi)+ \varepsilon(1+|\xi|^\frac{p_n}{2})^2
\end{equation}
and observe that,  by virtue of \eqref{A2}, \eqref{A3} and Lemma \ref{VpAndrea}, the following conditions  
\begin{eqnarray}\label{A2primo}
    \langle D_\xi F_\varepsilon(x, \xi)-D_\xi F_\varepsilon(x, \eta),\xi-\eta\rangle  &\geq&  \ell \sum_{i=1}^{n}(\mu^2+|\xi_i|^2+|\eta_i |^2)^\frac{p_i -2}{2}|\xi_i - \eta_i |^2\cr\cr
    &&+c(p_n)\varepsilon(1+|\xi|^2)^\frac{p_n -2}{2}|\xi - \eta |^2
\end{eqnarray}
\begin{eqnarray}\label{A3primo}
     |D_\xi F_\varepsilon(x, \xi)-D_\xi F_\varepsilon(x, \eta)| &\leq&  L \sum_{i=1}^{n}{( \mu^2 + |\xi_i |^2+|\eta_i |^2)^\frac{p_i -2}{2}}|\xi_i - \eta_i | \cr\cr
     &&+C(p_n)\varepsilon(1+|\xi|^2)^\frac{p_n -2}{2}|\xi - \eta |,
\end{eqnarray}
 hold for a. e. $x \in \Omega$ and all $\xi, \eta \in \mathbb{R}^{ n}$. Estimate \eqref{A2primo}  implies that
 \begin{equation}\label{A4primo}
    \langle D_\xi F_\varepsilon(x, \xi)-D_\xi F_\varepsilon(x, \eta),\xi-\eta\rangle  \geq  c(p_n)\varepsilon(1+|\xi|^2)^\frac{p_n -2}{2}|\xi - \eta |^2
\end{equation}
 and, from the definition of $F_\varepsilon$ and \eqref{A3'}, also
 \begin{eqnarray}\label{A5primo}
     |D_\xi F_\varepsilon(x, \xi)|\le C(n,p_n, L)(1+|\xi|^2)^\frac{p_n-1 }{2}.
\end{eqnarray}
Moreover, we explicitly remark that \eqref{A'4} holds true also for $F_\varepsilon$ with the same constant.

For later use, we recall a Lipschitz regularity result for the minimizers of the functional \begin{equation}\label{functional2}\mathcal{F}_\varepsilon (x, Dv) =\int_\Omega F_\varepsilon(x, Dv),\end{equation} that can be deduced by  \cite[Theorem $2.2$]{GriMasPass} in the case $p=q=p_n$. 
\\
It is worth observing that the integrands \(F_\varepsilon\) are energy densities with standard \(p_n\)-growth, but they possess the special structure given by \eqref{A0}. Therefore, the regularity of the minimizers of the functionals \(\mathcal{F}_\varepsilon\) can be deduced from results concerning functionals with anisotropic structure and standard growth. Indeed, the term $ \varepsilon(1+|\xi|^\frac{p_n}{2})^2$, ensuring the standard growth, is only meant to allow the application of the aforementioned results without the gap bound otherwise required to obtain them.

\begin{thm}\label{thm2Mascolo}
    Let $u_\varepsilon \in W^{1,p_n}(\Omega)$ be a local minimizer of the functional $ \mathcal{F}_\varepsilon (u,\Omega)$ defined at \eqref{functional2} under the assumptions \eqref{A2}---\eqref{A'4} and \eqref{A2primo}---\eqref{A5primo} .
Then $u_\varepsilon \in W^{1,\infty}_{\mathrm{loc}}(\Omega)$ and the following estimate 
    \begin{equation}\label{lipestimate}
      \Vert D u_\varepsilon \Vert_{L^\infty(B_\rho)} \le C \left[1+ \int_{B_R} f_\varepsilon(x,D u_\varepsilon)\dx  \right]^\frac{\sigma}{p_1},  
    \end{equation}
     holds for every pair of concentric balls $B_\rho \subset B_R \Subset \Omega$, with a  positive constant $C$ depending on $n,p_n,\rho,R,\varepsilon$ and $\sigma=\sigma(p_n,n)>0$.
\end{thm}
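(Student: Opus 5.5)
The statement of Theorem \ref{thm2Mascolo} is essentially \cite[Theorem 2.2]{GriMasPass} specialized to $p=q=p_n$. My plan is therefore to verify that $F_\varepsilon$ fits the hypotheses of that result, rather than to rebuild a Lipschitz theory. The hypotheses to check are the standard $p_n$-growth from above and below, uniform ellipticity of $p_n$-type, Lipschitz dependence of $D_\xi F_\varepsilon$ on $x$, and the structure \eqref{A0}.

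First, the growth and ellipticity. Since $1<p_i\le p_n\le 2$ and $\mu\in[0,1]$, each term $(\mu^2+|\xi_i|^2)^{p_i/2}$ is bounded by $c(1+|\xi|^2)^{p_n/2}$. Hence \eqref{crescitapi} together with the added term gives
\[
c\,\varepsilon(1+|\xi|^2)^{\frac{p_n}{2}}\le F_\varepsilon(x,\xi)\le C(1+|\xi|^2)^{\frac{p_n}{2}}.
\]
The lower bound comes from the $\varepsilon$-term alone. From \eqref{A4primo} and \eqref{A3primo} I get $p_n$-ellipticity from below with constant $c(p_n)\varepsilon$. For the bound from above I use $(\mu^2+|\xi_i|^2+|\eta_i|^2)^{\frac{p_i-2}{2}}$; it is singular only when $p_i<2$ and $\mu=0$. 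This causes no harm: the cited result (and the standard $p$-growth theory of Fusco--Sbordone/Manfredi type) allows the $V_p$-type degenerate/singular structure.

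Second, I need the Lipschitz dependence on $x$. Inequality \eqref{A'4} holds for $F_\varepsilon$ with constant $K$, because the $\varepsilon$-term does not depend on $x$. Moreover $\sum_i(\mu^2+|\xi_i|^2)^{\frac{p_i-1}{2}}\le c(1+|\xi|^2)^{\frac{p_n-1}{2}}$, and this is exactly the form of $x$-regularity required in \cite{GriMasPass}.

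Finally, I apply the theorem, which yields $Du_\varepsilon\in L^\infty_{\mathrm{loc}}$ with the estimate \eqref{lipestimate}, with $f_\varepsilon$ read as $F_\varepsilon$. The constant depends on $\varepsilon$ through the ellipticity constant and on $\rho,R$ through the cut-off. The main obstacle is the bookkeeping in matching hypotheses, namely the $\mu=0$ singular case $p_i<2$. If the cited result needs nondegeneracy, I would first prove it for $\mu>0$ and then pass to the limit $\mu\to0$, using that the constants in \eqref{lipestimate} are independent of $\mu$.

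If instead a self-contained argument were needed, I would proceed in four steps. First, differentiate the Euler--Lagrange equation via difference quotients to get $W^{2,2}_{\mathrm{loc}}$-type information on $V_{p_n}(Du_\varepsilon)$. Second, test with $\eta^2(1+|Du_\varepsilon|^2)^{\gamma}u_{x_s}$ to obtain reverse Hölder inequalities, controlling the $x$-derivative terms by $K$. Third, run Moser iteration to bound $\sup|Du_\varepsilon|$ by a power $\sigma$ of the energy. Fourth, use Lemma \ref{lm3} to absorb terms.
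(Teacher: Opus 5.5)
Your strategy, reducing the statement to \cite[Theorem 2.2]{GriMasPass} with $p=q=p_n$ by checking hypotheses, is exactly what the paper does. The paper gives no argument beyond that citation and the remark that $F_\varepsilon$ has standard $p_n$-growth \emph{and} keeps the structure \eqref{A0}. The trouble is that your hypothesis check, which is the only content you add, misplaces the difficulty and relies on a false claim.

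The upper bound \eqref{A3primo} is not of isotropic $p_n$-type for any $\mu\in[0,1]$, not only for $\mu=0$. The componentwise weights $(\mu^2+|\xi_i|^2+|\eta_i|^2)^{\frac{p_i-2}{2}}$ are not dominated by $C(1+|\xi|^2+|\eta|^2)^{\frac{p_n-2}{2}}$. Already for $F(\xi)=\sum_i(1+|\xi_i|^2)^{p/2}$ with $p<2$ (that is, \eqref{esplicita} with $\mu=1$, $a_i\equiv1$, $p_i\equiv p$), at $\xi=Te_n$ the second derivative in direction $e_1$ equals $p$, while in direction $e_n$ it is of order $T^{p-2}$. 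So the ellipticity ratio of $F_\varepsilon$ grows like $T^{2-p}$ and is unbounded. Hence $F_\varepsilon$ does not satisfy $\nu(\mu^2+|\xi|^2)^{\frac{p-2}{2}}|\lambda|^2\le\langle D^2_\xi F\lambda,\lambda\rangle\le L(\mu^2+|\xi|^2)^{\frac{p-2}{2}}|\lambda|^2$, and the Manfredi/Fusco--Sbordone-type theory you invoke does not cover it. What the paper leans on is a result built for the anisotropic structure. The hypotheses you actually need to verify are therefore the structural ones: \eqref{A0} is preserved because $|\xi|^2=\sum_i|\xi_i|^2$, \eqref{A1} holds for $F_\varepsilon$ with exponent $\max\{\bar\mu,p_n\}$, and \eqref{A'4} holds as you noted.

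Your fallback does not repair this. For $\mu>0$, the best isotropic bound obtainable from \eqref{A3primo} is $|D_\xi F_\varepsilon(x,\xi)-D_\xi F_\varepsilon(x,\eta)|\le C(\mu^{p_1-2}+\varepsilon)|\xi-\eta|$. That is a $(p_n,2)$ non-uniformly elliptic structure whose constant blows up as $\mu\to0$. The $\mu$-independence of the constant in \eqref{lipestimate} is precisely what would have to be proved, so you cannot assume it in order to pass to the limit.

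Your self-contained Moser sketch has the same blind spot. Test the differentiated equation with $\eta^2(1+|Du_\varepsilon|^2)^{\gamma}(u_\varepsilon)_{x_s}$ and apply Cauchy--Schwarz. This leaves a cut-off term weighted by the upper bound in \eqref{A3primo}, i.e. by quantities like $(1+|Du_\varepsilon|^2)^{\gamma+1}\sum_i(\mu^2+|(u_\varepsilon)_{x_i}|^2)^{\frac{p_i-2}{2}}|\eta_{x_i}|^2$. The ellipticity $\varepsilon(1+|Du_\varepsilon|^2)^{\frac{p_n-2}{2}}$ does not control this term, and for $\mu=0$ it is not even bounded by a power of $|Du_\varepsilon|$. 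Handling this term is exactly where \eqref{A0} must enter, and your sketch never uses it.
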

\noindent We shall need the higher differentiability of the minimizers $u_\varepsilon$ of $\mathcal{F}_\varepsilon$ that, to the best of our knowledge, is not available in the literature in the subquadratic growth cases and that is contained in the following 
\begin{lem}\label{LemmaInduzione}
  Let $F_\varepsilon$ be defined at \eqref{Eqconepsilon} and assume that it satisfies  \eqref{A2}---\eqref{A'4} and \eqref{A2primo}---\eqref{A5primo}  with exponents $p_i$ verifying \eqref{crescitapi}. 
    Let  $u_\varepsilon \in W^{1,p_n}_{\mathrm{loc}}(\Omega) $ be a local minimizer of the functional \eqref{functional2}.
    Then 
    we have  $$V_{p_i}((u_\varepsilon)_{x_i}) \in W^{1,2}_{\mathrm{loc}}(\Omega), \quad \forall i \in \{1,\dots, n\} .$$
  
\end{lem}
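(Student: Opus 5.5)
We plan to prove the lemma with the standard difference quotient method, applied to the Euler--Lagrange equation of $\mathcal{F}_\varepsilon$. The key simplification is Theorem \ref{thm2Mascolo}: since $u_\varepsilon\in W^{1,\infty}_{\mathrm{loc}}(\Omega)$, every weight $(\mu^2+|(u_\varepsilon)_{x_i}|^2+|(u_\varepsilon)_{x_i}(x+he_s)|^2)^{\frac{p_i-2}{2}}$ is bounded from below by a positive constant on compact sets, because $p_i\le 2$. So the subquadratic degeneracy is harmless at this qualitative stage. We only need an estimate independent of $h$, not one independent of $\varepsilon$ or of $\|Du_\varepsilon\|_\infty$.

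\textbf{Step 1 (difference-quotient identity).} Fix $B_R\Subset\Omega$ and a cut-off $\eta\in C^\infty_0(B_R)$ with $\eta=1$ on $B_{R/2}$. For a direction $s$ and $|h|$ small, test the Euler--Lagrange equation $\int\langle D_\xi F_\varepsilon(x,Du_\varepsilon),D\varphi\rangle\,\dx=0$ with $\varphi=\tau_{s,-h}(\eta^2\tau_{s,h}u_\varepsilon)$, which is admissible since $u_\varepsilon\in W^{1,p_n}$. Using Proposition \ref{rapportoincrementale}(ii), we obtain
\[
\int\langle \tau_{s,h}[D_\xi F_\varepsilon(x,Du_\varepsilon)],\eta^2 D\tau_{s,h}u_\varepsilon+2\eta D\eta\,\tau_{s,h}u_\varepsilon\rangle\,\dx=0 .
\]
We then split $\tau_{s,h}D_\xi F_\varepsilon(x,Du_\varepsilon)$ into two parts:
\[
\bigl[D_\xi F_\varepsilon(x+he_s,Du_\varepsilon(x+he_s))-D_\xi F_\varepsilon(x+he_s,Du_\varepsilon(x))\bigr]+\bigl[D_\xi F_\varepsilon(x+he_s,Du_\varepsilon(x))-D_\xi F_\varepsilon(x,Du_\varepsilon(x))\bigr].
\]

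\textbf{Step 2 (estimates).} For the first bracket, paired with $\eta^2D\tau_hu_\varepsilon$, we use \eqref{A2primo} to get the positive term
\[
\ell\sum_i\int\eta^2(\mu^2+|(u_\varepsilon)_{x_i}|^2+|(u_\varepsilon)_{x_i}(x+he_s)|^2)^{\frac{p_i-2}{2}}|\tau_h(u_\varepsilon)_{x_i}|^2\,\dx .
\]
Paired with the $D\eta$ term, this bracket is controlled through \eqref{A3primo}, Young's inequality (absorbing into the positive term), and Lemma \ref{ldiff}:
\[
c\sum_i\int|D\eta|^2 (\mu^2+|(u_\varepsilon)_{x_i}|^2+\dots)^{\frac{p_i-2}{2}}|\tau_hu_\varepsilon|^2\,\dx\le c\,|h|^2 ,
\]
where the bound uses $Du_\varepsilon\in L^\infty$ and the weight bounded from below by $\mu^2+\cdots$. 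If $\mu=0$ the weight may blow up; in that case we instead bound it through the $\varepsilon$-term and Lipschitz bounds, or use $|\tau_h u|\le |h|\|Du\|_\infty$ together with $|a|^{p-2}|a|\le |a|^{p-1}$. The second bracket is bounded by \eqref{A'4} by $K|h|\sum_i(\mu^2+|(u_\varepsilon)_{x_i}|^2)^{\frac{p_i-1}{2}}\le C|h|$ on $B_R$. Paired with $\eta^2D\tau_hu_\varepsilon$, Young's inequality lets us absorb $|D\tau_hu_\varepsilon|^2$ into the $\varepsilon$-ellipticity term $c\varepsilon(1+|Du_\varepsilon|^2+\dots)^{\frac{p_n-2}{2}}|\tau_hDu_\varepsilon|^2\ge c\,\varepsilon\,C_L^{-1}|\tau_hDu_\varepsilon|^2$, leaving $C|h|^2$. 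The pairing with the $D\eta$ term is directly $\le C|h|^2$.

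\textbf{Step 3 (conclusion).} Combining the bounds gives
\[
\sum_i\int_{B_{R/2}}(\mu^2+|(u_\varepsilon)_{x_i}|^2+|(u_\varepsilon)_{x_i}(x+he_s)|^2)^{\frac{p_i-2}{2}}|\tau_{s,h}(u_\varepsilon)_{x_i}|^2\,\dx\le C|h|^2 .
\]
By Lemma \ref{VpAndrea}, this yields $\int_{B_{R/2}}|\tau_{s,h}V_{p_i,\mu}((u_\varepsilon)_{x_i})|^2\le C|h|^2$. Lemma \ref{Lemmahzero} then gives $D_sV_{p_i,\mu}((u_\varepsilon)_{x_i})\in L^2$ for every $s$, i.e.\ the claim. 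For $V_{p_i}=V_{p_i,0}$, the same holds since $\mu=0$ gives the stated function and $V_{p,0}$ is locally comparable.

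The main obstacle is making the argument independent of any degeneracy when $\mu=0$. We handle it by always absorbing into the uniformly elliptic $\varepsilon$-term, which, thanks to $p_n\le2$ and the local Lipschitz bound, gives $|\tau_hDu_\varepsilon|^2\lesssim|h|^2$, i.e.\ $u_\varepsilon\in W^{2,2}_{\mathrm{loc}}$. Then $V_{p_i}((u_\varepsilon)_{x_i})$ is handled via the chain rule and the weighted estimate above.
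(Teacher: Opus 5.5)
There is a genuine gap in the degenerate case $\mu=0$. The paper allows this case ($\mu\in[0,1]$), and the lemma has to cover it. The step that fails is your estimate of the term that pairs the $\xi$-difference of $D_\xi F_\varepsilon$ with the cut-off:
\[
T:=\int\bigl\langle D_\xi F_\varepsilon(x+he_s,Du_\varepsilon(x+he_s))-D_\xi F_\varepsilon(x+he_s,Du_\varepsilon(x)),\,2\eta D\eta\,\tau_{s,h}u_\varepsilon\bigr\rangle\,\dx .
\]
By \eqref{A3primo}, the $F$-part of the bracket is controlled only by $\sum_i w_i|\tau_{s,h}(u_\varepsilon)_{x_i}|$, where $w_i:=(\mu^2+|(u_\varepsilon)_{x_i}(x)|^2+|(u_\varepsilon)_{x_i}(x+he_s)|^2)^{\frac{p_i-2}{2}}$.

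The Lipschitz bound of Theorem \ref{thm2Mascolo} bounds $w_i$ from below. In $T$, however, you need an \emph{upper} bound on $w_i$, and for $\mu=0$, $p_i<2$ there is none. Consequently the Young remainder $\int|D\eta|^2 w_i|\tau_{s,h}u_\varepsilon|^2\dx$ is not $O(|h|^2)$ and may even be infinite.

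Your fallbacks do not repair this.
\begin{itemize}
\item Absorbing into the $\varepsilon$-term would require $w_i|\tau_{s,h}(u_\varepsilon)_{x_i}|\le C|\tau_{s,h}(u_\varepsilon)_{x_i}|$, i.e.\ Lipschitz continuity of $\xi\mapsto D_\xi F(x,\xi)$. This fails at $\xi_i=0$, where the bracket is genuinely of size $|\tau_{s,h}(u_\varepsilon)_{x_i}|^{p_i-1}$. Ellipticity from below cannot compensate for a missing Lipschitz bound from above.
\item Using $w_i|\tau_{s,h}(u_\varepsilon)_{x_i}|\le c|\tau_{s,h}(u_\varepsilon)_{x_i}|^{p_i-1}$ together with $|\tau_{s,h}u_\varepsilon|\le|h|\,\|Du_\varepsilon\|_\infty$ bounds the $F$-part of $T$ by $c|h|\int\eta|D\eta|\,|\tau_{s,h}(u_\varepsilon)_{x_i}|^{p_i-1}\dx$. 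Young's inequality against $\eta^2|\tau_{s,h}Du_\varepsilon|^2$ then leaves a remainder of order $|h|^{2/(3-p_i)}\gg|h|^2$, so Lemma \ref{Lemmahzero} cannot be applied.
\end{itemize}

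The paper avoids this by never taking the difference quotient of $D_\xi F_\varepsilon$ in the cut-off term. It keeps that term as $2\int\langle D_\xi F_\varepsilon(x,Du_\varepsilon),\tau_{s,-h}(\eta D\eta\,\tau_{s,h}u_\varepsilon)\rangle\dx$ (the terms $J_4$, $J_5$). It bounds $|D_\xi F_\varepsilon(x,Du_\varepsilon)|$ by \eqref{A3'}, \eqref{A5primo} and the Lipschitz bound, and expands the test-function factor as in \eqref{tauIntI2}.
\begin{itemize}
\item The piece containing $|h|\,|\tau_{s,-h}u_\varepsilon|$ is $O(|h|^2)$.
\item For the second difference, Lemma \ref{ldiff} gives $\int_{B_t}|\tau_{s,-h}\tau_{s,h}u_\varepsilon|\dx\le c|h|\,\|\tau_{s,h}(u_\varepsilon)_{x_s}\|_{L^2(B_r)}$.
\item Lemma \ref{VpAndrea} then gives $|\tau_{s,h}(u_\varepsilon)_{x_s}|\le c\,w_s^{-1/2}|\tau_{s,h}V_{p_s,\mu}((u_\varepsilon)_{x_s})|$.
\end{itemize}
Now only $w_s^{-1}$ enters, and it is bounded for every $\mu\ge 0$, because $p_s\le 2$ and $Du_\varepsilon$ is locally bounded. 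Young's inequality yields $\sigma\int_{B_r}|\tau_{s,h}V_{p_s,\mu}((u_\varepsilon)_{x_s})|^2\dx+C|h|^2$. This term lives on a ball larger than $\{\eta=1\}$, which is why the iteration Lemma \ref{lm3} is needed.

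For $\mu>0$ your route is correct and somewhat simpler. All weights are then bounded above and below, $D_\xi F_\varepsilon(x,\cdot)$ is Lipschitz on the relevant range, and no iteration is required. It does not, however, prove the lemma for $\mu=0$.

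Separately, your closing claim that $V_{p_i,0}$ and $V_{p_i,\mu}$ are ``locally comparable'' is not a valid reduction when $\mu>0$. For bounded $v$, the condition $V_{p,\mu}(v)\in W^{1,2}$ only amounts to $v\in W^{1,2}$, which does not give $\int|v|^{p-2}|Dv|^2\dx<\infty$. What the difference-quotient argument actually yields is the $\mu$-weighted statement, and that is what Theorem \ref{AppThm} uses.
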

\begin{proof}
Fix a ball $B_R \Subset \Omega$ and consider radii $s<\rho <t<  r  <R$, a cut-off function $\eta \in C_0^{\infty}(B_t)$, with $\eta=1$ on $B_{\rho}$, $0 \leq \eta \leq 1$, $|D \eta | \leq \frac{c}{t-\rho}$, $|D^2 \eta | \leq \frac{c}{(t-\rho)^2}$  and $|h|\leq \frac{r - t}{2}$. We test the   Euler-Lagrange equation of the functional $\mathcal{F_\varepsilon}$  with the function
$$\varphi = \tau_{j, -h} \left(  \eta^2 \tau_{j,h} u_\varepsilon \right) $$
thus obtaining
\begin{equation*}
    \int_{\Omega} \Big\langle D_{\xi}F (x, D u_\varepsilon(x))+ \varepsilon(1+ |D u_\varepsilon|^2)^{\frac{p_n-2}{2}}D u_\varepsilon, \tau_{j,-h} D  \left(  \eta^2 \tau_{j,h} u_\varepsilon \right) \Big\rangle \, \dx = 0 \, , 
\end{equation*}
and hence
\begin{eqnarray*}\label{IntEeqlemm}
    0 &=&\int_{\Omega} \Big\langle  D_{\xi}F (x, D u_\varepsilon(x))+ \varepsilon(1+ |D u_\varepsilon|^2)^{\frac{p_n-2}{2}}D u_\varepsilon, \tau_{j,-h} \left( \eta^2 \tau_{j,h} D u_\varepsilon \right) \Big\rangle \, \dx  \cr\cr 
   &&+\int_{\Omega} \Big\langle D_{\xi}F (x, D u_\varepsilon(x))+ \varepsilon(1+ |D u_\varepsilon|^2)^{\frac{p_n-2}{2}}D u_\varepsilon , \tau_{j,-h} \left( 2 \eta D \eta \tau_{j,h} (u_\varepsilon)  \right) \Big\rangle \, \dx.
\end{eqnarray*}
By \textit{(ii)} in Proposition \ref{rapportoincrementale}, we have
\begin{eqnarray*}\label{IntEqlemm}
    0 &=&\int_{\Omega} \Big\langle \tau_{j,h} \left(D_{\xi}F (x, D u_\varepsilon(x))+ \varepsilon(1+ |D u_\varepsilon|^2)^{\frac{p_n-2}{2}}D u_\varepsilon\right),  \eta^2 \tau_{j,h} D u_\varepsilon \Big\rangle \, \dx  \cr\cr 
   &&+\int_{\Omega} \Big\langle D_{\xi}F (x, D u_\varepsilon(x))+ \varepsilon(1+ |D u_\varepsilon|^2)^{\frac{p_n-2}{2}}D u_\varepsilon, \tau_{j,-h} \left(2 \eta D \eta \tau_{j,h} (u_\varepsilon) \right)  \Big\rangle \, \dx,
\end{eqnarray*}
that can be written as follows
\begin{align*}\label{s}
 0=\int_{\Omega} &\langle D_{\xi}F(x+e_jh, D u_\varepsilon(x+e_jh))- D_{\xi}F (x+e_jh, D u_\varepsilon(x)) , \eta^2  \tau_{j,h} D u_\varepsilon \rangle \, \dx \nonumber \\
    &+  \int_{\Omega} \langle D_{\xi}F (x+e_jh, D u_\varepsilon(x))- D_{\xi}F (x, D u_\varepsilon(x)) , \eta^2  \tau_{j,h} D u_\varepsilon \rangle \, \dx \nonumber \\
      &+ \varepsilon  \int_{\Omega} \langle {\tau_{j, h} \left((1+ |D u_\varepsilon(x)|^2)^\frac{p_n-2}{2}D u_\varepsilon(x) \right)},  \eta^2  \tau_{j,h} D u_\varepsilon \rangle \, \dx \nonumber \\
       &+ 2\int_{\Omega} \Big\langle D_{\xi}F (x, D u_\varepsilon(x)) , \tau_{j,-h} \left(  \eta D \eta \tau_{j,h} (u_\varepsilon)  \right) \Big\rangle \, \dx \nonumber \\
    &+ 2 \varepsilon  \int_{\Omega} \langle  (1+ |D u_\varepsilon(x)|^2)^\frac{p_n-2}{2}D u_\varepsilon(x) , \tau_{j,-h} \left(  \eta D \eta \tau_{j,h} (u_\varepsilon)  \right) \Big\rangle \, \dx \\ \nonumber 
    & =: J_1 + J_2 + J_3 + J_4 + J_5
\end{align*}
and so
\begin{equation}\label{SommaJ}
   J_1 +J_3 \leq |J_2|  + |J_4| +|J_5| .
\end{equation}
 Since $F(x,\xi)$ satisfies  \eqref{A2}, by Lemma \ref{VpAndrea} we infer
\begin{eqnarray}\label{J_1}
    J_1  &\geq& \ell \int_{\Omega} \eta^2 \sum_{i=1}^{n}\, \left( \,\mu^2 + |(u_\varepsilon)_{x_i}(x+e_jh)|^2+|(u_\varepsilon)_{x_i}(x) |^2 \, \right)^\frac{p_i -2}{2} \left| \tau_{j,h} \Big( (u_\varepsilon)_{x_i}(x) \Big) \right|^2 \dx\cr\cr
    &\ge& c(p_i)\ell  \int_{\Omega} \eta^2 \sum_{i=1}^{n}\, \left|\tau_{j,h}\Big( V_{p_i}((u_\varepsilon)_{x_i})\Big) \right|^2\dx=: c(p_i)\ell\, \textbf{\textbf{RHS}}.
\end{eqnarray}
The integral $J_3$ can be estimated 
using the second inequality in Lemma \ref{VpAndrea} 
as follows
\begin{eqnarray}\label{J5}
    J_3 &\geq& \frac{\varepsilon}{c(p_n)} \int_{\Omega} \eta^2 (1+ |D u_\varepsilon(x+ he_j)|^2 + |D u_\varepsilon(x)|^2)^\frac{p_n-2}{2} \Big\lvert \tau_{j, h}( D u_\varepsilon(x)) \Big\rvert^2\dx \cr\cr
    &\geq& \frac{\varepsilon}{c(p_n)} \int_{\Omega} \eta^2 \left|\tau_{j, h}\Big(V_{p_n}( D u_\varepsilon(x))\Big) \right|^2\dx=: \frac{\varepsilon}{c(p_n)} \overline{\textbf{\textbf{RHS}}}.
\end{eqnarray}
By virtue of \eqref{A'4}, Young's inequality and Lemma \ref{ldiff}, we obtain
\begin{align}\label{J_3}
     |J_2| 
    & \leq  K |h|\, \, \sum_{i=1}^{n}  \int_{\Omega} \eta^2  \left| \tau_{j,h} \Big( (u_\varepsilon)_{x_i} \Big) \right| \, \left( \mu^2+|(u_\varepsilon)_{x_i}(x+e_jh)|^2+ |(u_\varepsilon)_{x_i}(x)|^2 \right)^\frac{p_i -1}{2}\dx \notag \\
    & \leq \beta \int_{\Omega} \eta^2 \sum_{i=1}^{n}\, \left( \,\mu^2+ |(u_\varepsilon)_{x_i}(x+e_j h)|^2+|(u_\varepsilon)_{x_i}(x) |^2 \, \right)^\frac{p_i -2}{2} \left| \tau_{j,h} \Big( (u_\varepsilon)_{x_i} \Big) \right|^2 \dx  \notag \\
    & \qquad + c_{\beta} |h|^2 \, \sum_{i=1}^{n}  \int_{B_t} \eta^2 \left( \mu^2\, +|(u_\varepsilon)_{x_i}(x+e_j h)|^2+|(u_\varepsilon)_{x_i}(x) |^2 \, \right)^\frac{p_i }{2} \,  \dx \notag \\
     & \leq  \beta \textbf{\textbf{RHS}} + c_{\beta} |h|^{2} (1+ \Vert Du_\varepsilon \Vert_{L^{\infty}(B_R)})^{p_n},
\end{align}
where, in the last estimate, we used Theorem \ref{thm2Mascolo}.
 By \eqref{A3'} , we infer 
 \begin{align}\label{J2lemma}
    | J_4| &\leq  2L\int_{\Omega}  
    \sum_{i=1}^{n} 
        \left(\mu^2 + |(u_\varepsilon)_{x_i}|^2\right)^{\frac{p_i-1}{2}}
        \left| \tau_{j,-h} \!\left( \eta \, \eta_{x_i} \, \tau_{j,h} u_\varepsilon \right) \right| 
    \dx .
\end{align}
Note that 
by \textit{(iii)} of Proposition \ref{rapportoincrementale}, we have
\begin{align}\label{tauIntI2}
\bigl| \tau_{j,-h}\bigl[\eta(x)\eta_{x_i} (x) \cdot \tau_{j,h}u_ \varepsilon(x)\bigr] \bigr| 
&\leq \bigl| \tau_{j,-h}\eta(x) \cdot \eta_{x_i}(x - h e_j) \cdot \tau_{j,h}u_\varepsilon(x - h e_j) \bigr| \notag \\
&\quad + \bigl| \eta(x)\,\tau_{j,-h} \eta_{x_i}(x) \cdot \tau_{j,h}u_\varepsilon(x - h e_j) \bigr| \notag \\
&\quad + \bigl| \eta(x) \eta_{x_i}(x) \cdot\tau_{j,-h}\tau_{j,h}u_\varepsilon(x) \bigr| \notag \\
&\leq \frac{c |h|} {(t - \rho)^2} \bigl|\tau_{j,h}u_\varepsilon(x - h e_j)\bigr| 
+ \frac{c } {(t - \rho)}  \bigl|\tau_{j,-h}\tau_{j,h}u_\varepsilon(x)\bigr| \notag \\
&= \frac{c |h|} {(t - \rho)^2} \bigl|\tau_{j,-h}u_\varepsilon(x )\bigr|
+ \frac{c } {(t - \rho)}  \bigl|\tau_{j,-h}\tau_{j,h}u_\varepsilon(x)\bigr|.
\end{align}
Inserting \eqref{tauIntI2} in \eqref{J2lemma} and recalling the properties of \( \eta \), we get
\begin{align}
   | J_4|&\leq   \frac{c |h|} {(t - \rho)^2} \int_{ B_r }  
    \sum_{i=1}^{n} 
        \left(\mu^2 + |(u_\varepsilon)_{x_i}|^2\right)^{\frac{p_i-1}{2}}
       \bigl|\tau_{j,-h}u_\varepsilon(x )\bigr|  \dx \notag \\
        &\qquad+  \frac{c } {(t - \rho)}\sum_{i=1}^{n} \int_{ B_t }
        \left(\mu^2 + |(u_\varepsilon)_{x_i}|^2\right)^{\frac{p_i-1}{2}}
        \bigl|\tau_{j,-h}\tau_{j,h}u_\varepsilon(x)\bigr|  \dx \notag \\
        & \leq  \frac{c|h|^2}{(t-\rho)^2}\,  (1+ \lvert \lvert   Du_\varepsilon \rvert \rvert_{L^\infty(B_R)})^{p_n-1} \left(\int_{ B_R }   \, |(u_\varepsilon)_{x_j}|^{p_j}\dx \right)^{\frac{1}{p_j}} \notag\\
         &\qquad+  \frac{c |h|} {(t - \rho)}(1+ \lvert \lvert   Du_\varepsilon \rvert \rvert_{L^\infty(B_R)})^{p_n-1}
       \left(\int_{ B_r } 
        \bigl|\tau_{j,h}(u_\varepsilon)_{x_j}\bigr|^{p_j}  \dx\right)^{\frac{1}{p_j}}  \notag \\
        &\leq \frac{c|h|^2}{(t-\rho)^2}\,  (1+ \lvert \lvert   Du_\varepsilon \rvert \rvert_{L^\infty(B_R)})^{p_n}  \notag \\
       &\qquad+  \frac{c |h|} {(t - \rho)} \,  (1+ \lvert \lvert   Du_\varepsilon \rvert \rvert_{L^\infty(B_R)})^{p_n-1} 
       \left(\int_{ B_r } 
        \bigl|\tau_{j,h}(u_\varepsilon)_{x_j}\bigr|^{p_j}  \dx\right)^{\frac{1}{p_j}}, 
\end{align}
where we used Theorem \ref{thm2Mascolo}, H\"older's inequality and  Lemma \ref{ldiff}.\\
From Lemma \ref{VpAndrea}, H\"older's and  Young's inequalities and using   Theorem \ref{thm2Mascolo} again, we infer
\begin{align}\label{J2primofinale}
      |  J_4|& \leq \frac{c|h|^2}{(t-\rho)^2}\,  (1+ \lvert \lvert   Du_\varepsilon \rvert \rvert_{L^\infty(B_R)})^{p_n}+  \frac{c |h|} {(t - \rho)} (1+ \lvert \lvert   Du_\varepsilon \rvert \rvert_{L^\infty(B_R)})^{p_n-1}  \notag \\
       &\cdot \left(\int_{ B_r } |\tau_{j,h} \left( V_{p_j}(u_{x_j}) \right)|^{p_j}
    \left( \mu^2 + |u_{x_j}(x+e_jh)|^2 + |u_{x_j}(x)|^2 \right)^{p_j\frac{2-p_j}{4}}   \dx\right)^{\frac{1}{p_j}} \notag \\
    & \leq \frac{c|h|^2}{(t-\rho)^2}\,  (1+ \lvert \lvert   Du_\varepsilon \rvert \rvert_{L^\infty(B_R)})^{p_n}+  \frac{c |h|} {(t - \rho)} (1+ \lvert \lvert   Du_\varepsilon \rvert \rvert_{L^\infty(B_R)})^{p_n-1}  \notag \\
       &\cdot  \left(\int_{ B_r } |\tau_{j,h} \left( V_{p_j}(u_{x_j}) \right)|^{2}\dx\right)^{\frac{1}{2}}
    \left(\int_{ B_r } \left( \mu^2 + |u_{x_j}(x+e_jh)|^2 + |u_{x_j}(x)|^2 \right)^{\frac{p_j}{2}}   \dx\right)^{\frac{2-p_j}{2p_j}} \notag \\
    & \leq \frac{c|h|^2}{(t-\rho)^2}\,  (1+ \lvert \lvert   Du_\varepsilon \rvert \rvert_{L^\infty(B_R)})^{p_n}   + \sigma \int_{ B_r } |\tau_{j,h} \left( V_{p_j}(u_{x_j}) \right)|^{2} \, \dx \notag \\
& + \frac{c |h|^2} {(t - \rho)^2}(1+ \lvert \lvert   Du_\varepsilon \rvert \rvert_{L^\infty(B_R)})^{2p_n-2}  \left(\int_{ B_r }  \left(\mu^2 + |u_{x_j}(x+e_jh)|^2 + |u_{x_j}(x)|^2\right)^{\frac{p_j}{2}} \dx\right)^{\frac{2-p_j}{p_j}} \notag \\
 & \leq \frac{c |h|^2} {(t - \rho)^2}(1+ \lvert \lvert   Du_\varepsilon \rvert \rvert_{L^\infty(B_R)})^{2p_n} + \sigma \sum_{i=1}^n\int_{B_r } |\tau_{i,h} \left( V_{p_i}(u_{x_i}) \right)|^{2} \, \dx.
\end{align}
Thanks to Theorem \ref{thm2Mascolo}, estimate \eqref{tauIntI2}, Lemmas \ref{ldiff} and  \ref{VpAndrea}, we have
\begin{align}\label{J6seconda}
|J_5| 
& \leq 2 \varepsilon  \int_{ B_t } \left|(1+ |D u_\varepsilon(x)|^2)^\frac{p_n-1}{2} \right| \, \Big| \tau_{j,- h} \left(\eta \, D \eta \, \tau_{j,h} u_\varepsilon \right) \Big| \, \dx  \notag \\
 &\leq c\varepsilon   (1+ \lvert \lvert   Du_\varepsilon \rvert \rvert_{L^\infty(B_R)})^{p_n-1}\int_{ B_t }  |\tau_{j, -h} \left(\eta \, D \eta \, \tau_{j,h} u_\varepsilon \right)|  \, \dx  \notag \\
 & \leq \frac{c\varepsilon  |h| }{(t-\rho)^2}(1+ \lvert \lvert   Du_\varepsilon \rvert \rvert_{L^\infty(B_R)})^{p_n-1}  \int_{ B_t }  \bigl|\tau_{j,-h}(u_\varepsilon)(x )\bigr| \dx \notag \\
& \qquad + \frac{c\varepsilon   }{(t-\rho)}(1+ \lvert \lvert   Du_\varepsilon \rvert \rvert_{L^\infty(B_R)})^{p_n-1}  \int_{ B_t }   \bigl|\tau_{j,-h}\tau_{j,h}(u_\varepsilon)(x)\bigr|\dx \notag \\
&  \leq \frac{c\varepsilon  |h|^2 }{(t-\rho)^2}(1+ \lvert \lvert   Du_\varepsilon \rvert \rvert_{L^\infty(B_R)})^{p_n-1}  \left(\int_{B_R} \bigl| (u_\varepsilon)_{x_j}\bigr|^{p_j} \dx\right)^{\frac{1}{p_j}} \notag \\
& \qquad + \frac{c\varepsilon  |h|}{(t-\rho)} (1+ \lvert \lvert   Du_\varepsilon \rvert \rvert_{L^\infty(B_R)})^{p_n-1} \left(\int_{ B_r }  \bigl|\tau_{j,h}\left( (u_\varepsilon)_{x_j}(x) \right) \bigr|^{p_j}\dx\right)^{\frac{1}{p_j}} \notag \\
&  \leq \frac{c\varepsilon  |h|^2}{(t-\rho)^2}(1+ \lvert \lvert   Du_\varepsilon \rvert \rvert_{L^\infty(B_R)})^{p_n} +  \frac{c\varepsilon  |h| }{(t-\rho)}(1+ \lvert \lvert   Du_\varepsilon \rvert \rvert_{L^\infty(B_R)})^{p_n-1}\cdot\notag \\
& \qquad\cdot  \left(\int_{ B_r }|\tau_{j,h}V_{p_j}((u_\varepsilon)_{x_j})|^{p_j}
    \left( \mu^2 + |(u_\varepsilon)_{x_j}(x+e_jh)|^2 + |(u_\varepsilon)_{x_j}(x)|^2 \right)^{p_j\frac{2-p_j}{4}}   \dx\right)^{\frac{1}{p_j}} \notag \\
    & \leq \frac{c|h|^2}{(t-\rho)^2}\,  (1+ \lvert \lvert   Du_\varepsilon \rvert \rvert_{L^\infty(B_R)})^{p_n}+  \frac{c |h|} {(t - \rho)} (1+ \lvert \lvert   Du_\varepsilon \rvert \rvert_{L^\infty(B_R)})^{p_n-1}  \notag \\
       &\cdot  \left(\int_{{ B_r }} |\tau_{j,h} \left( V_{p_j}(u_{x_j}) \right)|^{2}\dx\right)^{\frac{1}{2}}
    \left(\int_{{ B_r }} \left( \mu^2 + |u_{x_j}(x+e_jh)|^2 + |u_{x_j}(x)|^2 \right)^{\frac{p_j}{2}}   \dx\right)^{\frac{2-p_j}{2p_j}} \notag \\
&  \leq \frac{c\varepsilon  |h|^2 }{(t-\rho)^2}  (1+ \lvert \lvert   Du_\varepsilon \rvert \rvert_{L^\infty(B_R)})^{p_n}  + \sigma \int_{{ B_r }} |\tau_{j,h}V_{p_j}((u_\varepsilon)_{x_j})|^{2} \, \dx \notag \\
& \qquad +  \frac{c\varepsilon  |h|^2 }{(t-\rho)^2}(1+ \lvert \lvert   Du_\varepsilon \rvert \rvert_{L^\infty(B_R)})^{2p_n-2}  \left(\int_{ B_r }  \left(\mu^2 + |u_{x_j}(x+e_jh)|^2 + |u_{x_j}(x)|^2\right)^{\frac{p_j}{2}} \dx\right)^{\frac{2-p_j}{p_j}}\notag \\
&  \leq  \sigma \int_{ B_r } |\tau_{j,h}V_{p_j}((u_\varepsilon)_{x_j})|^{2} \, \dx   +  \frac{c\varepsilon  |h|^2 }{(t-\rho)^2}(1+ \lvert \lvert   Du_\varepsilon \rvert \rvert_{L^\infty(B_R)})^{2p_n},
\end{align}
where we also used Young's inequality.\\
Inserting \eqref{J_1}, \eqref{J5}, \eqref{J_3}, \eqref{J2primofinale}, \eqref{J6seconda}  in \eqref{SommaJ}, we obtain
\begin{align}\label{sum}
 c(p_i)\ell  \,{\textbf{\textbf{RHS}}}+ \frac{\varepsilon}{c} \overline{\textbf{\textbf{RHS}}}  & \leq  \beta \, {\textbf{\textbf{RHS}}} + 2\sigma \sum_{i=1}^n\int_{ B_r } |\tau_{i,h}V_{p_i}((u_\varepsilon)_{x_i})|^{2} \, \dx \notag\\
 &\qquad  + \frac{c  |h|^2 }{(t-\rho)^2}(1+ \lvert \lvert   Du_\varepsilon \rvert \rvert_{L^\infty(B_R)})^{2p_n}. 
\end{align}
 Choosing $\beta = \frac{c(p_i)\ell}{2}$, reabsorbing the first  term in the right-hand side  by the left-hand side of previous inequality and neglecting the non negative term $\overline{\textbf{RHS}}$ , we get
\begin{align}\label{stimaUnioneJ}
   \frac{c(p_i)\ell}{2} \int_{B_\rho}  \sum_{i=1}^{n}\, \Big|\tau_{j,h}\Big( V_{p_i}((u_\varepsilon)_{x_i})\Big)\Big|^2\dx &\leq 2\sigma \sum_{i=1}^n\int_{ B_r } \Big|\tau_{i,h}\Big(V_{p_i}((u_\varepsilon)_{x_i})\Big)\Big|^{2} \, \dx\notag \\
&  +  \frac{c  |h|^2 }{(t-\rho)^2}(1+ \lvert \lvert   Du_\varepsilon \rvert \rvert_{L^\infty(B_R)})^{2p_n} ,
\end{align}
where we used that  $\eta=1$ on $B_\rho$.
This  in particular yields, for every ${j}=1, \dots, n$,
\begin{align}
  \int_{B_{\rho}}\Big|\tau_{j,h}\Big(V_{p_j}((u_\varepsilon)_{x_j})\Big)\Big|^{2} \, \dx & \leq \frac{4 \sigma}{c(p_i)\ell} \sum_{i=1}^n\int_{ B_r } \Big|\tau_{i,h}\Big(V_{p_i}((u_\varepsilon)_{x_i})\Big)\Big|^{2} \, \dx \notag \\
& \qquad  +   \frac{c |h|^2 }{(t-\rho)^2}(1+ \lvert \lvert   Du_\varepsilon \rvert \rvert_{L^\infty(B_R)})^{2p_n}. 
  \end{align}
  Summing over $j=1,\dots, n$,  we infer
  \begin{align}\label{lemma31tau}
 \sum_{j=1}^n \int_{B_{\rho}}|\tau_{j,h}\Big(V_{p_j}((u_\varepsilon)_{x_j})\Big)|^{2} \, \dx & \leq \frac{4 \sigma n}{c(p_i)\ell}\sum_{i=1}^n\int_{ B_r } |\tau_{i,h}\Big(V_{p_i}((u_\varepsilon)_{x_i})\Big)|^{2} \, \dx \notag \\
& \qquad  +  \frac{c  |h|^2 }{(t-\rho)^2}(1+ \lvert \lvert   Du_\varepsilon \rvert \rvert_{L^\infty(B_R)})^{2p_n}  
  \end{align}
   Since previous estimate holds for all radii $s< \rho <t<r<R$ with constants independent of the radii, in particular it applies choosing
  $$t= \rho + \frac{r-\rho}{2},$$
  so that  $t-\rho=\frac{r- \rho}{2}$.  Hence, 
choosing $\sigma >0$ such that $ \frac{4 \sigma n}{c(p_i)\ell}<1$ and setting $$\phi(\rho)=\sum_{i=1}^n \int_{B_{\rho}}\left|\tau_{i,h}\Big(V_{p_i}((u_\varepsilon)_{x_i})\Big)\right|^{2} \, \dx, $$
inequality \eqref{lemma31tau} can be written as follows
    \begin{equation}\label{phirholemma31}
\phi(\rho) \leq \frac{4 \sigma n}{c(p_i)\ell} \phi({r})+  \frac{c|h|^2}{{(r-\rho)^2}}
\end{equation} 
Since \eqref{phirholemma31} holds true for every radii $s<\rho <t<r< R$,  we are legitimate to apply Lemma \ref{lm3} to the function $\phi: [s,R] \to \mathbb{R}$, thus getting 
    \begin{align}\label{lemma31tauii}
      \int_{B_s}\left|\tau_{i,h}\Big(V_{p_i}((u_\varepsilon)_{x_i})\Big)\right|^{2} \, \dx & \leq c |h|^2 ,
\end{align}
for a positive constant $c = c(n,p_i,\rho, R, \lambda, K,\Lambda, \lvert \lvert Du_\varepsilon \rvert \rvert_{\infty})$. Hence by Lemma \ref{Lemmahzero} we have
 $$V_{p_i}((u_\varepsilon)_{x_i}) \in W^{1,2}_{\mathrm{loc}}(\Omega), \qquad \forall i=1, \dots, n,$$ 
 which implies in particular that $$|(u_\varepsilon)_{x_i x_j}|^2|(u_\varepsilon)_{x_i}|^{p_i-2} \in L^1_{\mathrm{loc}}(\Omega), \qquad \forall i=1, \dots, n \text{ and  }  \forall j=1, \dots, n,
$$i.e. the conclusion.  \qedhere
\end{proof}

\section{The a priori estimate}\label{Apriorii}
The aim of this section is to establish an a priori estimate for the minimizers of the functional \eqref{functional} and this constitutes the core  of the proof of the main theorem. We shall use the  difference quotients method  combined with the interpolation inequality of Proposition \ref{LemmaBrasco}, which will be essential in estimating the terms involving the coefficients.
\begin{thm}\label{AppThm}
 Let $u \in W^{1,\mathbf{p}}(\Omega)$ be a local minimizer of \eqref{functional} under assumptions \eqref{crescitapi},\eqref{pi}, \eqref{A0}--\eqref{A4}. 
 Moreover, assume that 
 \begin{equation}\label{assp_n}
    p_n<\bar{p}^* ,
 \end{equation}
 with $\bar{p}^*$ defined at \eqref{definizionePi} and that the function $g$ appearing in \eqref{A4} belongs to $L^r_{\mathrm{loc}}(\Omega)$, with an exponent $r$ such that 
\begin{equation}\label{Ipotesir}
r> p_n + 2.
\end{equation}
 If 
\begin{equation}\label{Apriori}
V_{p_i}(u_{x_i}) \in {W^{1,2}_{\mathrm{loc}}(\Omega)}, \qquad\forall i=1,\dots,n
\end{equation}
then the following estimates
\begin{equation}\label{StimaTeo1}
      \sum_{i=1}^{n}\! \int_{B_{\rho}}\!\!\!\! \left( \mu^2 + |u_{x_i}|^2 \right)^{\frac{p_i-2}{2}} |u_{x_i x_j}|^2 \dx   \leq c  \left( \sum_{i=1}^{n} \Vert u_{x_i} \Vert_{L^{p_i}(B_R)} + \Vert g \Vert_{L^r (B_{R})} \right)^\sigma\!\!  
\end{equation}
and
\begin{align}\label{uxistima}
    \sum_{i=1}^{n}  \int_{B_{\rho}} \left( \mu^2 +|u_{x_i}|^2 \right)^\frac{p_i +2 }{2} \dx  & \leq c  \left( \sum_{i=1}^{n} \Vert u_{x_i} \Vert_{L^{p_i}(B_R)} + \Vert g \Vert_{L^r (B_{R})} \right)^\sigma 
\end{align}
hold for every pair of concentric balls $B_{\rho} \subset B_{R} \Subset \Omega$, where $c = c(n,p_i,\ell, L,\rho,R)$ and $\sigma= \sigma (n,p_i)$ are positive constants.
\end{thm}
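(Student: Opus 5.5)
We argue by difference quotients, as in the proof of Lemma \ref{LemmaInduzione}, but now keeping track of every constant so that nothing depends on $\Vert Du\Vert_{L^\infty}$ or on a Lipschitz constant of the coefficients. By Theorem \ref{ulimitatoCupini} and assumption \eqref{assp_n}, $u\in L^\infty_{\mathrm{loc}}(\Omega)$. The local bound for $\Vert u\Vert_{L^\infty(B_R)}$ is controlled by a power of $\sum_i\Vert u_{x_i}\Vert_{L^{p_i}(B_R)}$ plus a constant, by the quantitative form of that result. Moreover, \eqref{Apriori} together with Proposition \ref{LemmaBrasco} gives a priori $u_{x_i}\in L^{p_i+2}_{\mathrm{loc}}$. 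Hence all the quantities that will later be reabsorbed are finite.

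Fix radii $\rho\le s<t\le R$ and a cut-off $\eta$ between $B_s$ and $B_t$. Test the Euler--Lagrange equation with $\varphi=\tau_{j,-h}(\eta^2\tau_{j,h}u)$ and split the result into $J_1,J_2,J_4$ as before; there are no $\varepsilon$ terms here.

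\emph{Term $J_1$.} By \eqref{A2} and Lemma \ref{VpAndrea},
$$J_1\ge c\int\eta^2\sum_i|\tau_{j,h}V_{p_i}(u_{x_i})|^2\,\dx .$$

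\emph{Term $J_2$.} By \eqref{A4} it is bounded by
$$|h|\int\eta^2(g(x)+g(x+he_j))\sum_i(\mu^2+|u_{x_i}|^2)^{\frac{p_i-1}{2}}|\tau_{j,h}u_{x_i}|\,\dx .$$
Young's inequality with the weight $(\mu^2+|u_{x_i}(x)|^2+|u_{x_i}(x+he_j)|^2)^{\frac{p_i-2}{2}}$ absorbs half into $J_1$. What remains is
$$c|h|^2\int_{B_t}(g+g(\cdot+he_j))^2\sum_i(\mu^2+|u_{x_i}|^2+|u_{x_i}(\cdot+he_j)|^2)^{\frac{p_i}{2}}\,\dx .$$
We apply H\"older with exponents $\frac r2$ and $\frac{r}{r-2}$. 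Since $r>p_n+2$, we have $\frac{p_i r}{r-2}<p_i+2$. Interpolating between $L^{p_i}$ and $L^{p_i+2}$ then bounds this term by
$$c|h|^2\Vert g\Vert_{L^r(B_R)}^2\Big(\int_{B_t}(\mu^2+|u_{x_i}|^2)^{\frac{p_i+2}{2}}\Big)^{\vartheta_i}\Big(\int_{B_R}(\mu^2+|u_{x_i}|^2)^{\frac{p_i}{2}}\Big)^{1-\vartheta_i}$$
with $\vartheta_i<1$.

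\emph{Term $J_4$.} We treat $J_4$ via \eqref{tauIntI2}. Since $u$ is bounded, we do not use Lipschitz bounds. Instead we estimate $|\tau_{j,-h}\tau_{j,h}u|\le|h|\,|\tau_{j,h}u_{x_j}|$ in an integrated sense and write $\tau_{j,h}u_{x_j}$ through $\tau_{j,h}V_{p_j}$ and the weight, as in \eqref{J2primofinale}. Young's inequality gives $\frac14 J_1$ plus $\frac{c|h|^2}{(t-s)^2}\int_{B_R}\sum_i(\mu^2+|u_{x_i}|^2)^{\frac{p_i}{2}}$. The product of $(\mu^2+|u_{x_i}|^2)^{\frac{p_i-1}{2}}$ with $|\tau_{j,h}u_{x_j}|$ for $i\neq j$ is handled by Young's inequality with powers $p_i$ and $p_j$, and the first piece of \eqref{tauIntI2} uses $\Vert u\Vert_\infty$.

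Dividing by $|h|^2$ and letting $h\to0$ (Lemma \ref{Lemmahzero}) yields
$$\sum_{i,j}\int_{B_s}(\mu^2+|u_{x_i}|^2)^{\frac{p_i-2}{2}}|u_{x_ix_j}|^2\le c\,\Vert g\Vert_r^2\sum_i Y_i(t)^{\vartheta_i}A^{1-\vartheta_i}+\frac{cA}{(t-s)^2},$$
where $Y_i(t)=\int_{B_t}(\mu^2+|u_{x_i}|^2)^{\frac{p_i+2}{2}}$ and $A=\sum_i\int_{B_R}(\mu^2+|u_{x_i}|^2)^{\frac{p_i}{2}}$.

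We now insert Proposition \ref{LemmaBrasco}: $Y_i(s')$ is bounded by $\Vert u\Vert_\infty^2$ times the left-hand side on a slightly larger ball, plus a lower-order term. Young's inequality with exponents $\frac1{\vartheta_i}$ and $\frac1{1-\vartheta_i}$ makes $\frac12\sum_iY_i(t)$ appear on the right, with a large constant times powers of $\Vert g\Vert_r$, $\Vert u\Vert_\infty$ and $A$. Setting $Z(\cdot)=\sum_iY_i(\cdot)$ we get
$$Z(s)\le\tfrac12Z(t)+c(t-s)^{-\gamma}(\cdots)+C ,$$
and Lemma \ref{lm3} gives \eqref{uxistima}. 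Plugging this back into the second-derivative bound gives \eqref{StimaTeo1}, with $\sigma$ collecting the exponents coming from the $L^\infty$ bound and from the Young's inequalities.

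The main obstacle is the coefficient term $J_2$. Closing it requires exactly the strict inequality $\frac{p_n r}{r-2}<p_n+2$, i.e.\ $r>p_n+2$, so that $\vartheta_i<1$ and the term can be reabsorbed through Proposition \ref{LemmaBrasco}. A secondary difficulty is the $\Vert u\Vert_\infty$-dependence in Proposition \ref{LemmaBrasco}, which must be replaced quantitatively by norms of $Du$ to obtain the stated form of the right-hand side.
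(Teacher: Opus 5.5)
\paragraph{Where the proposal breaks.} Your architecture is the paper's: the same test function $\tau_{j,-h}(\eta^2\tau_{j,h}u)$, the coefficient term handled by H\"older with $\frac r2,\frac r{r-2}$ and $\frac{p_ir}{r-2}<p_i+2$, and reabsorption of the $L^{p_i+2}$ terms through Proposition \ref{LemmaBrasco} and Lemma \ref{lm3}. Iterating on $Z=\sum_iY_i$ rather than on the weighted Hessian is a harmless variant. The gap is in the cut-off term $J_4$ (the paper's $I_3$). You treat it as routine, but it is exactly where the anisotropy bites, and two of your steps fail there.

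First, in the first piece of \eqref{tauIntI2}, bounding $|\tau_{j,-h}u|\le 2\Vert u\Vert_\infty$ leaves a term of order $|h|$, which blows up after dividing by $|h|^2$. You must use Lemma \ref{ldiff} instead. Pairing with $(\mu^2+|u_{x_i}|^2)^{\frac{p_i-1}{2}}\in L^{p_i'}$ then requires $u_{x_j}\in L^{p_i}$, and for $i>j$ (so $p_i>p_j$) this is not controlled by $A$.

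Second, in the piece with $\tau_{j,-h}\tau_{j,h}u$, writing $\tau_{j,h}u_{x_j}$ through $\tau_{j,h}V_{p_j}(u_{x_j})$ and applying Young leaves
\[
\frac{c|h|^2}{(t-s)^2}\int(\mu^2+|u_{x_i}|^2)^{p_i-1}\big(\mu^2+|u_{x_j}(x)|^2+|u_{x_j}(x+he_j)|^2\big)^{\frac{2-p_j}{2}}\,\dx .
\]
Young with exponents $\frac{p_i}{2(p_i-1)}$ and $\frac{p_j}{2-p_j}$ is admissible only if $\frac{2(p_i-1)}{p_i}+\frac{2-p_j}{p_j}=1+\frac2{p_j}-\frac2{p_i}\le 1$, that is, only if $p_i\le p_j$. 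No other choice of exponents helps. If $|u_{x_i}|^{p_i}=|u_{x_j}|^{p_j}=T$ on a set of measure $1/T$, the energies stay bounded while this integral is of order $T^{\frac2{p_j}-\frac2{p_i}}\to\infty$. So your bound of $J_4$ by $\frac14 J_1+\frac{c|h|^2}{(t-s)^2}A$ is false for every pair with $p_i>p_j$. Calling $J_2$ the main obstacle is therefore misleading.

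\paragraph{The missing idea.} The higher integrability $u_{x_k}\in L^{p_k+2}_{\mathrm{loc}}$ must be used in $J_4$ as well, and closing the resulting estimates requires a compatibility between the exponents. The paper does this as follows.
\begin{itemize}
\item In the first piece, it interpolates $\Vert u_{x_j}\Vert_{L^{p_i}}$ between $L^{p_j}$ and $L^{p_j+2}$.
\item In the second piece, it applies H\"older with exponents $\frac{p_i+2}{p_i-1}$ and $\frac{p_i+2}{3}$. It then bounds $\Vert\tau_{j,h}u_{x_j}\Vert_{L^{(p_i+2)/3}}$ by $\Vert\tau_{j,h}V_{p_j}(u_{x_j})\Vert_{L^2}$ times powers of the $L^{p_j}$ and $L^{p_j+2}$ integrals, distinguishing the cases $p_j\ge\frac{p_i+2}{3}$ and $p_j<\frac{p_i+2}{3}$.
\item The $L^{p+2}$ terms then appear with powers strictly below one because $p_i<\frac32p_j+1$, which is automatic under \eqref{pi}.
\end{itemize}
In your notation, Young with exponents $\frac{p_i+2}{2(p_i-1)}$ and $\frac{p_j+2}{2-p_j}$ on the remainder above is admissible exactly when $p_i\le\frac32p_j+1$. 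It yields subcritical powers that can be interpolated and reabsorbed. Fed into your final step, the argument then closes.

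\paragraph{Two smaller points.} The Young term $\sigma\int|\tau_{j,h}V_{p_j}(u_{x_j})|^2$ coming from $J_4$ lives on a larger ball without the weight $\eta^2$. It therefore cannot be absorbed into $J_1$ directly and needs an iteration at the level of difference quotients, as in the paper. Also, $\Vert u\Vert_{L^\infty}$ cannot be bounded by norms of $Du$, since you can add a constant to $u$. To remove it, apply Proposition \ref{LemmaBrasco} to $u-k$, which is still a minimizer, together with a quantitative local bound for $u-k$. The paper's own proof in fact keeps $\Vert u\Vert_\infty$ in the constant.
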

\begin{proof} Before entering in the core of the proof, it is worth noticing that by virtue of \eqref{assp_n}, Theorem \ref{ulimitatoCupini}, which applies to the minimizer of \eqref{functional} by virtue of our assumptions on $f(x,\xi)$,  implies that $u\in L^\infty_{\loc}(\Omega)$. Hence, assumption \eqref{Apriori} allows us to use Proposition \ref{InterpBrasco} to deduce that
\begin{equation}\label{highint}
u_{x_i}\in L^{p_i+2}_{\loc}(\Omega)\qquad \text{for every } i=1,\dots,n
\end{equation}
together with estimate \eqref{LemmaBr}.\\
Fix a ball $B_R \Subset \Omega$ and consider radii $\rho<\varrho< s<t<t'<\rho'<R'< R$, a cut-off function $\eta \in C_0^{\infty}(B_t)$, with $\eta=1$ on $B_{s}$, $0 \leq \eta \leq 1$, $|D \eta | \leq \frac{c}{t-s}$  and $|h|\leq \frac{t' - t}{2}$.
We test the Euler-Lagrange equation of \eqref{functional} with the function
$$\varphi = \tau_{j, -h} \left(  \eta^2 \tau_{j,h} u \right) $$
thus obtaining
\begin{equation*}
    \int_{\Omega} \Big\langle f_{\xi} (x, D u(x)), \tau_{j,-h} D  \left(  \eta^2 \tau_{j,h} u \right) \Big\rangle \, \dx = 0 \, , 
\end{equation*}
and hence
\begin{eqnarray*}
   0&=& \int_{\Omega} \Big\langle f_{\xi} (x, D u(x)), \tau_{j,-h}   \left(  \eta^2  \tau_{j,h} Du  \right) \Big\rangle \, \dx  \cr\cr
   && \qquad +2 \int_{\Omega} \Big\langle f_{\xi} (x, D u(x)), \tau_{j,-h}   \left( \eta  \eta_{x_i} \tau_{j,h} u \right) \Big\rangle \, \dx .
\end{eqnarray*}
By $(ii)$ of Proposition \ref{rapportoincrementale},
\begin{align*}
   0= &\int_{\Omega} \Big\langle \tau_{j,h} f_{\xi} (x, D u(x),\eta^2  \tau_{j,h} Du  \Big\rangle \, \dx  \notag \\
   & \qquad +2 \int_{\Omega} \Big\langle f_{\xi} (x, D u(x)), \tau_{j,-h}   \left( \eta  \eta_{x_i} \tau_{j,h} u \right) \Big\rangle \, \dx \notag
\end{align*}
We  may rewrite the previous equality  as follows
\begin{align}\label{SommaIntera}
 0= &\int_{\Omega} \langle f_{\xi} (x+e_jh, Du(x+e_jh))- f_{\xi} (x+e_jh, Du(x)) , \eta^2  \tau_{j,h} Du \rangle \, \dx \notag \\
  &+  \int_{\Omega} \langle f_{\xi} (x+e_jh, Du(x))- f_{\xi} (x, Du(x)) , \eta^2  \tau_{j,h} Du \rangle \, \dx \notag \\
    &+ 2\int_{\Omega} \Big\langle f_{\xi} (x, D u(x)), \tau_{j,-h}   \left( \eta  \eta_{x_i} \tau_{j,h} u \right) \Big\rangle dx  \notag \\
    & =: I_1 + I_2 + I_3 , \notag
\end{align}
 that yields
\begin{equation}\label{SommaInt}
   I_1 \leq |I_2|  + |I_3| .
\end{equation}
Assumption \eqref{A2} together with Lemma \ref{VpAndrea} implies
\begin{align}\label{I1}
    I_1 
    & \geq \ell \int_{\Omega} \eta^2 \sum_{i=1}^{n}\, \left( \mu^2+ \, |u_{x_i}(x+e_jh)|^2+|u_{x_i}(x) |^2 \, \right)^\frac{p_i -2}{2}| \tau_{j,h} u_{x_i} |^2 \dx \nonumber\\
    &\geq \ell \int_{\Omega} \eta^2 \sum_{i=1}^{n}\, \left| \tau_{j,h} \Big(V_{p_i}(u_{x_i})\Big) \right|^2 \dx =: \ell \, \textbf{\textbf{RHS}}.
\end{align}
From now on, to simplify the presentation, we will use the notation
$$\mathcal{U}_{i,j}(h)=:
\mu^2 + \, |u_{x_i}(x+e_jh)|^2+|u_{x_i}(x) |^2,  
$$
and, when needed, we will abbreviate writing $\mathcal{U}_{j}$ in place of $\mathcal{U}_{j,j}$.  \\
By assumption \eqref{A4}, Young's and  H\"older's    inequalities,  Lemma \ref{ldiff} and the properties of $\eta$,
 we derive
\begin{align}\label{IntI3prima}
     |I_2| 
    & \leq c |h| \sum_{i=1}^{n}  \int_{\Omega} \eta^2 \, (g(x+he_j)+ g(x))\, \mathcal{U}_{i,j}(h)^\frac{p_i -1}{2}|\tau_{j, h}u_{x_i}|  \dx \notag \\
    & \leq \varepsilon \ \ \ \ {\sum_{i=1}^{n}\int_{B_t} \eta^2 \, \mathcal{U}_{i,j}(h)^\frac{p_i -2}{2}| \tau_{j,h} u_{x_i} |^2 \dx}  \notag \\
    & \qquad + c_{\varepsilon} |h|^2 \sum_{i=1}^{n}  \int_{B_t} \eta^2\, \mathcal{U}_{i,j}(h)^{\frac{p_i}{2}} \, (g(x+he_j)+ g(x))^2\,  \dx \notag \\
     & \leq \varepsilon \textbf{\textbf{RHS}} + c_{\varepsilon} |h|^{2}  \left( \int_{B_{R}} g(x)^{r} \dx \right)^{\frac{2}{r}}  \sum_{i=1}^{n}\left( \int_{B_{t}}  \mathcal{U}_{i,j}(h)^{\frac{p_i r}{2(r-2)}} \dx \right)^{\frac{r-2}{r}}\notag \\
     & \leq \varepsilon \textbf{\textbf{RHS}} + c_{\varepsilon} |h|^{2}  \left( \int_{B_{R}} g(x)^{r} \dx \right)^{\frac{2}{r}}  \sum_{i=1}^{n}\left( \int_{B_{t'}}  \left(\mu^2 +|u_{x_i} |^2 \, \right)^{\frac{p_i r}{2(r-2)}} \dx \right)^{\frac{r-2}{r}},
\end{align}
for a constant $\varepsilon>0$ that will be chosen later.\\
Note now that
$$\frac{(p_i+2)(r-2)}{p_ir}>1 \, \text{ iff } \, r>p_i+2,$$
that holds true by assumption \eqref{Ipotesir} for every $i=1, \dots,n$.
Hence it is legitimate to apply H\"older's inequality in the last term of \eqref{IntI3prima}, to infer 
\begin{align}\label{I3finaleapriori}
 |I_2| &\leq  \varepsilon \textbf{\textbf{RHS}} +    c_{\varepsilon} |h|^{2}   \left( \int_{B_{R}} g(x)^{r} \dx \right)^{\frac{2}{r}} \, \sum_{i=1}^{n}  \left( \int_{B_{t'}} \left(\mu^2 +|u_{x_i} |^2 \, \right)^{\frac{p_i+2}{2}} \dx \right)^{\frac{p_i}{p_i +2}} \notag \\
 & \leq \varepsilon \textbf{\textbf{RHS}}+ \beta |h|^2  \sum_{i=1}^{n}   \int_{B_{{t'}}} \left(\mu^2 +|u_{x_i} |^2 \, \right)^{\frac{p_i+2}{2}} \dx \notag\\
 &\qquad+  c_{\varepsilon, \beta} |h|^2 \sum_{i=1}^{n} \left( \int_{B_{R}} g(x)^{r} \, \dx \right)^{\frac{p_i+2}{r}} ,
\end{align}
where we used Young's inequality with exponents $ \frac{p_i +2}{p_i}$ and $\frac{p_i+2}{2}$ and $\beta >0$ will be chosen later.\\
 To handle the term $I_3$ we use  assumption \eqref{A3'} and \eqref{tauIntI2} as follows
\begin{align}\label{I2secondo}
            |I_3|
& \leq c \int_{ \Omega }  
    \sum_{i=1}^{n} 
        \left(\mu^2 + |u_{x_i}|^2\right)^{\frac{p_i-1}{2}}
        \left| \tau_{j,-h} \!\left( \eta \, \eta_{x_i} \, \tau_{j,h} u \right) \right|
   \dx\\\nonumber
  & \leq  \frac{c |h|} {(t - \rho)^2} \int_{ B_{t+\frac{t'-t}{2}} } \,\,  
    \sum_{i=1}^{n} 
        \left(\mu^2 + |u_{x_i}|^2\right)^{\frac{p_i-1}{2}}\bigl|\tau_{j,-h}u\bigr|\dx\\\nonumber
        &+\frac{c } {(t - \rho)} \int_{ B_t }   
    \sum_{i=1}^{n} 
        \left(\mu^2 + |u_{x_i}|^2\right)^{\frac{p_i-1}{2}} \bigl|\tau_{j,-h}\tau_{j,h}u\bigr|\dx\\\nonumber
        &=I_3'+I_3'',
  \end{align}
  where we used that $|h|<\frac{t'-t}{2}$
By H\"older's inequality, we get
\begin{equation}
|I_3'| \le \frac{c |h|}{(t-s)^2}  \sum_{i=1}^{n}\left(\int_{  B_{t+\frac{t'-t}{2}} }  
        \left(\mu^2 + |u_{x_i}|^2\right)^{\frac{p_i}{2}}\,\dx\right)^{\frac{p_i-1}{p_i}}
        \left(\int_{  B_{t+\frac{t'-t}{2}}  }  \left| \tau_{j,-h} u \right|^{p_i}\,\dx\right)^{\frac{1}{p_i}}
\end{equation}
At this point, if $i\le j$ one immediately gets
\begin{eqnarray}\label{iminoredij}
|I_3'| &\le& \frac{c |h|}{(t-s)^2}  \sum_{i=1}^{n}\left(\int_{ B_{t'}}  
        \left(\mu^2 + |u_{x_i}|^2\right)^{\frac{p_i}{2}}\,\dx\right)^{\frac{p_i-1}{p_i}}
        \left(\int_{  B_{t+\frac{t'-t}{2}} } \left| \tau_{j,-h} u \right|^{p_j}\,\dx\right)^{\frac{1}{p_j}}\cr\cr
        &\le& \frac{c |h|^2}{(t-s)^2}  \sum_{i=1}^{n}\left(\int_{B_{t'}}  
        \left(\mu^2 + |u_{x_i}|^2\right)^{\frac{p_i}{2}}\,\dx\right)^{\frac{p_i-1}{p_i}}
        \left(\int_{B_{t'}} \left|  u_{x_j} \right|^{p_j}\,\dx\right)^{\frac{1}{p_j}}\cr\cr
        &\le& \frac{c |h|^2}{(t-s)^2}  \sum_{i=1}^{n}\left(\int_{B_{t'}}  
        \left(\mu^2 + |u_{x_i}|^2\right)^{\frac{p_i}{2}}\,\dx\right)^{\frac{p_i-1}{p_i}}
        \sum_{j=1}^{n}\left(\int_{B_{t'}} \left|  u_{x_j} \right|^{p_j}\,\dx\right)^{\frac{1}{p_j}}\cr\cr
        &\le&\frac{c |h|^2}{(t-s)^2}  \sum_{i=1}^{n}\int_{B_{t'}}  
        \left(\mu^2 + |u_{x_i}|^2\right)^{\frac{p_i}{2}}\,\dx,
\end{eqnarray}
where, in the second line of estimate, we used Lemma \ref{ldiff}.
On the other hand, if $i> j$, \eqref{pi} implies in particular that
$$p_j<p_i<p_j+2$$
and so there exists $\vartheta\in (0,1)$ such that
$$\frac{1}{p_i}=\frac{\vartheta}{p_j}+\frac{1-\vartheta}{p_j+2}.$$
By the interpolation inequality we then have
\begin{eqnarray}
    \label{imaggioredij}
|I_3'| &\le& \frac{c |h|}{(t-s)^2}  \sum_{i=1}^{n}\left(\int_{ B_{t+\frac{t'-t}{2}}}  
        \left(\mu^2 + |u_{x_i}|^2\right)^{\frac{p_i}{2}}\,\dx\right)^{\frac{p_i-1}{p_i}}\cr\cr
        &&\cdot\left(\int_{ B_{t+\frac{t'-t}{2}}} \left| \tau_{j,-h} u \right|^{p_j}\,\dx\right)^{\frac{\vartheta}{p_j}}\left(\int_{ B_{t+\frac{t'-t}{2}}} \left| \tau_{j,-h} u \right|^{p_j+2}\,\dx\right)^{\frac{1-\vartheta}{p_j+2}}\cr\cr 
        &\le &\frac{c |h|^2}{(t-s)^2}  \sum_{i=1}^{n}\left(\int_{B_{t'}}  
        \left(\mu^2 + |u_{x_i}|^2\right)^{\frac{p_i}{2}}\,\dx\right)^{\frac{p_i-1}{p_i}}\cr\cr
        &&\cdot\left(\int_{B_{t'}} \left|  u_{x_j} \right|^{p_j}\,\dx\right)^{\frac{\vartheta}{p_j}}\left(\int_{B_{t'}} \left| u_{x_j} \right|^{p_j+2}\,\dx\right)^{\frac{1-\vartheta}{p_j+2}}\cr\cr 
        &\le &\beta |h|^2\int_{B_{t'}} \left| u_{x_j} \right|^{p_j+2}\,\dx\cr\cr
        &+&\sum_{i=1}^{n}\frac{c |h|^2}{(t-s)^{\frac{4p_i}{p_i+p_j}}}  \left(\int_{B_{t'}}  
        \left(\mu^2 + |u_{x_i}|^2\right)^{\frac{p_i}{2}}\,\dx\right)^{\frac{2(p_i-1)}{p_i+p_j}}\cr\cr
        &&\cdot\left(\int_{B_{t'}} \left|  u_{x_j} \right|^{p_j}\,\dx\right)^{\frac{p_j+2-p_i}{p_i+p_j}}\cr\cr 
        &\le &\beta |h|^2\sum_{i=1}^n\int_{B_{t'}} (\mu^2+\left| u_{x_i} \right|^2)^{\frac{p_i+2}{2}}\,\dx\cr\cr
        &+&\sum_{i=1}^{n}\frac{c |h|^2}{(t-s)^{\frac{4p_i}{p_i+p_j}}}  \left(\int_{B_{t'}}  
        \left(\mu^2 + |u_{x_i}|^2\right)^{\frac{p_i}{2}}\,\dx\right)^{\frac{2(p_i-1)}{p_i+p_j}}\cr\cr
        &&\cdot\left(\int_{B_{t'}} \left|  u_{x_j} \right|^{p_j}\,\dx\right)^{\frac{p_j+2-p_i}{p_i+p_j}} \notag \\
 & \leq& \beta |h|^2\sum_{i=1}^n\int_{B_{t'}} (\mu^2+\left| u_{x_i} \right|^2)^{\frac{p_i+2}{2}}\,\dx\cr\cr
        &+& \sum_{i=1}^{n}\frac{c |h|^2}{(t-s)^{\frac{4p_i}{p_i+p_j}}}  \int_{ B_{t'}}  
        \left(\mu^2 + |u_{x_i}|^2\right)^{\frac{p_i}{2}}\,\dx , 
\end{eqnarray}
where we used in turn Lemma \ref{ldiff}, Young's inequality with exponents $\frac{p_j+2}{1-\vartheta}, \frac{p_j+2}{p_j+1+\vartheta}$ and the explicit expression of $\vartheta=\frac{p_j(p_j+2-p_i)}{2p_i}$. Note that the right hand side of \eqref{imaggioredij} is finite by virtue of \eqref{highint}.
Joining \eqref{iminoredij} and \eqref{imaggioredij}, we get
\begin{align}\label{I3'completoo}
    I_3' &\leq \beta |h|^2\sum_{i=1}^n\int_{B_{t'}} (\mu^2+\left| u_{x_i} \right|^2)^{\frac{p_i+2}{2}}\,\dx \notag \\
        & \qquad + \frac{c |h|^2}{(t-s)^{\alpha'}}  \sum_{i=1}^{n}\int_{ B_{t'}}   
        \left(\mu^2 + |u_{x_i}|^2\right)^{\frac{p_i}{2}}\,\dx
\end{align}
with $\alpha'=\alpha'(p_i).$
Now,  we take care of $I_3''$. Thanks to    H\"older's inequality with  exponents $ \left(\frac{p_i+2}{p_i-1}, \frac{ p_i+2}{3}\right)$ and Lemma \ref{ldiff}, we have
\begin{eqnarray}\label{I3''split}
     I_{3}''&\leq &\frac{c}{t - s} 
    \sum_{i=1}^{n}\left( \int_{B_t} (\mu^2 + |u_{x_i}|^2)^\frac{p_i+2}{2} \, \dx \right)^{\!\frac{p_i-1}{p_i+2}}
   \left( \int_{B_{t}} |\tau_{j,-h}\tau_{j,h}u|^{\frac{p_i+2}{3}} \, \dx \right)^{\frac{3}{p_i+2}} \cr\cr
    &\leq & \frac{c |h|}{t - s} 
    \sum_{i=1}^{n} 
    \left( \int_{B_t} (\mu^2 + |u_{x_i}|^2)^\frac{p_i+2}{2} \, \dx \right)^{\!\frac{p_i-1}{p_i+2}}
    \left( \int_{B_{t'}} |\tau_{j,h}u_{x_j}(x)|^{\frac{p_i+2}{3}} \, \dx \right)^{\frac{3}{p_i+2}}\cr\cr 
    &=:&\frac{c |h|}{t - s} 
    \sum_{i=1}^{n} 
    \left( \int_{B_t} (\mu^2 + |u_{x_i}|^2)^\frac{p_i+2}{2} \, \dx \right)^{\!\frac{p_i-1}{p_i+2}}
\cdot \mathbf{J}.
\end{eqnarray}
The estimate of $\mathbf{J}$ depends on the following cases 
$$ \frac{p_i+2}{3}\le p_j \qquad\,\,  \text{and}\,\,\qquad p_j< \frac{p_i+2}{3}.$$
In the first case, 
since $u_{x_j} \in L^{p_j}_{\mathrm{loc}}(\Omega)$, using H\"older's inequality, we get  
\begin{align}
    \mathbf{J}
    &\leq c
    \left(\int_{B_{t'}} |\tau_{j,h}u_{x_j}(x)|^{p_j} \, \dx \right)^{\frac{1}{p_j}}. \notag
    \end{align}
    and so, by Lemma \ref{VpAndrea}, we  obtain
 \begin{eqnarray}\label{Jpjmin}
   \mathbf{J} &\leq& c
    \left(\int_{B_{t'}}|\tau_{j,h}V_{p_j}(u_{x_j})|^{p_j}
    \mathcal{U}_{j}(h)^{\frac{(2-p_j)p_j}{4}} 
    \dx \right)^{\frac{1}{p_j}} \notag \\
    &\leq&  c \left( \int_{B_{t'}} |\tau_{j,h}V_{p_j}(u_{x_j})|^{2} \, \dx \right)^{\!\frac{1}{2}}\left( \int_{B_{t'}} \mathcal{U}_{j}(h)^{\frac{p_j}{2}} 
    \dx \right)^{\!\frac{2-p_j}{2p_j}},
\end{eqnarray}
where we used  H\"older’s inequality with exponents $(\frac{2}{p_j}, \frac{2}{2-p_j})$, which is legitimate if $p_j <2$. Of course if $p_j=2$ we have $|\tau_{j,h} u_{x_j}|^2= |\tau_{j,h}\left(V_2(u_{x_j}) \right)|^2$.
\\
Suppose now that ${p_j< \frac{p_i+2}{3}}$. 
From the definition of $\mathbf{J}$ and since by \eqref{pi} it holds $\frac{p_i+2}{3}\le \frac43$, applying  Lemma \ref{VpAndrea}, we obtain
    \begin{eqnarray}\label{j2apri}
   |\mathbf{J}| &\leq& 
    \left( \int_{B_{t'}} |\tau_{j,h}V_{p_j}(u_{x_j})|^{\frac{p_i+2}{3}}
    \mathcal{U}_{j}(h)^{\frac{(2-p_j)(p_i+2)}{12}} 
    \dx \right)^{\frac{3}{p_i+2}} \notag \\
    &\leq &  c \left( \int_{B_{t'}} |\tau_{j,h}V_{p_j}(u_{x_j})|^{2}  \dx \right)^{\!\frac{1}{2}}
    \left( \int_{B_{t'}} \mathcal{U}_{j}(h)^{\frac{(2-p_j)(p_i+2)}{2(4-p_i)}} 
    \dx \right)^{\!\frac{4-p_i}{2(p_i+2)}}
\end{eqnarray}
where, in the last step, we used Hölder’s inequality with exponents $(\frac{6}{p_i+2}, \frac{6}{4-p_i})$.\\
To estimate the last term of \eqref{j2apri}
notice that, since 
\begin{equation}\label{Stima4I2}
 p_j<\frac{p_i+2}{3}< p_i<p_j+1<\frac{3}{2}p_j+1,
\end{equation}
it holds
$$ p_j <\frac{(2-p_j)(p_i+2)}{4-p_i} \leq p_j+2.$$ 
 Therefore, there exists $\theta \in (0,1)$ such that
\begin{equation}
   \frac{\theta}{p_j}+ \frac{1-\theta}{p_j+2} = \frac{4-p_i}{(2-p_j)(p_i+2)} ,
\end{equation}
and using  the interpolation inequality in the last term of \eqref{j2apri}, we get
\begin{align}\label{Jpjmagg}
    |\mathbf{J}| &\leq
   c \left( \int_{B_{t'}} |\tau_{j,h}V_{p_j}(u_{x_j})|^{2} \, \dx \right)^{\!\frac{1}{2}} \notag \\
    &\cdot\left( 
       \int_{B_{t'}}
        \mathcal{U}_j(h)^\frac{p_j+2}{2} 
        \, \dx  
    \right)^\frac{(1-\theta)(2-p_j)}{2(p_j+2)} \left( 
       \int_{B_{t'}}
       \mathcal{U}_j(h)^\frac{p_j}{2} 
        \, \dx  
    \right)^{\frac{\theta(2-p_j)}{2p_j}} .
\end{align}
Summing estimates \eqref{Jpjmin} and \eqref{Jpjmagg} we conclude that
\begin{align}\label{A+BdiJ}
   |\mathbf{J}| &\leq  c
    \left(\ \int_{B_{t'}} |\tau_{j,h}V_{p_j}(u_{x_j})|^{2} \, \dx\right)^{\frac12}\Bigg[\left( \int_{B_{t'}} \mathcal{U}_j(h)^{\frac{p_j}{2}} 
    \dx \right)^{\!\frac{2-p_j}{2p_j}} \notag\\
   & \qquad+\left( \int_{B_{t'}} \mathcal{U}_j(h)^{\frac{p_j}{2}} 
    \dx \right)^{\frac{\theta(2-p_j)}{2p_j}}\left( 
       \int_{B_{t'}}
        \mathcal{U}_j(h)^\frac{p_j+2}{2} 
        \, \dx  
    \right)^\frac{(1-\theta)(2-p_j)}{2(p_j+2)}\Bigg] \notag\\
    &\leq  c
    \left(\ \int_{B_{t'}} |\tau_{j,h}V_{p_j}(u_{x_j})|^{2} \, \dx\right)^{\frac12}\notag\\
    &\quad\cdot \Bigg[\left( \int_{B_{t'}} \mathcal{U}_j(h)^{\frac{p_j}{2}} 
    \dx \right)^{\!\frac{2-p_j}{2p_j}} +\left( 
       \int_{B_{t'}}
        \mathcal{U}_j(h)^\frac{p_j+2}{2} 
        \, \dx  
    \right)^\frac{(2-p_j)}{2(p_j+2)}\Bigg],
\end{align}
where, in the last line, we used Young's inequality with exponents $\left(\frac{1}{\theta},\frac{1}{1-\theta}\right)$.
{Inserting \eqref{A+BdiJ} in \eqref{I3''split} and
using Young's inequality twice, we have}
\begin{align*}
    I_{3}''&\leq \frac{c |h|}{t - s} 
    \sum_{i=1}^{n} 
    \left( \int_{B_t} (\mu^2 + |u_{x_i}|^2)^\frac{p_i+2}{2} \, \dx \right)^{\frac{p_i-1}{p_i+2}}\left(\ \int_{B_{t'}} |\tau_{j,h}V_{p_j}(u_{x_j})|^{2} \, \dx\right)^{\frac12} \notag\\
   &\qquad \cdot\left[\left( \int_{B_{t'}} \mathcal{U}_j(h)^{\frac{p_j}{2}} 
    \dx \right)^{\!\frac{2-p_j}{2p_j}}+\left( 
       \int_{B_{t'}}
        \mathcal{U}_j(h)^\frac{p_j+2}{2} 
        \, \dx  
    \right)^\frac{(2-p_j)}{2(p_j+2)}\right] \notag\\ 
    &\leq \sigma \int_{B_{t'}} |\tau_{j,h}V_{p_j}(u_{x_j})|^{2} \, \dx+\frac{c_\varepsilon |h|^2}{(t - s)^2} 
    \sum_{i=1}^{n} 
    \left( \int_{B_t} (\mu^2 + |u_{x_i}|^2)^\frac{p_i+2}{2} \, \dx \right)^{\frac{2(p_i-1)}{p_i+2}}  \notag\\
   &\qquad\cdot \left[ \left( \int_{B_{t'}} \mathcal{U}_j(h)^{\frac{p_j}{2}} 
    \dx \right)^{\!\frac{2-p_j}{p_j}} 
    +\left( 
       \int_{B_{t'}}
        \mathcal{U}_j(h)^\frac{p_j+2}{2} 
        \, \dx  
    \right)^\frac{2-p_j}{p_j+2} \, \right]  \notag\\
    &\leq \sigma \int_{B_{t'}} |\tau_{j,h}V_{p_j}(u_{x_j})|^{2} \, \dx+\beta |h|^2  \sum_{i=1}^{n} \int_{B_t} (\mu^2 + |u_{x_i}|^2)^\frac{p_i+2}{2} \, \dx\notag\\&\qquad+\sum_{i=1}^{n}\frac{c_{\varepsilon,\beta} |h|^2}{(t - s)^{\frac{2(p_i+2)}{4-p_i}}} \left( \int_{B_{t'}} \mathcal{U}_j(h)^{\frac{p_j}{2}} 
    \dx \right)^{\!\frac{(2-p_j)(p_i+2)}{p_j(4-p_i)}}\notag\\
    &\qquad+\sum_{i=1}^{n}\frac{c_{\varepsilon,\beta} |h|^2}{(t - s)^{\frac{2(p_i+2)}{4-p_i}}}\left( 
       \int_{B_{t'}}
        \mathcal{U}_j(h)^\frac{p_j+2}{2} 
        \, \dx  
    \right)^\frac{(2-p_j){(p_i+2)}}{(p_j+2){(4-p_i)}},
 \end{align*}
 where $\sigma,\beta>0$ are as before.
 Since $$\frac{(2-p_j)(p_i+2)}{(p_j+2)(4-p_i)}<1\,\,\Longleftrightarrow\,\, p_i<\frac{3}{2}p_j+1$$
 which holds by  \eqref{crescitapi}, we may use Young's inequality again to derive
 \begin{eqnarray}\label{estI3''}
    I_{3}''&\leq& \sigma \int_{B_{t'}} |\tau_{j,h}V_{p_j}(u_{x_j})|^{2} \, \dx+\beta |h|^2  \sum_{i=1}^{n} \int_{B_t} (\mu^2 + |u_{x_i}|^2)^\frac{p_i+2}{2} \, \dx\cr\cr&&\qquad+\beta |h|^2\int_{B_{t'}}
        \mathcal{U}_j(h)^\frac{p_j+2}{2} 
        \, \dx +\sum_{i=1}^{n}\frac{c_{\varepsilon,\beta} |h|^2}{(t - s)^{\frac{2(p_i+2)}{4-p_i}}} \left( \int_{B_{t'}} \mathcal{U}_j(h)^{\frac{p_j}{2}} 
    \dx \right)^{\!\frac{(2-p_j)(p_i+2)}{p_j(4-p_i)}}\cr\cr&&\qquad + \frac{c_{\varepsilon,\beta} |h|^2}{(t - s)^{2\gamma}} \cr\cr
        &\leq& \sigma \int_{B_{t'}} |\tau_{j,h}V_{p_j}(u_{x_j})|^{2} \, \dx+2\beta |h|^2  \sum_{i=1}^{n} \int_{B_{\rho'}} (\mu^2 + |u_{x_i}|^2)^\frac{p_i+2}{2} \, \dx\cr\cr&&\quad+\frac{c_{\varepsilon,\beta} |h|^2}{(t - s)^{2\alpha}}\sum_{i=1}^{n} \left( \int_{B_{t'}} \mathcal{U}_j(h)^{\frac{p_j}{2}} 
    \dx \right)^{\!\frac{(2-p_j)(p_i+2)}{p_j(4-p_i)}} + \frac{c_{\varepsilon,\beta} |h|^2}{(t - s)^{2\gamma}},
 \end{eqnarray}
 where we used Lemma \ref{ldiff} and $\alpha=\alpha(p_i)$ and $\gamma=\gamma(p_i)$.
Inserting estimates \eqref{I3'completoo} and \eqref{estI3''} in \eqref{I2secondo}, we conclude that
 \begin{align}\label{estI3}
    |I_{3}|&\leq \sigma \int_{B_{t'}} |\tau_{j,h}V_{p_j}(u_{x_j})|^{2} \, \dx+3\beta |h|^2   \sum_{i=1}^{n} \int_{B_{\rho'}} (\mu^2 + |u_{x_i}|^2)^\frac{p_i+2}{2} \, \dx \notag\\
   &\qquad +\frac{c_{\varepsilon,\beta} |h|^2}{(t - s)^{2\alpha}} \sum_{i=1}^{n}\left( \int_{B_{t'}} \mathcal{U}_j(h)^{\frac{p_j}{2}} 
    \dx \right)^{\!\frac{(2-p_j)(p_i+2)}{p_j(4-p_i)}} + \frac{c_{\beta} |h|^2}{(t - s)^{2\gamma}}  \notag\\
   &\qquad+\frac{c_{\beta} |h|^2}{(t - s)^{2\alpha'}}   \int_{B_{t'}} \mathcal{U}_j(h)^{\frac{p_j}{2}} 
    \dx \notag \\
    &\leq \sigma \int_{B_{t'}} |\tau_{j,h}V_{p_j}(u_{x_j})|^{2} \, \dx+3\beta |h|^2   \sum_{i=1}^{n} \int_{B_{\rho'}} (\mu^2 + |u_{x_i}|^2)^\frac{p_i+2}{2} \, \dx \notag\\
   &\qquad +\frac{c_{\varepsilon,\beta} |h|^2}{(t - s)^{2\alpha''}} \left( \int_{B_{t'}} \mathcal{U}_j(h)^{\frac{p_j}{2}} 
    \dx \right)^{\!\tilde{\alpha}} + \frac{c_{\beta} |h|^2}{(t - s)^{2\gamma}} 
\end{align}
where $\tilde{\alpha}=\tilde{\alpha}(p_i)$ and $\alpha''=\alpha''(p_i)$ .\\
Inserting \eqref{I1}, \eqref{I3finaleapriori} and \eqref{estI3} in \eqref{SommaInt}, we get
\begin{eqnarray*}
 \ell\, \textbf{\textbf{RHS}} &\le&  \varepsilon \textbf{\textbf{RHS}} + 4\beta |h|^2  \sum_{i=1}^{n}   \int_{B_{{t'}}} \mathcal{U}_i(h)^{\frac{p_i+2}{2}} \dx+\frac{c_{\varepsilon,\beta} |h|^2}{(t - s)^{2\alpha''}} \left( \int_{B_{t'}} \mathcal{U}_j(h)^{\frac{p_j}{2}} 
    \dx \right)^{\!\tilde{\alpha}} \cr\cr
 &&\quad+ \sigma \int_{B_{t'}} |\tau_{j,h}V_{p_j}(u_{x_j})|^{2} \, \dx \frac{c_\beta |h|^2} {(t - s)^{2\tilde{\gamma}}}\left(\int_{B_{R}}(1+g(x))^r\dx\right)^{\frac{\tilde{\gamma}}{r}} ,
 \end{eqnarray*}
 where in the last integral we also included the contribution of $ \frac{c_{\varepsilon,\beta} |h|^2}{(t - s)^{2\gamma}}$.\\
Choosing $\varepsilon = \frac{\ell}{2}$, we can reabsorb the first integral in the right-hand side  by the left-hand side thus getting
\begin{eqnarray}\label{Formulacompleta}
 \frac{\ell}{2}\, \textbf{\textbf{RHS}} &\le&   4\beta |h|^2  \sum_{i=1}^{n}   \int_{B_{{t'}}} \mathcal{U}_i(h)^{\frac{p_i+2}{2}} \dx+\frac{c_{\varepsilon,\beta} |h|^2}{(t - s)^{2\alpha''}} \left( \int_{B_{t'}} \mathcal{U}_j(h)^{\frac{p_j}{2}} 
    \dx \right)^{\!\tilde{\alpha}} \cr\cr
 &&\quad+ \sigma \int_{B_{t'}} |\tau_{j,h}V_{p_j}(u_{x_j})|^{2} \, \dx \frac{c_\beta |h|^2} {(t - s)^{2\tilde{\gamma}}}\left(\int_{B_{R}}(1+g(x))^r\dx\right)^{\frac{\tilde{\gamma}}{r}},
 \end{eqnarray}
where we also used Lemma \ref{ldiff}. Recalling the definition of $ \textbf{\textbf{RHS}}$, \eqref{Formulacompleta} can be written as follows
\begin{eqnarray}\label{RHScompleto}
 \frac{\ell}{2}\,\sum_{i=1}^n \int_{B_{s}} |\tau_{j,h}V_{p_i}(u_{x_i})|^{2} \, \dx &\le&  \sigma \sum_{i=1}^{n}\int_{B_{t'}} |\tau_{j,h}V_{p_i}(u_{x_i})|^{2} \, \dx \notag \\
  &&\quad+ 4\beta |h|^2  \sum_{i=1}^{n}   \int_{B_{{\rho'}}} (\mu^2+|u_{x_i}|^2)^{\frac{p_i+2}{2}} \dx \notag \\
 &&\quad +\frac{c_{\beta} |h|^2}{(t - s)^{2\alpha''}} \left( \int_{B_{\rho'}} (\mu^2+|u_{x_j}|^2)^{\frac{p_j}{2}} 
    \dx \right)^{\!\tilde{\alpha}}\notag\\
   &&\quad  +\frac{c_\beta |h|^2} {(t - s)^{2\tilde{\gamma}}}\left(\int_{B_{R}}(1+g(x))^r\dx\right)^{\frac{\tilde{\gamma}}{r}},
 \end{eqnarray}
 that holds true for every $t \in (s,t')$  with constants independent of $s,t,t'$. Hence,  choosing $t= s+\frac{t'-s}{2}$ and $\sigma= \frac{\ell}{4}$, \eqref{RHScompleto} becomes
 \begin{eqnarray}\label{RHScompleto2}
 \frac{\ell}{2}\,\sum_{i=1}^n\int_{B_{s}} |\tau_{j,h}V_{p_i}(u_{x_i})|^{2} \, \dx &\le&  \frac{\ell}{4} \sum_{i=1}^{n}\int_{B_{t'}} |\tau_{j,h}V_{p_i}(u_{x_i})|^{2} \, \dx \notag \\
  &&\quad+ 4\beta |h|^2  \sum_{i=1}^{n}   \int_{B_{{\rho'}}} (\mu^2+|u_{x_i}|^2)^{\frac{p_i+2}{2}} \dx \notag \\
 &&\quad +\frac{c_{\beta} |h|^2}{(t' - s)^{2\alpha''}} \left( \int_{B_{\rho'}} (\mu^2+|u_{x_j}|^2)^{\frac{p_j}{2}} 
    \dx \right)^{\!\tilde{\alpha}}\notag\\
   &&\quad  +\frac{c_\beta |h|^2} {(t' - s)^{2\tilde{\gamma}}}\left(\int_{B_{R}}(1+g(x))^r\dx\right)^{\frac{\tilde{\gamma}}{r}}.
 \end{eqnarray}
 Since previous estimate holds true for every $\varrho<s<t'<\rho'$, we may use the iteration Lemma with the function
 \begin{eqnarray*}
     \phi(s)= \sum_{i=1}^n  \int_{B_{s}} |\tau_{j,h}V_{p_i}(u_{x_i})|^{2} \, \dx,
 \end{eqnarray*}
  thus getting
\begin{equation}\label{2phiRhs}
    \phi \left(\varrho \right) \leq  \frac{\tilde{C}_1}{(\rho'-\varrho)^{2\alpha''}} + \frac{\tilde{C}_2}{(\rho'-\varrho)^{2\tilde{\gamma}}}+\tilde{C_3}.
    \end{equation}
   Recalling the definition of $\phi(t)$ and  by Lemma \ref{Lemmahzero}, \eqref{2phiRhs} yields that 
\begin{align}\label{sommauxij}
 &    \sum_{i=1}^{n}   \left( \int_{B_{\varrho}} |u_{x_i x_j}|^2\left( \mu^2+|u_{x_i}|^2 \right)^\frac{p_i-2}{2} \dx \right)\le  c\beta   \sum_{i=1}^{n}   \int_{B_{{\rho'}}} (\mu^2+|u_{x_i}|^2)^{\frac{p_i+2}{2}} \dx \notag \\
    &\qquad\qquad+\frac{c_{\beta} }{(\rho'-\varrho)^{2\alpha''}} \left( \int_{B_{{\rho'}}} (\mu^2+|u_{x_j}|^2)^{\frac{p_j}{2}} 
    \dx \right)^{\tilde{\alpha}} \notag \\
   &\qquad\qquad+  \frac{c_\beta } {(\rho'-\varrho)^{2\tilde{\gamma}}}\left(\int_{B_{R}}(1+g(x))^r\dx\right)^{\frac{\tilde{\gamma}}{r}},
\end{align}
Choosing $j=i$, \eqref{sommauxij}  implies 
\begin{align*}
 &       \int_{B_{\varrho}} |u_{x_i x_i}|^2\left( \mu^2+|u_{x_i}|^2 \right)^\frac{p_i-2}{2} \dx \le  c\beta   \sum_{i=1}^{n}   \int_{B_{{{\rho'}}}} (\mu^2+|u_{x_i}|^2)^{\frac{p_i+2}{2}} \dx \notag \\
    &\qquad\qquad+\frac{c_{\beta} }{(\rho'-\varrho)^{2\alpha''}} \left( \int_{B_{{\rho'}}} (\mu^2+|u_{x_i}|^2)^{\frac{p_i}{2}} 
    \dx \right)^{\tilde{\alpha}} \notag \\
   &\qquad\qquad+  \frac{c_\beta } {(\rho'-\varrho)^{2\tilde{\gamma}}}\left(\int_{B_{R}}(1+g(x))^r\dx\right)^{\frac{\tilde{\gamma}}{r}},
\end{align*}
for every $i=1, \dots, n$.
Hence, summing over $i=1,\dots, n$, we infer
\begin{align}\label{sommasuj}
     \sum_{i=1}^n   & \int_{B_{{\varrho}} } |u_{x_i x_i}|^2\left( \mu^2+|u_{x_i}|^2 \right)^{\frac{p_i-2}{2}} \dx 
 \leq cn\beta   \sum_{i=1}^{n}   \int_{B_{{\rho'}}} (\mu^2+|u_{x_i}|^2)^{\frac{p_i+2}{2}} \dx \notag \\
   &\qquad\qquad+\frac{c_{\beta} }{(\rho'-\varrho)^{2\alpha''}} \left( \int_{B_{{\rho'}}} (\mu^2+|u_{x_i}|^2)^{\frac{p_i}{2}} 
    \dx \right)^{\tilde{\alpha}} \notag \\
   &\qquad\qquad+  \frac{c_\beta } {(\rho'-\varrho)^{2\tilde{\gamma}}}\left(\int_{B_{R}}(1+g(x))^r\dx\right)^{\frac{\tilde{\gamma}}{r}}. 
 \end{align}
Now, recall that 
$u_{x_i} \in L^{p_i+2}_{\mathrm{loc}}(\Omega )$
 for every $i=1,\dots, n$ and the estimate
 \begin{align}\label{TesiProprof}
     \sum_{i=1}^{n} \int_{B_{{\rho'}}} \left( \mu^2 +|u_{x_i}|^2 \right)^\frac{p_i+2}{2} \dx &\leq c \sum_{i=1}^{n} \, \int_{B_{R'}} \left( \mu^2 + |u_{x_i}|^2 \right)^\frac{p_i-2}{2} |u_{x_ix_i}|^2  \dx\notag \\
     &\quad+  \frac{c}{(R'-{\rho'})^{\delta}} \sum_{i=1}^{n} \int_{B_{R'}} \left( \mu^2 + |u_{x_i}|^2 \right)^\frac{p_i}{2} \dx 
   \end{align}
   holds with constants  $c=c(n, p_i,||u||^2_{\infty}  )$ and $\delta=\delta(p_i).$
    Using \eqref{TesiProprof} to estimate the first term in the right hand side of \eqref{sommasuj}, we find 
   \begin{align}\label{uxipi+2}
     \sum_{i=1}^n &   \int_{B_{{\varrho}}}  |u_{x_i x_i}|^2\left( \mu^2+|u_{x_i}|^2 \right)^{\frac{p_i-2}{2}} \dx  
 \leq   c\beta  \sum_{i=1}^{n} \int_{B_{R'}}   |u_{x_i x_i}|^2\left( \mu^2+|u_{x_i}|^2 \right)^{\frac{p_i-2}{2}} \dx \notag \\
 & \qquad +\frac{c_{\beta} }{({\rho'}-{\varrho})^{2\alpha''}}\sum_{i=1}^n \left( \int_{B_{{R}}} (\mu^2+|u_{x_i}|^2)^{\frac{p_i}{2}}\dx \right)^{\tilde{\alpha}}+  \frac{c}{(R'-{\rho'})^{\delta}} \sum_{i=1}^{n} \int_{B_{{R}}} \left( \mu^2 + |u_{x_i}|^2 \right)^\frac{p_i}{2} \dx\notag\\
 &\qquad+\frac{c_\beta } {({\rho'}-{\varrho})^{2\tilde{\gamma}}}\left(\int_{B_{R}}(1+g(x))^r\dx\right)^{\frac{\tilde{\gamma}}{r}} .
 \end{align}
 Since previous estimate holds for all radii $\rho< \varrho <\rho'<R'<R$ with constants independent of the radii, in particular it applies  choosing
   $$\rho'= \varrho +  \frac{4(R'-\varrho)}{5}, $$
  so that $\rho'-\varrho=\frac{4(R'- \varrho)}{5}$ and $R'-\rho'=\frac{(R'- \varrho)}{5}$.   Hence, 
choosing $\beta >0$ such that $ c\beta =\frac12$ and setting  $$ \varphi(\rho')= \sum_{i=1}^n   \left( \int_{B_{\rho'}} |u_{x_ix_i}|^2 \left( \mu^2 + |u_{x_i}|^2 \right)^\frac{p_i-2}{2}  \dx\right), $$
inequality \eqref{uxipi+2} can be written as follows
    \begin{equation*}\label{phirho}
\varphi(\varrho) \leq  \frac12   \varphi(R')+  \frac{{C_{1}}}{(R'-\varrho)^{2\alpha''}} + \frac{{C_{2}}}{(R'-\varrho)^{\delta}}+  \frac{C_3}{(R'-\varrho)^{2\tilde{\gamma}}}.
\end{equation*}
 that holds  for every $\rho<\varrho<\rho'<R'< R$.  Lemma \ref{lm3} implies
\begin{equation*}\label{phiR}
    \phi \left(\rho \right) \leq  \frac{\tilde{C}_1}{(R-\rho)^{2\alpha''}} + \frac{\tilde{C}_2}{(R-\rho)^{\delta}}+\frac{\tilde{C}_3}{(R-\rho)^{2\tilde{\gamma}}}.
    \end{equation*}
   i.e.
    \begin{align*}
        \sum_{i=1}^{n}   \left( \int_{B_{\rho}} \left( \mu^2 +|u_{x_i}|^2 \right)^\frac{p_i-2}{2}  |u_{x_i x_i}|^2 \dx \right) & \leq c  \left( \sum_{i=1}^{n} \Vert u_{x_i} \Vert_{L^{p_i}(B_{R})} + \Vert g \Vert_{L^r (B_{R})} \right)^\sigma ,
\end{align*}
where $c=c(p_i,n,\ell, L, \rho,R, ||u||_{\infty})$ and $\sigma=\sigma(n,p_i)$.
Putting previous estimate  in \eqref{TesiProprof}, we also derive
\begin{align}\label{pi+2}
 \sum_{i=1}^n \int_{B_{\rho}} \left( \mu^2 + |u_{x_i}|^2 \right)^\frac{p_i+2}{2} \dx & \leq c \left( \sum_{i=1}^{n} \Vert u_{x_i} \Vert_{L^{p_i}(B_{R})} + \Vert g \Vert_{L^r (B_{R})} \right)^\sigma  , 
\end{align}
for every pair of concentric balls $B_\rho\subset B_R\Subset \Omega$ and where $c = c(p_i,n,\ell, L, \rho,R, ||u||_{\infty})$ and $\sigma= \sigma (n,p_i)$ are positive constants.\\
Now, using \eqref{pi+2} in  the right hand side of \eqref{sommauxij}, we get 
\begin{align}\label{Stimacompattauxixjpi+2}
        \sum_{i=1}^{n}   \left( \int_{B_{\rho}} \left( \mu^2 +|u_{x_i}|^2 \right)^\frac{p_i-2}{2}|u_{x_ix_j}|^2 \dx \right) & \leq c  \left( \sum_{i=1}^{n} \Vert u_{x_i} \Vert_{L^{p_i}(B_{R})} + \Vert g \Vert_{L^r (B_{R})} \right)^\sigma,
\end{align}
for every $i=1, \dots, n$ and  for every $j=1, \dots, n$
and this concludes the proof.
\end{proof}

\section{Approximation}\label{Appro}
In this section, we complete the proof of Theorem \ref{thmPrincipale} by using the a priori estimate of Theorem \ref{AppThm} together with a classical approximation argument. The computations follow those in \cite{Russo} and are included here for completeness.\\
Fix a non-negative smooth kernel $\phi \in \mathcal{C}^\infty_0(B_1(0))$ such that $\int_{B_1(0)} \phi =1$ and consider the corresponding family of mollifiers $(\phi_{\delta})_{\delta >0}$. We consider 
$$f^\delta (x,\xi)=\int_{B_1(0)} f(x+\delta y,\xi)\phi(y)\,dy,$$
 and, for a fixed ball $B_R\Subset\Omega$,  introduce the corresponding  integral functionals $$\mathcal{F}^{{\delta}}(u,B_R):=\int_{_{B_R} }f^{{\delta}}(x, \xi) \dx$$ and
$$\mathcal{F}^{{\delta}}_{\varepsilon}(u,B_R):= \, \int_{B_R} \left( f^{{\delta}}(x,\xi)+ \varepsilon(1+ |\xi|^2)^{\frac{p_n}{2}} \right) \dx.$$
It is easy to check that $f^{{\delta}}(x, \xi)$ satisfies assumptions \eqref{crescitapi}, \eqref{A0}\mbox{---}\eqref{A3'}. Moreover  it satisfies \eqref{A'4}, with $K = \sup_{B_R} |g_{{\delta}}(x)| $ where $g_{{\delta}} =g\star\phi_{{\delta}}$ is the mollification of $g$ with the sequence of mollifiers $\phi_{{\delta}}$.
\\
We now consider the following variational problems
\begin{equation}\label{Pfj}
    {\min} \left\{  \int_{B_R} \left( f^{{\delta}}(x,Dv)+ \varepsilon(1+ |Dv|^2)^{\frac{p_n}{2}} \right) \dx \ : \ v \in W^{1,p_n}_0(B_R)+u^\eta \right\},
\end{equation}
where $$u^\eta = u \star \varsigma_\eta$$ is the mollification of the local minimizer $u$ of \eqref{functional}, with a sequence of mollifier that we denoted by $\varsigma_n$.\\
It is well known that, by the direct methods of the calculus of variations, there exists a unique solution $u^\eta_{{\delta},\varepsilon} \in  W^{1,p_n}_0(B_R)+u^\eta $ of  problem \eqref{Pfj}. Since the integrand $f^{{\delta}}(x,Dv)+ \varepsilon(1+ |Dv|^2)^{\frac{p_n}{2}}$ satisfies the assumptions of  Lemma \ref{LemmaInduzione}, we have that $$V_{p_i}((u^\eta_{{\delta},\varepsilon})_{x_i}) \in W_{\mathrm{loc}}^{1,2}(B_R),$$
for every $\eta, \delta$ and $\varepsilon>0. $ Hence we are legitimate to use estimate \eqref{uxistima} of Theorem \ref{AppThm}, to obtain 
\begin{align*}
    \sum_{i=1}^{n}  &\left(  \int_{B_{\rho}} \left( \mu^2 +|(u^\eta_{{\delta},\varepsilon})_{x_i}|^2 \right)^\frac{p_i +2 }{2} \dx \right) \\
      &\leq c \left( { \sum_{i=1}^{n} \int_{B_R}\left( \mu^2 +|(u^\eta_{{\delta},\varepsilon})_{x_i}|^2 \right)^\frac{p_i }{2} \, \dx }+ \Vert g_{{\delta}} \Vert_{L^r (B_{R})} \right)^\sigma \\
    & \le c  \left(  \int_{B_R} f^{{\delta}}(x,D u^\eta_{{\delta},\varepsilon}) \dx+ \Vert g_{{\delta}} \Vert_{L^r (B_{R})} \right)^\sigma \\
    & \leq c  \left(  \int_{B_R} \left(f^{{\delta}}(x,D u^\eta_{{\delta},\varepsilon}) + \varepsilon(1+|Du^\eta_{{\delta},\varepsilon}|^2)^{\frac{p_n}{2}} \right) \dx+ \Vert g_{{\delta}} \Vert_{L^r (B_{R})} \right)^\sigma  ,
\end{align*}
where  $c$ is a positive constant that is independent on $\varepsilon, {\delta}$ and $\eta$ and {where  we used assumption \eqref{crescitapi}}. Using  the minimality of $u^\eta_{{\delta},\varepsilon}$ in the right-hand side of previous estimate, we infer that
\begin{align}\label{stimaukeni}
    &\sum_{i=1}^{n} \left(  \int_{B_{\rho}}\left( \mu^2 +| (u^\eta_{{\delta},\varepsilon})_{x_i}|^2 \right)^\frac{p_i +2 }{2} \dx \right) \notag \\
     & \leq c  \left(  \int_{B_R} \left(f^{{\delta}}(x,D u^\eta) + \varepsilon(1+|Du^\eta|^2)^{\frac{p_n}{2}} \right) \dx+ \Vert g_{{\delta}} \Vert_{L^r (B_{R})} \right)^\sigma,
\end{align}
where $c$ is a positive constant independent  of $\varepsilon,\eta,{\delta}$.\\
    Since $u^\eta \in W^{1,p_n}(B_R)$, the right-hand side of the previous estimate is finite and therefore the left-hand side  is uniformly bounded  with respect to $\varepsilon$, for $\varepsilon \in (0,1)$. Therefore, there exists $v^\eta_k$ such that, up to a not relabeled subsequence,
    $$u^\eta_{{\delta},\varepsilon} \rightharpoonup v^\eta_{{\delta}} \text{  weakly in  } W^{1, \textbf{p}+2}(B_R) \, \text{ as  } \varepsilon \to 0.$$
By the weak lower semicontinuity of the norm, we get
\begin{align}\label{stimaconv1}
    \sum_{i=1}^{n}& \left(  \int_{B_{\rho}} {\left( \mu^2 +|(v^\eta_{{\delta}})_{x_i}|^2 \right)^\frac{p_i +2 }{2} } \dx \right) \notag \\& \leq \liminf_{\varepsilon}\sum_{i=1}^{n} \int_{B_{\rho}}\left( { \left( \mu^2 +|(u^\eta_{{\delta},\varepsilon})_{x_i}|^2 \right)^\frac{p_i +2 }{2} \dx }\right) \notag \\
    &\le c  \left(  \int_{B_R}f^{{\delta}}(x,D u^\eta)  \dx+ \Vert g_{{\delta}} \Vert_{L^r (B_{R})} \right)^\sigma ,
\end{align}
where we used \eqref{stimaukeni} and that, since $u^\eta \in W^{1, p_n}(\Omega)$, it holds 
\begin{equation}\label{fkappalim}
\liminf_{\varepsilon \to 0} \, \varepsilon \int_{B_R}(1+|Du^\eta|^2)^{\frac{p_n}{2}} \dx =0. 
\end{equation}
 {Since $f(x,Du^\eta)\in L^1(B_R)$ and $g\in L^r(B_R)$, by the properties of the mollifications, it hold
\begin{equation}\label{fktoF}
   \lim_{\delta\to 0^+}\int_{B_R} f^{\delta}(x, Du^\eta) \dx = \int_{B_R}f(x,Du^\eta) \dx.
    \end{equation}
    and
    \begin{equation}\label{gktog}g_{\delta} \to g\quad \text{in}\quad L^r(B_R).\end{equation}}
 So, by \eqref{fktoF} and \eqref{gktog}, the right-hand side of \eqref{stimaconv1} is uniformly bounded with respect to {$\delta$}. Therefore, there exists $v^\eta$ such that, up to a subsequence,
    $$v^\eta_{{\delta}} \rightharpoonup v^\eta \text{  weakly in  } W^{1, \textbf{p}+2}(B_R) \, \text{ as  } {\delta} \to 0.$$
Again by the weak lower semicontinuity of the norm, \eqref{stimaconv1} and \eqref{fktoF}, we get
\begin{align}\label{stimaconv2}
    &\sum_{i=1}^{n} \left(  \int_{B_{\rho}} {\left( \mu^2 +| (v^\eta)_{x_i}|^2 \right)^\frac{p_i+2}{2} } \dx \right)\notag \\ & \leq \liminf_{{\delta}\to 0}\sum_{i=1}^{n} \left(  \int_{B_{\rho}} {\left( \mu^2 +| (v^\eta_{{\delta}})_{x_i}|^2 \right)^\frac{p_i+2}{2} } \dx \right) \notag \\
    &\leq c  \left(  \int_{B_R}f(x,D u^\eta)  \dx+ \Vert g \Vert_{L^r (B_{R})} \right)^\sigma .
\end{align}
Since, by the properties of mollification, it holds 
$$u^\eta_{x_i} \to u_{x_i}  \text{ strongly in }L^{p_i},$$
 by assumption \eqref{A3'} we have
\begin{equation}\label{IntfDuni}
   \lim_{\eta \to 0^+} \int_{B_R}f(x, Du^\eta)\dx =  \int_{B_R}f(x,Du) \dx.
    \end{equation}
 By virtue of \eqref{IntfDuni},  the right-hand side of \eqref{stimaconv2} can be uniformly bounded w.r.t. $\eta$. Therefore there exists $v$ such that, up to a subsequence,
    $$v^\eta \rightharpoonup v \text{  weakly in  } W^{1, \textbf{p}+2}(B_R) \, \text{ as  } \eta \to 0.$$
Once again, by the weak lower semicontinuity of the norm, we derive
\begin{align}\label{stimaconv3}
  { \sum_{i=1}^{n} \left(  \int_{B_{\rho}}\left( \mu^2+|v_{x_i}|^2 \right)^\frac{p_i+2}{2} \dx \right)} & \leq {\liminf_{\eta}\sum_{i=1}^{n} \left(  \int_{B_{\rho}}\left( \mu^2+|(v^\eta)_{x_i}|^2 \right)^\frac{p_i+2}{2} \dx \right)} \notag \\
    & \leq \lim_{\eta}  c  \left(  \int_{B_R}f(x,D u^\eta)  \dx+ \Vert g \Vert_{L^r (B_{R})} \right)^\sigma  \notag \\
    &= c  \left( \int_{B_R}f(x,D u)  \dx+ \Vert g \Vert_{L^r (B_{R})} \right)^\sigma. 
\end{align}
In order to conclude the proof, it suffices to show that $u=v$ a.e. in $B_R$.\\
By the weak lower semicontinuity of the functionals  $\mathcal{F}$ and $\mathcal{F}^{\delta}$ in $W^{1, \textbf{p}}(B_R)$ , the weak convergence of $v^\eta \rightharpoonup v$ in $W^{1, \textbf{p}+2}(B_R)$, \eqref{fktoF} and $v^\eta_\delta \rightharpoonup v^\eta$ in $W^{1, \textbf{p}+2}(B_R)$, we get
\begin{align}
    \int_{B_R}f(x, Dv) \dx & \leq \liminf_{\eta} \int_{B_R} f(x, Dv^\eta) \dx \leq \liminf_{\eta} \liminf_{{\delta}} \int_{B_R} f^{{\delta}}(x, Dv^\eta) \dx \notag \\   
    & \leq \liminf_{\eta}\liminf_{{\delta}} \int_{B_R} f^{{\delta}}(x, Dv^\eta_{{\delta}}) \dx \notag \\
     & \leq \liminf_{\eta}\liminf_{{\delta}} \int_{B_R} \left( f^{{\delta}}(x, Dv^\eta_{{\delta}}) )+ \varepsilon(1+ |Du^\eta_{\delta}|^2)^{\frac{p_n}{2}}   \right)\dx, \notag 
    \end{align}
   where the last inequality is trivial, by the non-negativity of the last term.\\
    Thanks to the weak convergence of $u^\eta_{{\delta}, \varepsilon} \rightharpoonup v^\eta_{{\delta}}$ in $W^{1, \textbf{p}+2}(B_R)$ and again the weak lower semicontinuity of functional $\mathcal{F}^{{\delta}}_\varepsilon$, we have 
   \begin{align}\label{pallino}
    \int_{B_R}f(x, Dv) \dx & \leq \liminf_{\eta}\liminf_{{\delta}}\liminf_{\varepsilon} \int_{B_R} \left( f^{{\delta}}(x, Du^\eta_{\delta, \varepsilon})+ \varepsilon(1+ |Du^\eta_{{\delta}, \varepsilon}|^2)^{\frac{p_n}{2}}  \right) \dx \notag \\
     & \leq \liminf_{\eta}\liminf_{{\delta}}\liminf_{\varepsilon} \int_{B_R} \left( f^{{\delta}}(x, Du^\eta)+ \varepsilon(1+ |Du^\eta|^2)^{\frac{p_n}{2}} \right)\dx \notag \\
      & = \liminf_{\eta}\liminf_{{\delta}} \int_{B_R} f^{{\delta}}(x, Du^\eta) \dx , 
      \end{align} 
      where we used the minimality of $u^\eta_{{\delta}, \varepsilon}$ and \eqref{fkappalim}.
      By \eqref{pallino},  \eqref{fktoF} and \eqref{IntfDuni}, we derive
      \begin{align}
    \int_{B_R}f(x, Dv) \dx  \leq \liminf_{\eta} \int_{B_R} f(x, Du^\eta) \dx  \leq  \int_{B_R} f(x, Du) \dx,
\end{align}
and then, by minimality of  $u$,  we infer
\begin{equation*}
     \int_{B_R} f(x, Dv) \dx =\int_{B_R} f(x, Du) \dx.
\end{equation*}
By the strict convexity of $\xi \to f(x, \xi)$, we deduce that $u=v$. Then $u \in W^{1, \textbf{p}+2}$ and so we can argue as in the proof of Theorem \ref{AppThm}, thus obtaining 
\begin{align}
       \sum_{i=1}^{n}  { \left( \int_{B_{\rho}} |u_{x_i x_j}|^2 \left( \mu^2+ |u_{x_i}|^2 \right)^\frac{p_i-2}{2} \dx \right)} & \leq c  \left( \sum_{i=1}^{n} \Vert u_{x_i} \Vert_{L^{p_i}(B_R)} + \Vert g \Vert_{L^r (B_{R})} \right)^\sigma ,
\end{align}
and this conclude the proof.

\bigskip
\bigskip

\par\noindent {\bf Data availability statement.} Data sharing not applicable to this article as no datasets were generated or analysed during the current study.

\smallskip
\par\noindent
{\bf Funding}. This research was partly funded by:
\\ (i) GNAMPA   of the Italian INdAM - National Institute of High Mathematics (grant number not available)  (A.Passarelli di Napoli, S. Russo);
\\ (ii)  GNAMPA Project 2026, grant number  E53C25002010001, ``Esistenza e regolarità per soluzioni di equazioni ellittiche e paraboliche anisotrope'' ( A.Passarelli di Napoli, S. Russo);
\\(iii)   Centro Nazionale per la Mobilità Sostenibile (CN00000023) - Spoke 10 Logistica Merci, grant number E63C22000930007,  funded by PNRR (A.Passarelli di Napoli, S. Russo);
\\(iv)Spanish State Research Agency, through the Severo Ochoa and María de Maeztu Program for Centers and Units of Excellence in R$\&$D, grant number (CEX2020001084M), (A.Clop);
\\(v) Government of Spain and the  Catalan Research Agency (Government of Catalonia)-Project PID2024-156429, (A.Clop).

\bigskip
\par\noindent
{\bf
Acknowledgment.} Part of this project was carried out while S. Russo was a visiting researcher at the University of Barcelona (UB). She warmly thanks the institution for its hospitality and Professor Clop for his support. 

\bigskip
\par\noindent
{\bf Conflict of Interest}. The authors declare that they have no conflict of interest.

\Addresses
\end{document}